\DeclareMathOperator{\sech}{sech}
 \newcommand{\RE}{\mathrm{Re}\,}
 \DeclareMathOperator{\sn}{sn}
 \DeclareMathOperator{\diag}{diag}
\newtheorem{theorem}{Theorem}[section]
 \newtheorem{lemma}[theorem]{Lemma}
 \newtheorem{proposition}[theorem]{Proposition}
 \theoremstyle{definition}
 \newtheorem{definition}[theorem]{Definition}
 \theoremstyle{remark}
 \newtheorem{remark}[theorem]{Remark}
 \numberwithin{equation}{section}
\newcommand{\R}{\mathbb R}
\newcommand{\N}{\mathbb N}
\newcommand{\C}{\mathbb C}
\newcommand{\cG}{\mathcal{G}}
\newcommand{\cE}{\mathcal{E}}
\newcommand{\cF}{\mathcal{F}}
\newcommand{\cA}{\mathcal{A}}
\newcommand{\cS}{\mathcal{S}}
\newcommand{\cn}{\mathrm{cn}}
\newcommand{\dn}{\mathrm{dn}}
\newcommand{\sac}{\sigma_\mathrm{\tiny{ac}}}
\newcommand{\sess}{\sigma_\mathrm{\tiny{ess}}}
\newcommand{\ptsp}{\sigma_\mathrm{\tiny{pt}}}
\thanks{Corresponding author.}}
\begin{document}
\maketitle

\centerline{$^1$ Department of Mathematics,
IME-USP}
 \centerline{Rua do Mat\~ao 1010, Cidade Universit\'aria, CEP 05508-090,
 S\~ao Paulo, SP (Brazil)}
 \centerline{\tt angulo@ime.usp.br}
 \centerline{ $^2$ Instituto de Investigaciones en Matem\'aticas Aplicadas
   y en Sistemas,}
 \centerline{Universidad Nacional Aut\'{o}noma de M\'{e}xico,  Circuito Escolar s/n,}
 \centerline{Ciudad Universitaria, C.P. 04510 Cd. de M\'{e}xico (Mexico)}
 \centerline{\tt  plaza@aries.iimas.unam.mx}

\begin{abstract}
This work studies the dynamics of solutions to the sine-Gordon equation posed on a tadpole graph, namely, 
a graph consisting of a  circle with a half-line attached at a single vertex,  and endowed with boundary conditions at the vertex of $\delta$-type. The latter  generalize conditions of Neumann-Kirchhoff type. The purpose of this analysis is to establish the existence and instability  of stationary solutions which we have  called \emph{single-lobe kink state profiles}, which consist of a symmetric stationary solution of subluminal-type profile in the finite ring of the tadpole, coupled with a decaying kink-profile at the infinite edge of the graph. It is proved that such stationary profile solutions are linearly (and nonlinearly) unstable under specific restriction and by  the flow of the sine-Gordon model on the graph. The extension theory of symmetric operators, Sturm-Liouville oscillation results and a splitting eigenvalue method are fundamental ingredients in the stability analysis. The local well-posedness of the sine-Gordon model in an appropriate energy space is also established. The theory developed in this investigation constitutes the first steps  by understading the dynamics to the sine-Gordon equation on a tadpole graph.
\end{abstract}

\textbf{Mathematics  Subject  Classification (2020).} Primary 35Q51, 35J61, 35B35; Secondary 47E05.\\

\textbf{Key  words.} sine-Gordon model,  tadpole graph,  single-lobe kink solutions, $\delta$-type interaction,  extension theory of symmetric operators, Sturm-Liouville theory, instability.


\section{Introduction}


%
%

Recently, nonlinear models posed on metric graphs, such as the nonlinear Schrödinger equation, the sine-Gordon model, and the Korteweg-de Vries equation, have garnered considerable attention (for an abridged list of related works the reader is referred to \cite{AdaNoj14, AngGol18a, AngGol18b, AC, AC1, AP1, AP2, AP3, AST15, KP, KPG, KMPX, KNP22, Noj14}). Real systems can exhibit strong inhomogeneities due to varying nonlinear coefficients across different regions of the spatial domain or due to the specific geometry of the domain itself. Addressing these challenges is difficult due to the complexity of both the equations of motion and the graph topology. A particular issue complicating the analysis is the behavior at the vertices, where a soliton profile traveling along one edge may experience complex phenomena like reflection and radiation emission. This makes it hard to track how energy propagates through the network. Consequently, studying soliton propagation in networks presents significant obstacles. The mechanisms governing the existence and stability (orbital or spectral) of soliton profiles remain uncertain for many types of graphs and models. This project intends to contribute with the exploration of these topics in the particular case of the  sine-Gordon equation,
\begin{equation}
\label{sG0}
\partial_t ^2 \textbf{U}-  \textbf{c}^2\Delta \textbf{U}+ \sin \textbf{U} = 0.
\end{equation}
To explore key features of the sine-Gordon model on metric graphs we therefore choose a relatively simple metric graph domain known as the \emph{tadpole graph} (see Figure 1), a study that has not been addressed in the literature, as far as we know.

We recall that a metric graph $\mathcal{G}$ is a structure represented by a finite number of vertices $V=\{\nu_i\}$ and a set of adjacent edges at the vertices $E=\{e_j\}$ (for further details, see \cite{BK}). Each edge $e_j$ can be identified with a finite or infinite interval of the real line, $I_e$. The notation $e \in E$ will be used to indicate that $e$ is an edge of $\mathcal{G}$. This identification introduces the coordinate $x_e$ along the edge $e$. A quantum graph is a metric graph $\mathcal{G}$ equipped with a differential (or pseudo-differential) operator acting on functions on the graph. A tadpole graph is a graph composed of a circle and a  infinite half-line attached to a common vertex $\nu = L$. If the circle is identified with the interval $[-L, L]$ and the half-line with $[L, \infty)$, we obtain a particular metric graph structure that we will denote again by $\mathcal{G}$, represented by the edge-set $E = \{[-L, L], [L, \infty)\}$ and the unique vertex $V = \{L\}$ (see Figure \ref{tadp}). We use the subscripts $1$ and $2$ to refer to the edges $e_1 = [-L, L]$ and $e_2 = [L, \infty)$.
\begin{figure}[h]
 	\centering
\includegraphics[angle=0,scale=0.5]{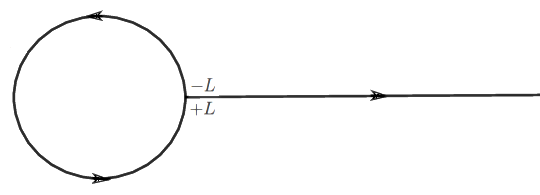}
	\caption{\small{Tadpole graph $\cG$ with vertex at $\nu = L$.} \label{tadp}}
\end{figure}

A wave function $\textbf{U}$ defined on the tadpole graph $\mathcal{G}$ will be understood as an pair of functions $\textbf{U} = (u_1, u_2)$, where $u_1$ is defined on $e_1$ and $u_2$ on $e_2 $.  Thus, we define the action of the Laplacian operator $\Delta$ on the tadpole graph $\mathcal{G}$   by
\begin{equation}
\label{Laplaciantadpole}
-\Delta \textbf{U} = ( -u''_1, -u''_2).
\end{equation}
Thus, from \eqref{sG0}, for $\mathbf{U}(t)=(u_1(\cdot, t), u_2(\cdot, t))$ and   the nonlinearity $ \sin \mathbf{U}$ acting componentwise, i.e.,
$ \sin \bold U(t) = (\sin u_1(\cdot, t), \sin u_2(\cdot, t))$, we obtain that for $\textbf{c}^2=(c_1^2, c_2^ 2)$ the model in \eqref{sG0} can be written in  the  following vectorial form on a tadpole graph $\mathcal{G}$,
\begin{equation}\label{sG}
\partial_t ^2u_j- c_j^2 \partial_x ^2 u_j+ \sin u_j =0,\qquad x\in e_j,\quad c_j>0, \quad j= 1, 2.
\end{equation}

Posing the sine-Gordon equation  on a metric graph comes out naturally from practical applications. For example, since the phase-difference in a long (infinite) Josephson junction obeys the classical sine-Gordon model on the line (cf. \cite{BEMS,Jsph65,SvG05}), the coupling of three  or more Josephson junctions forming a network can be effectively modeled by the vectorial sine-Gordon model in \eqref{sG0} on a graph. The sine-Gordon equation was first conceived on a $\mathcal{Y}$-shaped Josephson junction by Nakajima \emph{et al.} \cite{NakO76,NakO78} as a prototype for logic circuits. Recently, Angulo and Plaza \cite{AP1, AP2, AP3} studied the sine-Gordon model on $\mathcal{Y}$-junctions  by obtaining an  comprehensive study of  the linear (and nonlinear) instability  of static, kink and kink/anti-kink soliton profile solutions.

The goal of this paper is to shed new light on the study of the dynamics of the  sine-Gordon vectorial model in \eqref{sG0} in the case of a tadpole graph. Hence, we will consider the sine-Gordon model \eqref{sG} as a first order system on the bounded interval and on the half-line, respectively,
\begin{equation}
\label{sg2}
\begin{cases}
\partial_t u_j = v_j\\
\partial_t v_j =c_j^2 \partial_x^2 u_j - \sin u_j,
\end{cases}
\qquad x \in e_j, \;\,  t > 0, \;\, 1 \leqq j \leqq 2
\end{equation}
with $u_j=u_j(x,t)$,  $v_j=v_j(x,t)$ and $u_1, v_1:  [-L,L]\to \mathbb R$,  $u_2, v_2:  [L,\infty) \to \mathbb R$. Here, we are interested in the dynamics generated by the flow of the sine-Gordon  on the tadpole around solutions of static or stationary type,
\begin{equation}\label{trav20}
u_j (x,t) = \phi_j(x), \qquad v_j(x,t) = 0,
\end{equation}
for all $j = 1,2$, and $x \in e_j$, $t > 0$, where each of the profile functions $\phi_j$ satisfies the classical kink-equation
\begin{equation}
\label{trav21}
-c_j^2 \phi''_j + \sin \phi_j=0, 
\end{equation}
on each edge $e_j$ and for all $j$, as well as the boundary conditions at the vertex $\nu = L$ being the so-called  $\delta$-coupling or generalized Neumann-Kirchhoff boundary conditions, namely, $(\phi_1, \phi_2)\in  D_Z$
\begin{equation}
\label{Domain0}
D_Z =\{(f,g)\in H^2(-L,L)\times H^2(L, \infty): f(L)=f(-L)=g(L), \, f'(L)-f'(-L)=g'(L)+Z g(L)\},
\end{equation}   
  where  for any $n\geqq 0$,
 $$
 H^n(\mathcal G)=H^ n(-L, L)\oplus  H^ n( L, \infty).
 $$
The parameter $Z\in \mathbb R$ is a coupling constant between the disconnected loop and the half-line. The values of $f$, $g$ and their derivatives at the vertex (and at $-L$) are understood as the appropriate one-sided limits (observe that, as  $(f,g)\in H^2(-L,L)\times H^2(L, \infty)$,  $f'(\pm L)$ and $g'(L+)$ are well defined). We note that $(-\Delta,   D_Z)_{Z\in \mathbb R}$ represents a one-parameter family of self-adjoint operators on a tadpole graph via the extension theory of symmetric operators by Krein and von Neumann (see Theorem A.6 in \cite{Angtad}). 

The delicate point about the existence  of  stationary solutions solving \eqref{trav21} is given by the component $\phi_1$ on $[-L,L]$. The profile $\phi_2$ on $[L,\infty)$ is given by  the well-known kink-soliton profile  on the full real line modulo translation (see \cite{Dra83,SCM}), 
\begin{equation}
\label{trav22}
\phi_2(x)\equiv \phi_{2,a}(x) = 4 \arctan \big[e^{-\frac{1}{c_2}(x-L+a)}\big], \quad\;\; x \in [L, \infty),
\end{equation}
where  we are interested for the case $c_2>0$ ($\lim_{x\to +\infty} \phi_2(x)=0$) and the shift parameter $a\in \mathbb R$ will be determined by the condition $(\phi_1, \phi_2)\in D_Z$ (see Figure \ref{fig2}).
 
%
\begin{figure}[h]
 \centering
\includegraphics[angle=0,scale=0.55]{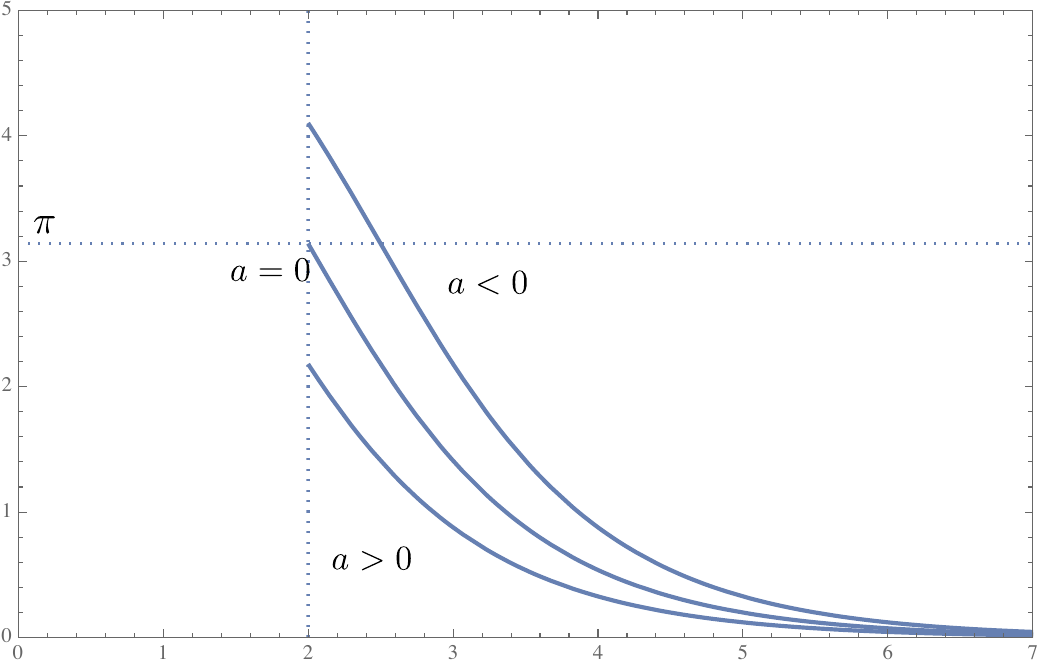}
\caption{\small{Graph of the kink-soliton profile (in solid blue line) for $x \in [L, \infty)$ for different values of $a$. Here $L =2$ and $c_2 = 1$ (color online).} \label{fig2}} 
\end{figure}

Among all stationary solutions for \eqref{trav21}, specifically on $[-L, L]$, we are particularly interested in the so-called \emph{single-lobe kink states}, an example of which is shown on Figure \ref{fig3}. More precisely, we have the following definition.

\begin{figure}[h]
 \centering
\includegraphics[angle=0,scale=0.55]{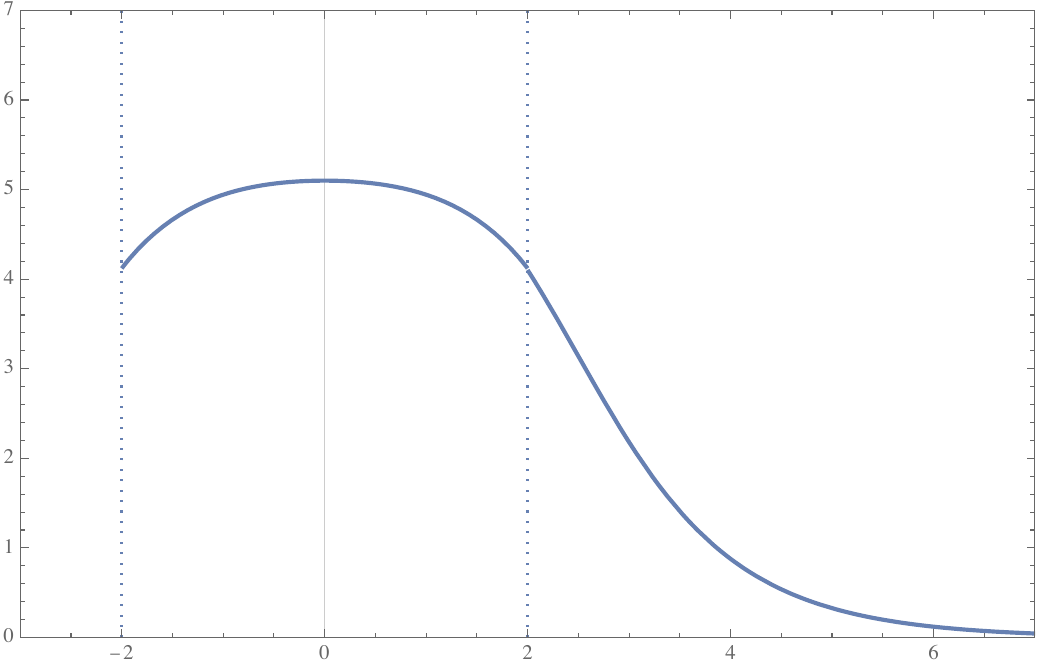}
\caption{\small{An example of a single-lobe kink state profile for the sine-Gordon model according to Definition \ref{singlelobe}.} \label{fig3}} 
\end{figure}
 

\begin{definition}
\label{singlelobe}
The stationary profile solution $\Theta = (\phi_1, \phi_2) \in D_Z$ is said to be a \emph{single-lobe kink state} for \eqref{trav21} if every component is positive on each edge of the tadpole graph $\cG$, the maximum of $\phi_1$ is achieved at a single internal point, symmetric on $[-L,L]$ and monotonically decreasing on $[0,L]$. Moreover, $\phi_2$ is strictly decreasing on $[L,\infty)$.
\end{definition}

 In  Figure \ref{anti} (see formula \eqref{profile}), we show other profile type for $\phi_1$ solution of \eqref{trav21} on $[-L, L]$. 
 
The existence and dynamics  of positive single-lobe kink states  for the sine-Gordon model  on a tadpole graph have not been addressed  in the literature and, consequently, one of the purposes of this article is to take the first steps towards a better understanding of the dynamics of these profiles by the flow of the sine-Gordon model. In particular, in Proposition \ref{Z2} below, we show  that there are  single-lobe kink states $(\phi_1,\phi_2)\in D_Z$ provided that $Z\in (-\infty, \frac{2}{\pi c_2})$.

For the convenience of the reader we give a brief description of our main results. Initially, in Propositions \ref{1profile} and \ref{2profile} below, we establish the profiles of a two-families of single-lobe states for \eqref{trav21} on $[-L,L]$ depending on  an {\it a priori} sign of the kink-shift, namely, $a<0$ or $a>0$ (we note that no necessarily all these profiles will be  the first component for a  single-lobe kink state $(\phi_1,\phi_2)\in D_Z$, such as is established in  Propositions \ref{exis1} and \ref{exis3} below).  In Proposition \ref{1profile} (for {\it a priori} $a<0$),  the following family of solutions  $ \phi_{1,k}$ for equation in  \eqref{trav21} will be constructed based on the subluminal periodic type solution for the sine-Gordon (see Figure \ref{fig4}) with a single-lobe profile on $[-L, L]$  given by,
 \begin{equation}\label{formula1a}
\phi_{1,k}(x)= 2\pi - \arccos\Big[-1+ 2k^2 \sn^2\Big(\frac{x}{c_1} +K(k); k\Big)\Big],\quad x\in [-L,L],  
\end{equation}
 where $\sn(\cdot; k)$ is the Jacobian elliptic function of snoidal-type and  the elliptic modulus $k$ satisfying $K(k)>\frac{L}{c_1}$, where $K$ is the Legendre's complete elliptic integral of the first type (see \eqref{K} and Byrd and Friedman \cite{ByFr71}).
 More precisely, for $\frac{L}{c_1}\leqq \frac{\pi}{2}$, we have the curve of solutions $k\in (0,1)\to \phi_{1,k}$, and for $\frac{L}{c_1}> \frac{\pi}{2}$,  the curve $k\in (k_0,1) \to \phi_{1,k}$, with  $K(k_0)=\frac{L}{c_1}$. The profile $\phi_{1,k}$ satisfies $\phi''_{1,k}(x)<0$ for all $x\in [-L,L]$ (see Figure \ref{fig6a}). In this way, in Proposition \ref{exis1} below we give a complete restrictions on the parameters $L, c_1, c_2, k$ and $Z$ for which there are (or there are not) single-lobe kink states $(\phi_{1,k}, \phi_{2,a(k)})\in D_Z$ with a first-component being $\phi_{1,k}$ in \eqref{formula1a}. Similarly, for $a>0$, in Proposition \ref{2profile} we establish also a family of solutions  $\phi_{1,k}$ for equation in  \eqref{trav21} (see \eqref{La2}) but they are not strictly concave (see Figure \ref{fig7}). in Proposition \ref{exis3},  we also give sufficient conditions on  $L, c_1, c_2, k$ and $Z\leqq 0$ for the subluminal profile in \eqref{La2} to be the first-component of a single-lobe kink state. In Proposition  \ref{3profile} , we establish the existence of degenerated single-lobe kink state $(\pi, \phi_{2,0})\in D_Z$ if and only if $Z=\frac{2}{\pi c_2}$ (see Figure \ref{fig8}).

\begin{figure}[h]
\centering
\includegraphics[angle=0,scale=0.55]{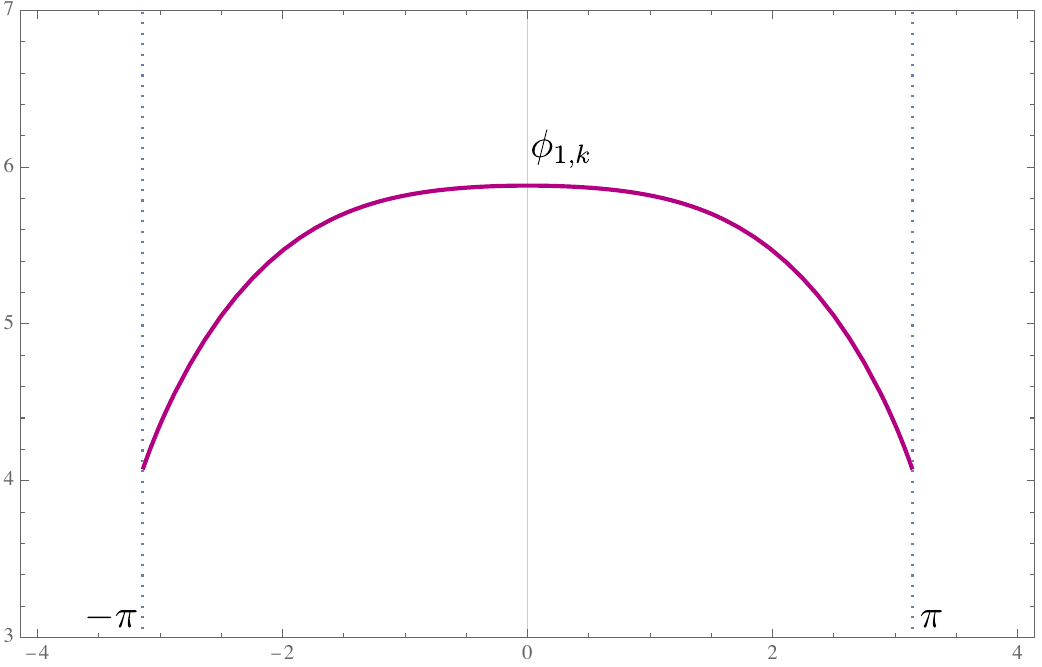}
\caption{\small{Single-lobe solution  $\phi_{1, k}$ in \eqref{formula1a} with $L=\pi$, $c_1=1$ and $k^2 = 0.98$.} \label{fig6a}} 
\end{figure}

The second goal of the present analysis is associated with the dynamics of the sine-Gordon flow around the  single-lobe kink states obtained in Propositions  \ref{3profile}, \ref{exis1} and  \ref{exis3}; more specifically, we are interested on its stability properties. The stability of these static configurations is an important property from both the mathematical and the physical points of view. Stability can predict whether a particular state can be observed in experiments or not. Unstable configurations are rapidly dominated by dispersion, drift, or by other interactions depending on the dynamics, and they are practically undetectable in applications. 

In the forthcoming stability analysis, for instance, by considering  the single-lobe kink states $(\phi_1, \phi_2)\equiv (\phi_{1,k}, \phi_{2,a(k)})\in D_Z$ determined by Proposition \ref{exis1} (with $\phi_{1,k}$ in \eqref{formula1a}), the following  family of linearized operators around of $(\phi_1, \phi_2)$ plays a fundamental role,
\begin{equation}\label{trav23}
\mathcal{L}_Z \bold{v}=\Big (\Big(-c_j^2\frac{d^2}{dx^2}v_j + \cos (\phi_j)v_j
\Big)\delta_{j,k} \Big ),\quad \l1\leqq j, k\leqq 2,\;\;\bold{v}= (v_j)_{j=1}^2,\;\; c_j>0,
\end{equation}
where $\delta_{j,k}$ denotes the Kronecker symbol, and defined on domains with $\delta$-type interaction at the vertex $\nu = L$, $D(\mathcal{L}_Z)=D_Z$ in \eqref{Domain0}. An interesting characteristic of the spectrum structure associated with operators in \eqref{trav23} on a tadpole graph is that they have a non-trivial Morse index (larger than or equal to 1, in general) which makes the stability study not so immediate. Here we use a novel linear instability criterion for stationary solutions of evolution models on metric graphs (see Theorem \ref{crit} below)  developed by Angulo and Cavalcante \cite{AC1}  and adapted to the case of the sine-Gordon model on tadpole graphs, as well as a {\it {splitting eigenvalue method}} introduced by Angulo \cite{Angtad} (see also Lemma \ref{split} below and Angulo \cite{Angloop}). This method is applied to the operator $\mathcal{L}_Z \equiv \diag (\mathcal{L}_1, \mathcal{L}_a)$ on $D_Z$. More precisely, we reduce the eigenvalue problem associated to $\mathcal{L}_Z$ in \eqref{trav23}  into two classes of eigenvalue problems, one for  $\mathcal{L}_1$ with periodic boundary conditions on $[-L, L]$ and the second one for $\mathcal{L}_a$ with $\delta$-type boundary condition on the half-line $[L, \infty)$. Hence, under the restrictions on $L, c_1, c_2, Z$ and $k$ given in Theorems \ref{exemplos}, \ref{ker} and \ref{MK}, we show that $\ker(\mathcal{L}_Z )=\{\bold{0}\}$ and that the Morse index of $\mathcal{L}_Z$ is equal to one. Therefore, as a  consequence of the linear instability criterion in Theorem \ref{crit} we deduce that  these single-lobe kink states are linearly unstable by the sine-Gordon flow on a tadpole graph (see our main instability result in Theorem \ref{unstable} below).

Our last main result is that the linear instability of the  single-lobe kink states  implies also its nonlinear instability (Theorem \ref{unstable} below). The strategy to prove this connection  is to use the fact that the mapping data-solution associated to the sine-Gordon model is of class $C^2$ on the energy space $\mathcal E(\mathcal G)\times L^2(\mathcal G)$, with
 \begin{equation}\label{senergy}
 \mathcal E(\mathcal G)=\{(f,g)\in H^1(\mathcal G): f(-L)=f(L)=g(L)\},
\end{equation}
$ \mathcal E(\mathcal G)$ will be also called the {\it closed continuous subspace at the vertex $\nu=L$ of $H^1(\mathcal G)$}. 
In this way, to carry out this strategy we  establish a complete theory on the local well-posedness problem for the sine-Gordon model on $\mathcal E(\mathcal G)\times L^2(\mathcal G)$ in Section \ref{seccauchy}.

For completeness and for the convenience of the reader, in the Appendix  we formulate some tools from the extension theory of symmetric operators by Krein and von Neumann which are suitable for our needs. In particular, we establish a Perron-Frobenius result which is used in the accurate Morse index estimate $n(\mathcal L_Z)\leqq 1$ for $\mathcal L_Z$ in \eqref{trav23} and $(\phi_1, \phi_2)$  being {\it a priori}  single-lobe kink state (see Theorem \ref{morse1}).

\subsection*{Notation}

Let $A$ be a  closed densely defined symmetric operator in a Hilbert space $H$. The domain of $A$ is denoted by $D(A)$. The deficiency indices of $A$ are denoted by  $n_\pm(A):=\dim  \ker (A^*\mp iI)$, with $A^*$ denoting the adjoint operator of $A$.  The number of negative eigenvalues counting multiplicities (or Morse index) of $A$ is denoted by  $n(A)$. For any $-\infty\leq a<b\leq\infty$, we denote by $L^2(a,b)$ the Hilbert space equipped with the inner product $(u,v)=\int_a^b u(x)\overline{v(x)}dx$. By $H^m(a,b)$  we denote the classical  Sobolev spaces on $(a,b)\subseteq \mathbb R$ with the usual norm.   We denote by $\mathcal{G}$ the tadpole graph parametrized by the edges $e_1 = [-L, L]$, $e_2 = [L,\infty)$,  attached to a common vertex $\nu=L$. On the graph $\mathcal{G}$ we define the classical spaces $L^p(\mathcal{G})=L^p(-L, L)  \oplus L^p(L, \infty)$, $p>1$, and 
  \begin{equation*}  
 \quad H^m(\mathcal{G})=H^m(-L, L) \oplus H^m(L, \infty), \quad m\in \mathbb N,
 \end{equation*}   
with the natural norms. Also, for $\mathbf{u}= (u_j)_{j=1}^2$, $\mathbf{v}= (v_j)_{j=1}^2 \in L^2(\mathcal{G})$, the inner product is defined by
$$
\langle \mathbf{u}, \mathbf{v}\rangle= \int_{-L}^L u_1(x) \overline{v_1(x)} \, dx  +  \int_L^{\infty}  u_2(x) \overline{v_2(x)} \, dx.
$$
Depending on the context we will use the following notations for different objects. By $\|\cdot \|$ we denote  the norm in $L^2(\mathbb{R})$ or in $L^2(\mathcal{G})$. By $\| \cdot\| _p$ we denote  the norm in $L^p(\mathbb{R})$ or in $L^p(\mathcal{G})$.

\vskip0.2in

 \section{Well-posedness for the sine-Gordon on a tadpole graph}
 \label{seccauchy}
  
  In this section we show the local well-posedness for the sine-Gordon model \eqref{sg2} on a  tadpole graph $\cG$ in the energy-space $\mathcal E(\cG)\times L^2(\cG)$, with
 \begin{equation}\label{senergy}
 \mathcal{E}(\cG)=\{(f,g)\in H^1(\cG): f(-L)=f(L)=g(L)\}.
\end{equation}
The space $\mathcal E(\mathcal G)$ represents the {\it closed continuous subspace at the vertex $\nu=L$ of $H^1(\mathcal G)$}.

 Next,  if we denote a  wave function $\bold U$ on   the tadpole graph $\mathcal G$  by $\bold{W}=(\boldsymbol{u}, \boldsymbol{v})^\top$  with $\boldsymbol{u}=(u_1, u_2)^\top$,  $\boldsymbol{v}=(v_1, v_2)^\top$, we get the following matrix-form for \eqref{sg2},
\begin{equation}
\label{sg3}
\bold {W}_t=JE \bold {W}+ F(\bold {W})
\end{equation}
with
\begin{equation}
\label{sg4}
J=\left(\begin{array}{cc}0 & I_2 \\-I_2 & 0\end{array}\right), \; E=\left(\begin{array}{cc}\mathcal F & 0  \\0 & I_2\end{array}\right),\; F(\bold {W})=\left(\begin{array}{c}0 \\0 \\- \sin u_1 \\- \sin u_2 \end{array}\right),
\end{equation}
$I_2$ is the identity matrix of order 2 and $\mathcal F$ is the diagonal matrix,
\begin{equation}
\label{sg5}
\mathcal F=\Big (\Big(-c_j^2\frac{d^2}{dx^2}\Big )\delta_{j,k}\Big),\quad 1\leqq j,k\leqq 2.
\end{equation}

The main result of this section can be stated as follows.
\begin{theorem}
\label{IVP}   
The Cauchy problem associated to the sine-Gordon model \eqref{sg3} on a tadpole graph $\cG$ is locally well-posed on  the energy space $\mathcal{E}(\cG)\times L^2(\cG)$. More precisely, for any $\boldsymbol{\Psi} \in \cE(\cG)\times L^2(\cG)$ there exists $T > 0$ such that the sine-Gordon system \eqref{sg3} has a unique solution,
\[
\boldsymbol{W} \in C([0,T]; \cE(\cG)\times L^2(\cG)),
\]  
with initial condition $\boldsymbol{W}(0) = \boldsymbol{\Psi}$. Furthermore, for every $T_0 \in (0,T)$ the data-solution map,
\[
\boldsymbol{\Psi} \in \cE(\cG)\times L^2(\cG) \mapsto \boldsymbol{W} \in C([0,T_0]; \cE(\cG)\times L^2(\cG))
\]
is, at least, of class $C^2$. 
\end{theorem}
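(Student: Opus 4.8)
\emph{Proof strategy.} The plan is to treat \eqref{sg3} as an abstract semilinear Cauchy problem on the Hilbert space $\mathcal H:=\cE(\cG)\times L^2(\cG)$,
\[
\mathbf W_t=\mathcal A\mathbf W+F(\mathbf W),\qquad \mathbf W(0)=\boldsymbol\Psi,
\]
with $\mathcal A:=JE$ and $F$ as in \eqref{sg4}, and then to run the classical semigroup-plus-contraction scheme. The two structural inputs are: (i) that $\mathcal A$ generates a $C_0$-group on $\mathcal H$; and (ii) that $F$ is a globally Lipschitz map of class $C^\infty$ (in particular $C^2$) from $\mathcal H$ into itself. Once these are in place, local (and in fact global) well-posedness together with the $C^2$ dependence will follow from standard arguments.

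\emph{Step 1: the group.} I would take the generator domain to be $D(\mathcal A)=\{(\mathbf u,\mathbf v)\in\mathcal H:\mathbf u\in D_Z,\ \mathbf v\in\cE(\cG)\}$, with $D_Z$ the $\delta$-coupling data of \eqref{Domain0}; on it $\mathcal A(\mathbf u,\mathbf v)=(\mathbf v,(c_j^2u_j'')_{j=1}^2)$, so that \eqref{sg3} reproduces \eqref{sg2}. To see that $\mathcal A$ generates a group, for $m>0$ large I introduce on $\mathcal H$ the energy inner product obtained by adding $m\|\mathbf u\|^2$ to the sum of the quadratic form $q(\mathbf u)=\sum_{j}c_j^2\int_{E_j}|u_j'|^2\,dx+Z|u(L)|^2$ attached to the $\delta$-interaction and of $\|\mathbf v\|^2$. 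Since the vertex term $Z|u(L)|^2$ depends on $\mathbf u$ only through its value at the single point $\nu=L$ and $H^1(\cG)\hookrightarrow C(\cG)$, it is infinitesimally form-bounded relative to $\|\mathbf u\|_{H^1(\cG)}^2$, so for $m$ large this form is positive and induces a norm equivalent to that of $\mathcal H$. An integration by parts --- exactly the computation behind the self-adjointness of $(-\Delta,D_Z)$ in Theorem \ref{graph3}, in which the boundary contributions at $\nu=L$ and at $-L$ cancel thanks to the $\delta$-relation in \eqref{Domain0} together with the continuity of $\mathbf v$ at the vertex --- shows that $\mathcal A$ differs by a bounded operator from one that is skew-adjoint for this inner product. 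Stone's theorem and the bounded perturbation theorem then yield a $C_0$-group $\{e^{t\mathcal A}\}_{t\in\mathbb R}$ on $\mathcal H$ with $\|e^{t\mathcal A}\|_{\mathcal B(\mathcal H)}\leqq Ce^{\omega|t|}$; alternatively this can be imported from the $\mathcal Y$-junction constructions of \cite{AP1,AP2,AP3} after adapting the vertex conditions to the tadpole. This is the only step that needs real care: pinning down the generator domain so that \eqref{Domain0} is built in, and absorbing the (possibly indefinite, when $Z<0$) vertex term via the shift $m$ so as to realize $\mathcal A$ as a skew-adjoint operator, up to a bounded term, for a norm equivalent to that of $\mathcal H$.

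\emph{Step 2: the nonlinearity.} The first two components of $F(\mathbf W)$ vanish, hence belong to $\cE(\cG)$; for the last two, the pointwise bound $|\sin t|\leqq|t|$ gives $\|\sin u_j\|_{L^2(E_j)}\leqq\|u_j\|_{L^2(E_j)}$, so $F$ maps $\mathcal H$ into $\mathcal H$ --- this is the one place where the behaviour of $\sin u_2$ on the unbounded edge $[L,\infty)$ must be controlled, and the bound does it. The same estimate applied to $\sin a-\sin b$ shows that $F$ is globally Lipschitz, $\|F(\mathbf W)-F(\widetilde{\mathbf W})\|_{\mathcal H}\leqq\|\mathbf u-\widetilde{\mathbf u}\|_{L^2(\cG)}\leqq\|\mathbf W-\widetilde{\mathbf W}\|_{\mathcal H}$. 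For the $C^2$ claim one computes the Gateaux derivatives $DF(\mathbf W)\mathbf h=(0,0,-\cos(u_1)h_1,-\cos(u_2)h_2)$ and $D^2F(\mathbf W)[\mathbf h,\mathbf g]=(0,0,\sin(u_1)h_1g_1,\sin(u_2)h_2g_2)$ and bounds them on $\mathcal H$ using $|\cos|,|\sin|\leqq 1$, the Lipschitz bounds for $\cos$ and $\sin$, and the embeddings $H^1(E_j)\hookrightarrow L^\infty(E_j)$; this shows $\mathbf W\mapsto DF(\mathbf W)$ and $\mathbf W\mapsto D^2F(\mathbf W)$ are continuous in operator norm, which upgrades Gateaux to Fréchet differentiability and gives $F\in C^2(\mathcal H;\mathcal H)$ (indeed $C^\infty$).

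\emph{Step 3: fixed point and smooth dependence.} With Steps 1--2 in hand, the contraction mapping principle applied to the Duhamel map $\Gamma(\mathbf W)(t):=e^{t\mathcal A}\boldsymbol\Psi+\int_0^t e^{(t-s)\mathcal A}F(\mathbf W(s))\,ds$ on $C([0,T];\mathcal H)$, for $T>0$ small, produces a unique mild solution $\mathbf W\in C([0,T];\mathcal H)$ with $\mathbf W(0)=\boldsymbol\Psi$; uniqueness on $[0,T]$ and continuous dependence on the data are standard, and the global Lipschitz bound would in fact allow $T$ to be taken arbitrary. Finally, the $C^2$ regularity of $\boldsymbol\Psi\mapsto\mathbf W\in C([0,T_0];\mathcal H)$ will follow by differentiating the fixed-point identity: since $F\in C^2$ and the linear parts of $\Gamma$ are smooth, the uniform-contraction (implicit function) principle gives that $\boldsymbol\Psi\mapsto\mathbf W$ is $C^2$, with $D\mathbf W(\boldsymbol\Psi)\mathbf h$ the solution of the linear non-autonomous problem $\mathbf Z_t=\mathcal A\mathbf Z+DF(\mathbf W(t))\mathbf Z$, $\mathbf Z(0)=\mathbf h$ --- globally well-posed because $t\mapsto DF(\mathbf W(t))\in\mathcal B(\mathcal H)$ is continuous and uniformly bounded --- and analogously for $D^2\mathbf W$. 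This completes the plan; the only genuinely delicate point, as noted, is the group generation in Step 1, everything else being the standard Segal--Kato machinery made possible by the tameness of $\sin$ on the energy space.
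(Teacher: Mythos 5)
Your proposal is correct and follows essentially the same route as the paper: generate the linear semigroup by realizing $JE$ as a bounded perturbation of a skew-adjoint operator with respect to an equivalent energy norm that absorbs the vertex term, then close with Duhamel's formula, the contraction mapping principle, and the implicit function theorem for the $C^2$ dependence (the bounds $|\sin a-\sin b|\leqq|a-b|$ and the embedding $H^1\hookrightarrow L^\infty$ are exactly what the paper uses for $F$). The only cosmetic difference is that the paper builds the semigroup on all of $H^1(\cG)\times L^2(\cG)$ via Lumer--Phillips in the weighted norm \eqref{1norm} and then proves invariance of $\cE(\cG)\times L^2(\cG)$ (Proposition \ref{preser}), whereas you set up the generator directly on the energy space; both work, and the range condition your Stone-theorem step implicitly needs is supplied by the resolvent formula of Theorem \ref{resolvA}.
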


The proof of Theorem \ref{IVP} follows the script established in Angulo and Plaza  \cite{AP1} (see Section 2)  in the case of the sine-Gordon model on a $\mathcal Y$-junction with a $\delta$-interaction. Thus, we start by describing the spectrum of the one parameter family of self-adjoint operators $(-\Delta, D_Z)$ for $Z \in \R$ defined in \eqref{Laplaciantadpole} and \eqref{Domain0} (for a related result in the context of tadpole graphs see Noja and Pelinovsky \cite{NoPel20}, Proposition A.1). In order to simplify the notation we write $\cF_Z$ to denote the operator $-\Delta$ acting on $L^2(\cG)$ with domain $D_Z$, for each $Z \in \R$.

\begin{theorem}
\label{spectrumDeltaZ}
For every $Z \in \R$ the essential spectrum of the self-adjoint operator $\cF_Z$ is purely absolutely continuous and $\sess(\cF_Z) = \sac(\cF_Z) = [0,\infty)$. If $Z > 0$ then $\cF_Z$ has exactly one negative eigenvalue, i.e., its point spectrum is $\ptsp(\cF_Z) = \{ - \varrho_{_Z}^2 \}$, where $\varrho_{_Z} > 0$ is the only positive root of the trascendental equation
\[
\varrho \left( \frac{2}{c_1} \tanh \Big( \frac{\varrho L}{c_1}\Big) + \frac{1}{c_2} \right) - Z = 0,
\]
and with eigenfunction
\[
\boldsymbol{\Phi}_Z = \begin{pmatrix}
\cosh (\frac{\varrho_{_Z} x}{c_1} ) \\
\\ e^{- \frac{\varrho_{_Z} x}{ c_2}}
\end{pmatrix}.
\]
If $Z \leqq 0$ then $\cF_Z$ has no point spectrum, $\ptsp(\cF_Z) = \varnothing$.
\end{theorem}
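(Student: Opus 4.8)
The plan is to describe $\cF_Z$ on each edge and glue through the vertex conditions in $D_Z$, computing the resolvent or, more directly, solving the eigenvalue equation $\cF_Z \boldsymbol\Phi = \lambda \boldsymbol\Phi$ by hand. First I would handle the essential spectrum: since $\cF_Z$ is a finite rank perturbation (in the resolvent sense) of the decoupled operator $-c_1^2 d^2/dx^2$ on $(-L,L)$ with periodic conditions $\oplus$ $-c_2^2 d^2/dx^2$ on $(L,\infty)$ with a Neumann (or Dirichlet) condition at $L$, Weyl's theorem on stability of essential spectrum under relatively compact perturbations gives $\sess(\cF_Z) = \sess(-c_2^2 d^2/dx^2 \text{ on half-line}) = [0,\infty)$; the compact interval contributes only discrete spectrum. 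Absolute continuity of the essential part on $[0,\infty)$ follows because the half-line Laplacian has purely a.c.\ spectrum there and the added point spectrum (if any) is at most one eigenvalue below zero, so $\sac(\cF_Z)=[0,\infty)$ and there is no singular continuous part; this is the standard limiting absorption / explicit resolvent argument for Sturm-Liouville operators with one regular and one limit-point endpoint.

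Next, for the point spectrum below zero, set $\lambda = -\varrho^2$ with $\varrho>0$. On $(L,\infty)$ the only $L^2$ solution of $-c_2^2 g'' = -\varrho^2 g$ is $g(x) = B e^{-\varrho x/c_2}$ (up to scaling); on $[-L,L]$ the solutions of $-c_1^2 f'' = -\varrho^2 f$ are spanned by $\cosh(\varrho x/c_1)$ and $\sinh(\varrho x/c_1)$. Imposing $f(-L)=f(L)$ forces the $\sinh$ coefficient to vanish by oddness, so $f(x)=A\cosh(\varrho x/c_1)$, and then automatically $f(L)=f(-L)$; matching $f(L)=g(L)$ gives $A\cosh(\varrho L/c_1) = B e^{-\varrho L/c_2}$, which up to normalization yields exactly $\boldsymbol\Phi_Z = (\cosh(\varrho_Z x/c_1),\, e^{-\varrho_Z x/c_2})$ after rescaling $x$ on the half-line (or absorbing constants). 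The remaining Kirchhoff-type condition $f'(L) - f'(-L) = g'(L) + Z g(L)$ becomes, using $f'(\pm L) = \pm A (\varrho/c_1)\sinh(\varrho L/c_1)$ and $g'(L) = -B(\varrho/c_2)e^{-\varrho L/c_2}$ together with $g(L) = A\cosh(\varrho L/c_1)$:
\[
2 A \frac{\varrho}{c_1}\sinh\Big(\frac{\varrho L}{c_1}\Big) = -A \frac{\varrho}{c_2}\cosh\Big(\frac{\varrho L}{c_1}\Big) + Z A \cosh\Big(\frac{\varrho L}{c_1}\Big),
\]
and dividing by $A\cosh(\varrho L/c_1)>0$ gives precisely
\[
\varrho\left( \frac{2}{c_1}\tanh\Big(\frac{\varrho L}{c_1}\Big) + \frac{1}{c_2}\right) - Z = 0.
\]

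Finally I would analyze this transcendental equation. Define $h(\varrho) := \varrho\big( \tfrac{2}{c_1}\tanh(\varrho L/c_1) + \tfrac{1}{c_2}\big)$ for $\varrho > 0$. Then $h(0^+) = 0$, $h$ is strictly increasing (product of positive increasing functions, with $\tanh$ increasing and $\varrho\tanh(\varrho L/c_1)$ increasing), and $h(\varrho)\to\infty$ as $\varrho\to\infty$; hence $h(\varrho)=Z$ has a unique positive root iff $Z > 0$, and no root if $Z \leqq 0$. This gives the claimed dichotomy: exactly one negative eigenvalue $-\varrho_Z^2$ for $Z>0$ with the stated eigenfunction, and $\ptsp(\cF_Z)=\varnothing$ for $Z\leqq 0$ (one should also check $\lambda=0$ is not an eigenvalue: a harmonic $f$ on $(-L,L)$ that is periodic is constant, forcing $g$ constant and hence $g\equiv 0$ by $L^2$, and then $Zf(L)=0$; if $Z\neq 0$ this kills $f$, and if $Z=0$ the would-be eigenfunction is not in $L^2(\cG)$).

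The main obstacle I anticipate is not any single computation but the essential-spectrum bookkeeping: carefully justifying that $\cF_Z$ differs from a decoupled, explicitly diagonalizable operator by something relatively compact (equivalently, that the difference of resolvents is rank one or finite rank), and that no singular continuous spectrum sneaks in — this is where one leans on the one-dimensional Sturm-Liouville structure with a single limit-point end, so that the a.c.\ nature on $[0,\infty)$ is inherited from the half-line model. Everything below zero is then an elementary ODE matching problem as sketched above.
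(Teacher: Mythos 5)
Your treatment of the negative spectrum is essentially the paper's: the same $\cosh$ / decaying-exponential ansatz on the two edges, the same reduction of the vertex conditions to the transcendental equation, and the same monotonicity argument for $h(\varrho)=\varrho\big(\tfrac{2}{c_1}\tanh(\varrho L/c_1)+\tfrac{1}{c_2}\big)$ (the paper writes it as $G(\varrho)=h(\varrho)-Z$ with $G(0)=-Z$ and $G'>0$), so that part is correct and complete, including the check that $\lambda=0$ is not an eigenvalue. Where you diverge is the essential spectrum: the paper computes the Jost functions explicitly, solves a $2\times 2$ system for the scattering coefficients $a(k),b(k)$, and verifies they are bounded and non-vanishing (no spectral singularities), whereas you invoke Weyl's theorem together with the finite-rank resolvent difference coming from extension theory. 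Your route gives $\sess(\cF_Z)=[0,\infty)$ cleanly, but, as you yourself flag, the absence of singular continuous spectrum does not follow from a finite-rank resolvent perturbation alone (rank-one perturbations can in general create embedded singular spectrum), so to close that claim you would still need either the paper's explicit Jost-function computation or a genuine limiting absorption argument; the paper's version is the more self-contained one on this point. The paper also gets the a priori bound $n(\cF_Z)\leqq 1$ from the deficiency indices, which your direct monotonicity analysis renders unnecessary.

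One genuine omission: the paper splits the eigenvalue problem according to whether the half-line component $v$ vanishes identically, and in the case $v\equiv 0$ it exhibits the eigenvalues $\lambda=n^2\pi^2c_1^2/L^2$, $n\in\N$, with eigenfunctions $(\sin(n\pi x/L),0)\in D_Z$ for every $Z$. These are honest $L^2$ eigenvalues of $\cF_Z$ embedded in $[0,\infty)$, so the literal assertion $\ptsp(\cF_Z)=\varnothing$ for $Z\leqq 0$ — which you reproduce by examining only $\lambda\leqq 0$ — cannot be proved as stated; the theorem is really a statement about the spectrum below the essential spectrum. This imprecision is already in the statement itself, but your proof never looks at $\lambda>0$ with $v\equiv 0$ and so silently misses this family, which a complete argument (like the paper's) should at least record.
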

\begin{proof}
First, it is to be observed that from Theorem A.6 in \cite{Angtad} and Proposition \ref{semibounded}, the one parameter family of self-adjoint extensions $(\cF_Z, D_Z)$ satisfies $n(\cF_Z) \leqq 1$ for all $Z \in \R$.

Let us now consider the spectral problem $\cF_Z \mathbf{U} = \lambda \mathbf{U}$ with $\mathbf{U} = (u,v) \in D_Z$. In view of the geometry of the tadpole graph $\cG$ the spectrum of $\cF_Z$ is the union of two sets: the set $\lambda$ for which $v \equiv 0$, and the set of $\lambda$ for which $v \neq 0$. In the first case, we have the point spectral problem
\[
\left\{
\begin{aligned}
-c_1^2 u'' &= \lambda u, \qquad x \in (-L,L)\\
u(L) &= u(-L) = 0,\\
u'(L) &- u'(-L) = 0,
\end{aligned}
\right.
\]
regardless of the value of $Z \in \R$. The eigenvalues of this spectral problem are precisely $\lambda = n^2 \pi^2 c_1^2 / L^2$ with $n \in \N$, and for each $\lambda$ in this set the eigenfuction of $\cF_Z$ is given by
\[
\begin{aligned}
u(x) &= \sin \big( n \pi x /L\big), & x \in (-L,L),\\
v(x) &= 0, & x \in (L,\infty),
\end{aligned}
\]
for every $Z \in \R$.

The second set includes the absolutely continuous spectrum, $\sac(\cF_Z) = [0,\infty)$, and for each $\lambda = k^2 \in [0,\infty)$ the Jost functions of $\cF_Z$ are given by
\[
\begin{aligned}
u(x) &= a(k) \big( e^{ikx/c_1} + e ^{-ikx/c_1}\big), & x \in (-L,L),\\
v(x) &= e^{ikx/c_2} + b(k) e^{-ikx/c_2} , & x \in (L,\infty).
\end{aligned}
\]

The coefficients $a(k), b(k)$ can be obtained from the boundary conditions in \eqref{Domain0}, yielding the equation
\[
\mathbf{M} \begin{pmatrix} 
a(k) \\ b(k)
\end{pmatrix} = \mathbf{g}(k),
\]
where
\[
\mathbf{M} = \begin{pmatrix} 
2 \cos \big( k L / c_1 \big) & - e^{-ikL/c_2} \\
\frac{k}{c_1} \sin \big( k L / c_1 \big) & \big( \frac{ik}{c_2} - Z\big) e^{-ikL/c_2} 
\end{pmatrix}, \qquad \mathbf{g}(k) =  e^{ikL/c_2}\begin{pmatrix} 
1 \\ \big( \frac{ik}{c_2} + Z\big)
\end{pmatrix}.
\]
The determinant of $\mathbf{M}$ is
\[
| \mathbf{M} | = e^{ikL/c_2} \Big[ 2 \Big( \frac{ik}{c_2} - Z\Big) \cos \Big( \frac{kL}{c_1}\Big) +  \frac{k}{c_1} \sin \Big( \frac{kL}{c_1}\Big) \Big].
\]
A straightforward calculation shows that $| \mathbf{M} | = 0$ if and only if $k = 0$ and $Z = 0$ (yielding in that case the constant solutions $u = v= 2a(0)$). Hence for all $k \neq 0$ and all $Z$ the coefficients are given by
\[
\begin{pmatrix} 
a(k) \\ b(k)
\end{pmatrix} = \Big[ 2 \Big( \frac{ik}{c_2} - Z\Big) \cos \Big( \frac{kL}{c_1}\Big) +  \frac{k}{c_1} \sin \Big( \frac{kL}{c_1}\Big) \Big]^{-1} \begin{pmatrix} 
\frac{2ik}{c_2} e^{-ikL/c_2} \\  -\frac{k}{c_1} \sin \Big( \frac{kL}{c_1}\Big) + 2 \Big( \frac{ik}{c_2} + Z\Big) \cos \Big( \frac{kL}{c_1}\Big)
\end{pmatrix}.
\]
Since $a(k)$ and $b(k)$ are bounded and non-zero, there are no spectral singularities in the absolutely continuous spectrum of $\cF_Z$ in $L^2(\cG)$.

Finally, let us examine whether this second set includes isolated negative eigenvalues, $\lambda < 0$, with $v \neq 0$. In such a case, since $v \in H^2([L,\infty))$ implies $v \to 0$ as $x \to \infty$, we consider an eigenfunction of the form
\[
\begin{aligned}
u(x) &= \cosh (\sqrt{|\lambda|} x / c_1), & x \in (-L,L),\\
v(x) &= e^{- \sqrt{|\lambda|} x / c_2 }, & x \in (L,\infty),
\end{aligned}
\]
under the assumption $\lambda < 0$. The boundary conditions in \eqref{Domain0} yield the system
\[
\begin{aligned}
\cosh (\sqrt{|\lambda|} L / c_1) &= e^{- \sqrt{|\lambda|} L / c_2},\\
\frac{2\sqrt{|\lambda|}}{c_1} \sinh (\sqrt{|\lambda|} L / c_1) &= \Big( Z - \frac{\sqrt{|\lambda|}}{c_2}\Big) e^{- \sqrt{|\lambda|} L / c_2}.
\end{aligned}
\]
Upon substitution we obtain the trascendental equation
\[
\sqrt{|\lambda|} \left( \frac{2}{c_1} \tanh \Big( \frac{\sqrt{|\lambda|} L}{c_1}\Big) + \frac{1}{c_2} \right) = Z,
\]
which has a unique real positive solution $\sqrt{|\lambda|} > 0$ only in the case when $Z > 0$. Indeed, consider the function
\[
G(\varrho) := \varrho \left( \frac{2}{c_1} \tanh \Big( \frac{\varrho L}{c_1}\Big) + \frac{1}{c_2} \right) - Z, \qquad \varrho \in [0,\infty),
\]
assuming $Z > 0$. Clearly, $G(0) = -Z < 0$ and $G(\varrho) \to \infty$ as $\varrho \to \infty$. Moreover,
\[
G'(\varrho) = \frac{2}{c_1} \tanh \Big( \frac{\varrho L}{c_1}\Big) + \frac{1}{c_2} + \frac{2 \varrho L}{c_1^2} \frac{1}{\cosh^2 (\varrho L/c_1) } > 0,
\]
for all $\varrho \in (0,\infty)$ and, thus, $G$ is strictly increasing. By the intermediate value theorem there exists a unique value $\varrho_{_Z} > 0$ such that $G(\varrho_{_Z}) = 0$.

From the previous observation we conclude that if $Z > 0$ then $n(\cF_Z)=1$, whereas if $Z \leqq 0$ then $n(\cF_Z) = 0$. The theorem is proved.
\end{proof}

Let us now characterize the resolvent of the operator $\cA = JE$ (defined in \eqref{sg4}) in $H^1(\cG) \times L^2(\cG)$. For that purpose we use the description of the spectrum of $\cF_Z$ from Theorem \ref{spectrumDeltaZ}.

\begin{theorem}
\label{resolvA}
Let $Z \in \R$. For any $\lambda \in \C$ such that $-\lambda^2 \in \rho(\cF_Z)$ we have that $\lambda$ belongs to the resolvent set of $\cA = JE$, with $D(\cA) = D_Z \times L^2(\cG)$; moreover, the resolvent, 
\[
R(\lambda : \cA) = (\lambda I_4 - \cA)^{-1} \, : \, H^1(\cG) \times L^2(\cG) \to D(\cA),
\]
has the following representation for $\mathbf{U} = (\boldsymbol{u}, \boldsymbol{v})$,
\begin{equation}
\label{resoA}
R(\lambda : \cA) \mathbf{U} = \begin{pmatrix}
- R(-\lambda^2 : \cF_Z) (\boldsymbol{v} + \lambda \boldsymbol{u})\\
- \lambda R(-\lambda^2 : \cF_Z) (\boldsymbol{v} + \lambda \boldsymbol{u}) - \boldsymbol{u}
\end{pmatrix},
\end{equation}
where $R(-\lambda^2 : \cF_Z) = (-\lambda^2 I_2 - \cF_Z)^{-1} : L^2(\cG) \to D_Z$.
\end{theorem}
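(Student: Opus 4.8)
The plan is to verify the resolvent formula \eqref{resoA} directly and, as a byproduct, establish that $\lambda \in \rho(\cA)$ whenever $-\lambda^2 \in \rho(\cF_Z)$. First I would fix $\mathbf{U} = (\boldsymbol{u},\boldsymbol{v}) \in H^1(\cG) \times L^2(\cG)$ and set $\mathbf{w} := R(-\lambda^2 : \cF_Z)(\boldsymbol{v} + \lambda \boldsymbol{u})$; since $\boldsymbol{v} + \lambda \boldsymbol{u} \in L^2(\cG)$ and $-\lambda^2$ is in the resolvent set of the self-adjoint operator $\cF_Z$, this $\mathbf{w}$ is a well-defined element of $D_Z = D(\cF_Z)$. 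Define the candidate $\mathbf{V} := (-\mathbf{w},\, -\lambda \mathbf{w} - \boldsymbol{u})^\top$; I must check that $\mathbf{V} \in D(\cA) = D_Z \times L^2(\cG)$ and that $(\lambda I_4 - \cA)\mathbf{V} = \mathbf{U}$. The first component $-\mathbf{w}$ lies in $D_Z$ by construction, and the second component $-\lambda \mathbf{w} - \boldsymbol{u}$ lies in $H^1(\cG) \subset L^2(\cG)$ because $\mathbf{w} \in D_Z \subset H^2(\cG) \subset H^1(\cG)$ and $\boldsymbol{u} \in H^1(\cG)$; thus $\mathbf{V} \in D(\cA)$.

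Next I would carry out the computation $(\lambda I_4 - \cA)\mathbf{V} = \mathbf{U}$ componentwise. Recalling from \eqref{sg4} that $\cA = JE$ sends $(\boldsymbol{a},\boldsymbol{b})^\top$ to $(\boldsymbol{b},\, -\cF_Z \boldsymbol{a})^\top$ (the diagonal matrix $\mathcal F$ acting as $\cF_Z$ on its domain $D_Z$), the equation $(\lambda I_4 - \cA)\mathbf{V} = \mathbf{U}$ reads, with $\mathbf{V} = (\boldsymbol{a},\boldsymbol{b})^\top$,
\[
\lambda \boldsymbol{a} - \boldsymbol{b} = \boldsymbol{u}, \qquad \lambda \boldsymbol{b} + \cF_Z \boldsymbol{a} = \boldsymbol{v}.
\]
Substituting $\boldsymbol{a} = -\mathbf{w}$, $\boldsymbol{b} = -\lambda \mathbf{w} - \boldsymbol{u}$ into the first equation gives $-\lambda \mathbf{w} + \lambda \mathbf{w} + \boldsymbol{u} = \boldsymbol{u}$, which holds identically. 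Substituting into the second gives $-\lambda^2 \mathbf{w} - \lambda \boldsymbol{u} - \cF_Z \mathbf{w} = \boldsymbol{v}$, i.e. $(-\lambda^2 I_2 - \cF_Z)\mathbf{w} = \boldsymbol{v} + \lambda \boldsymbol{u}$, which is precisely the defining relation for $\mathbf{w}$. Hence $(\lambda I_4 - \cA)\mathbf{V} = \mathbf{U}$, so $\lambda I_4 - \cA$ is surjective onto $H^1(\cG)\times L^2(\cG)$.

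For injectivity I would run the same algebra in reverse: if $(\lambda I_4 - \cA)\mathbf{V} = 0$ with $\mathbf{V} = (\boldsymbol{a},\boldsymbol{b})^\top \in D_Z \times L^2(\cG)$, then $\boldsymbol{b} = \lambda \boldsymbol{a}$ and $\lambda \boldsymbol{b} + \cF_Z \boldsymbol{a} = 0$, whence $(-\lambda^2 I_2 - \cF_Z)\boldsymbol{a} = 0$; since $-\lambda^2 \in \rho(\cF_Z)$ this forces $\boldsymbol{a} = 0$ and then $\boldsymbol{b} = 0$. Therefore $\lambda I_4 - \cA : D(\cA) \to H^1(\cG)\times L^2(\cG)$ is a bijection, its inverse is given by the stated formula \eqref{resoA}, and boundedness of $R(\lambda:\cA)$ follows from the boundedness of $R(-\lambda^2:\cF_Z) : L^2(\cG) \to D_Z$ (equipped with the graph norm) together with the continuity of the embeddings involved, or alternatively from the closed graph theorem since $\cA$ is closed. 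I do not anticipate a serious obstacle here; the only point requiring a little care is tracking the precise domains and target spaces (in particular that the second component of the output indeed lands in $L^2(\cG)$ and that $\mathbf{w}$ genuinely satisfies the $\delta$-coupling conditions in $D_Z$, which is automatic from $\mathbf{w} \in D(\cF_Z)$), and making sure the mapping properties $H^1(\cG)\times L^2(\cG) \to D(\cA)$ are stated with the correct norms.
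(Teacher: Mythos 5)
Your proposal is correct and follows essentially the same route as the paper: both reduce $(\lambda I_4-\cA)(\boldsymbol{\phi},\boldsymbol{\psi})=(\boldsymbol{u},\boldsymbol{v})$ to the single equation $(\cF_Z+\lambda^2 I_2)\boldsymbol{\phi}=\boldsymbol{v}+\lambda\boldsymbol{u}$ and invert using $-\lambda^2\in\rho(\cF_Z)$, yielding exactly the formula \eqref{resoA}. Your version is slightly more explicit than the paper's (which glosses over details by reference to an earlier work), in that you verify surjectivity and injectivity separately and comment on boundedness, but there is no substantive difference in method.
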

\begin{proof}
The proof is very similar to that of Theorem 2.2 in \cite{AP1} so we omit it.
\end{proof}

It is actually possible to provide some explicit formulae for $R(-\lambda^2 : \cF_Z)$ in the case where $-\lambda^2 \in \rho(\cF_Z)$ and $\lambda \in \R$. For the sake of completeness and for the reader's convenience we now establish those.

\begin{proposition}
\label{propresolvFZ}
Fix $Z \in \R$ and consider $-\lambda^2 \in \rho(\cF_Z)$ with $\lambda \in \R$. For any $\boldsymbol{u} = (u_1,u_2) \in L^2(\cG)$ we have:
\begin{itemize}
\item[(a)] In the case when $Z \leq 0$ and for all $\lambda > 0$ (without loss of generality) such that $-\lambda^2 \in \rho(\cF_Z)$, set $\Phi := (\cF_Z + \lambda^2 I_2)^{-1} \boldsymbol{u}$ to obtain
\begin{equation}
\label{formularesolvZneg}
\begin{aligned}
\Phi_1(x) &= \frac{d_1}{c_1^2} \cosh( \lambda x / c_1) + \frac{1}{2c_1 \lambda} \int_{-L}^L u_1(y) \cosh(\lambda |x-y|/c_1) \, dy, & x \in (-L,L),\\
\Phi_2(x) &= \frac{d_2}{c_2^2} e^{-\lambda x /c_2} + \frac{1}{2c_2 \lambda} \int_{L}^\infty u_2(y) e^{-\lambda |x-y|/c_2} \, dy, & x \in (L,\infty).
\end{aligned}
\end{equation}
\item[(b)] In the case when $Z > 0$ and for all $\lambda > 0$ (without loss of generality) such that $-\lambda^2 \in \rho(\cF_Z)$ and with $-\lambda \neq \varrho_{_Z}^2$, set $\Psi:= (\cF_Z + \lambda^2 I_2)^{-1} \boldsymbol{u}$ to obtain
\begin{equation}
\label{formularesolvZpos}
\begin{aligned}
\Psi_1(x) &= \frac{\cosh(\varrho_z x / c_1)}{\lambda + \varrho_z^2} \big\langle u_1, \cosh(\varrho_z x / c_1)\big\rangle + \Phi_1(x), & x \in (-L,L),\\
\Psi_2(x) &=  \frac{e^{-\varrho_z x / c_2}}{\lambda + \varrho_z^2} \big\langle u_2, e^{-\varrho_z x / c_2}\big\rangle + \Phi_2(x),& x \in (L,\infty).
\end{aligned}
\end{equation}
\end{itemize}
The constants $d_j = d_j(\lambda, \Phi_j)$, $j =1,2$, are chosen such that $\Phi \in D_Z$.
\end{proposition}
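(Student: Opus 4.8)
The plan is to use Theorem~\ref{resolvA} to reduce everything to an explicit description of $R(-\lambda^2:\cF_Z)$, i.e. to solving the boundary value problem $(\cF_Z+\lambda^2 I_2)\Phi=\boldsymbol u$ with $\Phi\in D_Z$. On each edge this is the constant--coefficient equation $-c_j^2\Phi_j''+\lambda^2\Phi_j=u_j$, whose general solution is a particular solution plus a linear combination of $e^{\pm\lambda x/c_j}$. On the half--line $(L,\infty)$ the requirement $\Phi_2\in H^2(L,\infty)$ forces the growing mode to be absent, so the homogeneous part is a multiple of $e^{-\lambda x/c_2}$; a particular solution is obtained by convolution against the free--space kernel $\tfrac{1}{2c_2\lambda}e^{-\lambda|x-y|/c_2}$, and since $u_2\in L^2$ and $e^{-\lambda|\cdot|/c_2}\in L^1(\R)$, Young's inequality shows this convolution, hence $\Phi_2$, lies in $L^2(L,\infty)$ and then in $H^2(L,\infty)$. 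On the loop $(-L,L)$ one writes $\Phi_1$ as the corresponding kernel--convolution particular solution plus a homogeneous term; here no decay is imposed, so two free constants are available. After rearrangement this ansatz takes the form displayed in \eqref{formularesolvZneg}.

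With the ansatz in hand, the free constants are fixed by imposing the three linear constraints that define $D_Z$ in \eqref{Domain0}: the continuity conditions $\Phi_1(L)=\Phi_1(-L)=\Phi_2(L)$ and the flux condition $\Phi_1'(L)-\Phi_1'(-L)=\Phi_2'(L)+Z\Phi_2(L)$. Evaluating the particular solutions and their derivatives at $\pm L$ and at $L$ turns this into a $3\times3$ linear system for the coefficients, whose determinant (as a function of $\lambda$) vanishes precisely when $-\lambda^2\in\sigma(\cF_Z)$ --- this is exactly the content of the determinant computations in the proof of Theorem~\ref{spectrumDeltaZ}. Hence for $-\lambda^2\in\rho(\cF_Z)$ the system is uniquely solvable, and solving it yields the closed forms in \eqref{formularesolvZneg} for the case $Z\le 0$; the constants $d_1,d_2$ are by construction the solution of that system, i.e. chosen so that $\Phi\in D_Z$.

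For $Z>0$ one must account, by Theorem~\ref{spectrumDeltaZ}, for the single simple negative eigenvalue $-\varrho_Z^2$ with explicit eigenfunction $\boldsymbol\Phi_Z=(\cosh(\varrho_Z x/c_1),\,e^{-\varrho_Z x/c_2})$. The cleanest route is to split $\boldsymbol u=\dfrac{\langle\boldsymbol u,\boldsymbol\Phi_Z\rangle}{\|\boldsymbol\Phi_Z\|^2}\,\boldsymbol\Phi_Z+\boldsymbol u^\perp$ with $\boldsymbol u^\perp\perp\boldsymbol\Phi_Z$: on the eigenline $\cF_Z$ acts as $-\varrho_Z^2$, so $(\cF_Z+\lambda^2 I_2)^{-1}$ acts there by multiplication by $(\lambda^2-\varrho_Z^2)^{-1}$, which produces the rank--one correction term of \eqref{formularesolvZpos} (legitimate precisely when $\lambda\ne\pm\varrho_Z$), while on $\boldsymbol u^\perp$ the operator has no point spectrum below $0$ and the boundary value problem is solved exactly as in case (a), producing the remaining $\Phi$--contribution built from \eqref{formularesolvZneg}. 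Equivalently, one can solve the same $3\times3$ system directly, now staying away from $\lambda=\pm\varrho_Z$ where its determinant vanishes.

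The bulk of the work is routine but fiddly: evaluating the particular solutions and their derivatives at the endpoints, assembling and inverting the $3\times3$ system, and checking that the output matches \eqref{formularesolvZneg}--\eqref{formularesolvZpos}. I would organize it so that the system's determinant is identified at the outset with the expression whose zeros characterize $\sigma(\cF_Z)$ in Theorem~\ref{spectrumDeltaZ}, which makes the invertibility transparent. The one genuine point that is not pure bookkeeping --- and the step I would treat most carefully --- is verifying that the constructed $\Phi_2$ really belongs to $H^2(L,\infty)$, which is exactly where the decay selection of the homogeneous mode and the Young--inequality estimate for the convolution are needed; everything else is linear algebra.
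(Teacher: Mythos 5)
Your proposal follows essentially the same route as the paper's (very brief) proof: a variation-of-constants/Green's-function particular solution plus homogeneous modes on each edge, with the free constants fixed by the vertex conditions defining $D_Z$, and, for $Z>0$, an additional projection onto the negative eigenspace spanned by $\boldsymbol{\Phi}_Z$. You supply more detail than the paper does (the $3\times 3$ system for the constants, the identification of its determinant with the spectral condition of Theorem \ref{spectrumDeltaZ}, and the $H^2(L,\infty)$ membership check), but the underlying argument is the same.
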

\begin{proof}
Formulae \eqref{formularesolvZneg} and \eqref{formularesolvZpos} result from the variation of constants formula: a solution to the homogeneous problem plus a convolution with the Green's function associated to the operator (see formula \eqref{formularesolvZneg} when $Z \leqq 0$). In the case when there exists a negative eigenvalue (that is, when $Z > 0$) one also needs to project onto the associated eigenspace (formula \eqref{formularesolvZpos}). The reader can directly verify that $(\cF_Z + \lambda^2 I_2)\Phi = \boldsymbol{u}$ if $Z \leqq 0$, and that $(\cF_Z + \lambda^2 I_2)\Psi = \boldsymbol{u}$ if $Z > 0$, whenever $-\lambda^2 \in \rho(\cF_Z)$.

\end{proof}

Next, from Theorem \ref{spectrumDeltaZ} we can define the following equivalent $X^1_Z$-norm to $H^1(\cG)$, for $\boldsymbol{v} =(v_j)_{j=1}^2\in H^1(\cG)$
\begin{equation}\label{1norm}
\|\boldsymbol{v} \|_{X^1_Z}^2=\|\boldsymbol{v}'\|^2 _{L^2(\cG)} + (\beta+1)\|\boldsymbol{v} \|^2 _{L^2(\cG)} +Z|v_1(L)|^2,
\end{equation}
where $\beta \geqq 0$ is defined as,
\begin{equation}
\label{defofbeta}
\beta := \begin{cases}
\varrho_{_Z}^2, & \text{if } Z > 0,\\
0, & \text{if } Z \leqq 0.
\end{cases}
\end{equation}

We will denote by $H^1_Z(\cG)$ the space $H^1(\cG)$ with the norm $\|\cdot\|_{X^1_Z}$. Moreover, for $\boldsymbol{u} =(u_j)_{j=1}^2$, the following well-defined inner product in $H^1_Z(\cG)$, 
\begin{equation}
\label{1inner}
\langle \boldsymbol{u} , \boldsymbol{v} \rangle_{1,Z}= \int_{-L}^L u'_1(x)\overline{v'_1(x)}dx +  \int_L^{\infty} u'_2(x) \overline{v'_2(x)}dx +(\beta +1) \langle \boldsymbol{u} , \boldsymbol{v} \rangle + Zu_1(L)\overline{v_1(L)},
\end{equation}
induces the $X^1_Z$-norm above (here we are considering $c_j^2 = 1$, $j=1,2$, without loss of generality).

The following theorem verifies that the operator $\mathcal A = JE$ is indeed the infinitesimal generator of a $C_0$-semigroup upon application of the classical Lumer-Phillips theory (see Pazy \cite{Pa}). 

\begin{theorem}
\label{cauchy1}
Let $Z\in \mathbb R$ and consider the linear operators $J$ and $E$ defined in \eqref{sg4}. Then, $\mathcal A = JE$ with $D(\mathcal A)= D(\mathcal F_Z)\times \mathcal E(\mathcal{\cG})$ is the infinitesimal  generator of a $C_0$-semigroup $\{\mathcal{S}(t)\}_{t\geqq 0}$ on $H^1(\mathcal{\cG})\times L^2(\mathcal{\cG})$. The initial value problem 
\begin{equation}
\label{LW1}
\begin{cases}
\boldsymbol{w} _{t}=\mathcal A\boldsymbol{w},  \\
\boldsymbol{w} (0)=\boldsymbol{w} _0\in D(\mathcal A)=D(\mathcal F_Z)\times \mathcal E(\mathcal{\cG}),
\end{cases}
\end{equation}
has a unique solution $\boldsymbol{w} \in C([0, \infty): D(\mathcal A))\cap C^1([0, \infty): H^1(\mathcal{\cG})\times L^2(\mathcal{\cG}))$ given by $\boldsymbol{w} (t)=\mathcal{S}(t)\boldsymbol{w} _0$, $t\geqq 0$. Moreover, for any $\boldsymbol{u}\in H^1(\mathcal{\cG})\times L^2(\mathcal{\cG})$ and $\theta>\beta+1$, where $\beta \geqq 0$ is given in \eqref{defofbeta}, we have the representation formula
\begin{equation}\label{FRW}
\mathcal{S}(t)\boldsymbol{u} = \frac{1}{2\pi i}\int_{\theta-i\infty}^{\theta+i\infty} e^{\lambda t} R(\lambda: \mathcal A) \boldsymbol{u} \, d\lambda
\end{equation}
where $\lambda\in \rho(\mathcal A)$ with $\RE \lambda = \theta$ and $R(\lambda: \mathcal A)=(\lambda I-\mathcal A)^{-1}$, and for every $\delta>0$, the integral converges uniformly in $t$ for every $t\in [\delta, 1/\delta]$.
\end{theorem}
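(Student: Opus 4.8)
The plan is to apply the Lumer--Phillips theorem to the operator $\mathcal A = JE$ on the Hilbert space $\mathcal H := H^1_Z(\cG) \times L^2(\cG)$, equipped with the inner product induced by $\langle\cdot,\cdot\rangle_{1,Z}$ in \eqref{1inner} on the first factor and the usual $L^2(\cG)$ inner product on the second. First I would verify that $\mathcal A$ is densely defined (which is clear, since $D(\mathcal F_Z)$ is dense in $H^1_Z(\cG)$ and $\mathcal E(\cG)$ is dense in $L^2(\cG)$) and closed. The two substantive hypotheses of Lumer--Phillips are: (i) $\mathcal A$ is quasi-dissipative, i.e.\ $\RE\langle \mathcal A \boldsymbol{w}, \boldsymbol{w}\rangle_{\mathcal H} \leqq \omega \|\boldsymbol{w}\|_{\mathcal H}^2$ for some $\omega \in \R$ and all $\boldsymbol{w} \in D(\mathcal A)$; and (ii) the range condition, that $\lambda I - \mathcal A$ is surjective onto $\mathcal H$ for some (hence all) $\lambda$ with $\RE\lambda > \omega$.

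For the dissipativity estimate, I would take $\boldsymbol{w} = (\boldsymbol{u},\boldsymbol{v})^\top \in D(\mathcal F_Z)\times\mathcal E(\cG)$ and compute $\langle \mathcal A\boldsymbol{w},\boldsymbol{w}\rangle_{\mathcal H} = \langle \boldsymbol{v}, \boldsymbol{u}\rangle_{1,Z} + \langle -\mathcal F_Z \boldsymbol{u}, \boldsymbol{v}\rangle_{L^2(\cG)}$. The key point is that, by integration by parts edge-by-edge and careful bookkeeping of the boundary terms at $\nu = L$ and at $x = -L$ — here the domain condition $D_Z$ in \eqref{Domain0} together with $\boldsymbol{v}\in\mathcal E(\cG)$ makes the $\delta$-coupling boundary contributions cancel exactly — one finds $\langle -\mathcal F_Z\boldsymbol{u},\boldsymbol{v}\rangle_{L^2(\cG)} = -\overline{\langle \boldsymbol{v},\boldsymbol{u}\rangle_{1,Z}} + (\beta+1)\langle\boldsymbol{u},\boldsymbol{v}\rangle_{L^2(\cG)} + Z u_1(L)\overline{v_1(L)} + \dots$, so that the leading (symmetric) pieces cancel in the real part and only lower-order terms of the form $(\beta+1)\RE\langle\boldsymbol{u},\boldsymbol{v}\rangle$ survive, which are bounded by $\tfrac12(\beta+1)\|\boldsymbol{w}\|_{\mathcal H}^2$. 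The shift by $\beta+1$ in the definition of the $X^1_Z$-norm is precisely what guarantees $\|\cdot\|_{X^1_Z}$ is a genuine (positive) norm even when $Z < 0$ makes the raw Dirichlet form indefinite, and it is what produces the finite $\omega$. I would take $\omega = \beta+1$ or a convenient multiple thereof.

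For the range condition I would invoke Theorem \ref{resolvA}: for $\lambda\in\R$ with $-\lambda^2\in\rho(\cF_Z)$ — which holds for all $\lambda$ real with $\lambda^2 > \beta$, since $\sigma(\cF_Z)\subseteq\{-\varrho_Z^2\}\cup[0,\infty)$ and $-\lambda^2 < -\beta \leqq -\varrho_Z^2$ — the resolvent $R(\lambda:\mathcal A)$ is given by the explicit formula \eqref{resoA}, mapping $H^1(\cG)\times L^2(\cG)$ boundedly into $D_Z\times L^2(\cG)$. This shows $\lambda I - \mathcal A$ is a bijection of $D(\mathcal A)$ onto $\mathcal H$ for all sufficiently large real $\lambda$, which is exactly the range condition (one checks that $R(\lambda:\mathcal A)\boldsymbol{u}$ lands in $D(\mathcal A) = D(\mathcal F_Z)\times\mathcal E(\cG)$, using that $R(-\lambda^2:\cF_Z)$ maps into $D_Z$ and that the second component $-\lambda R(-\lambda^2:\cF_Z)(\cdots) - \boldsymbol{u}$ inherits continuity at the vertex from the first). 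Lumer--Phillips then yields that $\mathcal A$ generates a $C_0$-semigroup $\{\mathcal S(t)\}_{t\geqq0}$ of type $\omega = \beta+1$, and the existence, uniqueness and regularity of the solution of \eqref{LW1} is the standard abstract Cauchy-problem conclusion for an initial datum in $D(\mathcal A)$.

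Finally, the representation formula \eqref{FRW} is the inverse-Laplace-transform (Bromwich/Dunford) formula for a $C_0$-semigroup: since $\|\mathcal S(t)\| \leqq M e^{(\beta+1)t}$, for any $\theta > \beta+1$ the vertical line $\RE\lambda = \theta$ lies in $\rho(\mathcal A)$, and one has $\mathcal S(t)\boldsymbol{u} = \frac{1}{2\pi i}\int_{\theta-i\infty}^{\theta+i\infty} e^{\lambda t} R(\lambda:\mathcal A)\boldsymbol{u}\,d\lambda$ with the integral converging (as an improper Riemann integral, or in the Cesàro/Abel sense, improving to genuine convergence on $D(\mathcal A)$ or after one smoothing) uniformly for $t$ in compact subsets of $(0,\infty)$; I would cite the standard semigroup references (e.g.\ Pazy, or Arendt--Batty--Hieber--Neubrander) and the analogous argument in \cite{AP1}. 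The main obstacle is the dissipativity computation: getting the boundary terms at the vertex to cancel requires using the full strength of the $\delta$-coupling condition in $D_Z$ and the continuity condition defining $\mathcal E(\cG)$, and tracking the sign and placement of the $Z|v_1(L)|^2$ term in the $X^1_Z$-norm so that the estimate closes with a finite $\omega$ independent of the (possibly negative) sign of $Z$.
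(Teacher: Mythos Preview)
Your proposal is correct and follows essentially the same route as the paper. The only cosmetic difference is that the paper first introduces the shifted operator $\mathcal B=\mathcal A-(\beta+1)I$ and shows $\mathcal B$ is genuinely dissipative on $X_Z=H^1_Z(\cG)\times L^2(\cG)$ (so that Lumer--Phillips yields a contraction semigroup for $\mathcal B$), and then recovers $\mathcal S(t)=e^{(\beta+1)t}\,\cU(t)$; you instead phrase the same computation as quasi-dissipativity of $\mathcal A$ with growth bound $\omega=\beta+1$, which is equivalent. Both arguments invoke Theorem~\ref{resolvA} for the range condition and standard semigroup/Laplace-inversion results for the representation formula~\eqref{FRW}.
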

\begin{proof} By considering the Hilbert space $X_Z\equiv H^1_Z(\mathcal{\cG})\times L^2(\mathcal{\cG})$ with inner product $\langle \cdot, \cdot\rangle_{X_Z}=\langle \cdot, \cdot\rangle_{1,Z} + \langle \cdot, \cdot\rangle$, with $\langle \cdot, \cdot\rangle_{1,Z}$ defined in \eqref{1inner}, we can follow the strategy of proof of Theorem 2.5 in Angulo and Plaza \cite{AP1} and from standard semigroup theory and properties of the Laplace transform (see Pazy \cite{Pa}) we obtain the result.

\end{proof}

Next result simply states the (expected) invariance property of the energy space under the action of the semigroup.
\begin{proposition}
\label{preser}
The subspace $\mathcal E(\cG)\times L^2(\cG)$ is invariant under the semigroup $\{\cS(t)\}_{t\geqq 0}$ defined by formula \eqref{FRW}. Moreover, $\cS(t)(\mathcal E(\cG)\times L^2(\cG))\subset \mathcal E(\cG)\times \mathcal C(\cG)$, $t>0$, where
\begin{equation}\label{0continuity}
\mathcal C(\cG)=\{(v_j)_{j=1}^2 \in  L^2(\cG): v_1(L)=v_2(L)\}.
\end{equation}
\end{proposition}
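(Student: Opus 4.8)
The plan is to dispatch the two assertions separately, using the explicit resolvent formula \eqref{resoA}, the Bromwich representation \eqref{FRW}, and the elementary but crucial fact that $\mathcal{E}(\cG)$ is a \emph{closed} subspace of $H^1(\cG)$ — it is the joint kernel of the bounded functionals $(f,g)\mapsto f(-L)-f(L)$ and $(f,g)\mapsto f(L)-g(L)$ on $H^1(\cG)$ — so that $\mathcal{E}(\cG)\times L^2(\cG)$ is closed in $H^1(\cG)\times L^2(\cG)$.

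For the invariance of $\mathcal{E}(\cG)\times L^2(\cG)$, I would first check that the resolvent $R(\lambda:\cA)$ already leaves this subspace invariant, in fact mapping it into $D_Z\times\mathcal{E}(\cG)$. Indeed, for $\mathbf{U}=(\boldsymbol{u},\boldsymbol{v})\in\mathcal{E}(\cG)\times L^2(\cG)$ one has $\boldsymbol{v}+\lambda\boldsymbol{u}\in L^2(\cG)$, so by Theorem \ref{spectrumDeltaZ} the vector $R(-\lambda^2:\cF_Z)(\boldsymbol{v}+\lambda\boldsymbol{u})$ lies in $D_Z\subset\mathcal{E}(\cG)$; hence the first entry of \eqref{resoA} is in $D_Z\subset\mathcal{E}(\cG)$, and the second entry $-\lambda R(-\lambda^2:\cF_Z)(\boldsymbol{v}+\lambda\boldsymbol{u})-\boldsymbol{u}$ is a sum of an element of the linear subspace $D_Z\subset\mathcal{E}(\cG)$ and $-\boldsymbol{u}\in\mathcal{E}(\cG)$, hence lies in $\mathcal{E}(\cG)$. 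Consequently the integral in \eqref{FRW} is, by Theorem \ref{cauchy1}, an $H^1(\cG)\times L^2(\cG)$-limit (uniform for $t$ in compact subsets of $(0,\infty)$) of finite linear combinations of vectors $R(\lambda:\cA)\mathbf{U}\in\mathcal{E}(\cG)\times L^2(\cG)$, and since the latter space is closed in $H^1(\cG)\times L^2(\cG)$ we get $\cS(t)\mathbf{U}\in\mathcal{E}(\cG)\times L^2(\cG)$ for every $t>0$, while $t=0$ is trivial. (Equivalently, $D(\cA)=D_Z\times\mathcal{E}(\cG)$ is $\cS(t)$-invariant and dense in $\mathcal{E}(\cG)\times L^2(\cG)$ for the $H^1\times L^2$ topology, so its closure is invariant by boundedness of $\cS(t)$.)

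For the sharper conclusion at positive times, the first-component statement is already contained in the above, and the issue reduces to showing that the second component of $\cS(t)\mathbf{U}$ lies in $\mathcal{C}(\cG)$, i.e., that its one-sided traces at $\nu=L$ coincide. For initial data in $D(\cA)$ this is immediate, because $\cS(t)\mathbf{U}\in D(\cA)=D_Z\times\mathcal{E}(\cG)$ and $\mathcal{E}(\cG)\subset\mathcal{C}(\cG)$ (continuity across the vertex being part of the definition of $\mathcal{E}(\cG)$). For general $\mathbf{U}\in\mathcal{E}(\cG)\times L^2(\cG)$ one cannot simply pass to the $L^2$-limit, since $\mathcal{C}(\cG)$ is \emph{not} closed in $L^2(\cG)$; instead I would invoke a lateral trace estimate (of hidden-regularity type) for the linear flow. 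Writing $\cS(t)\mathbf{U}=(\boldsymbol{u}(t),\boldsymbol{v}(t))$ and solving on each edge the decoupled linear wave equation $\partial_t^2 u_j=c_j^2\partial_x^2 u_j$ by d'Alembert's formula — with the $\delta$-matching \eqref{Domain0} determining the bounded reflection and transmission at the vertex — the restriction to the line $x=L$ of the transported $L^2$-datum shows that $t\mapsto v_j(t,L)$ is controlled in $L^2_{\mathrm{loc}}(0,\infty)$ by $\|\mathbf{U}\|_{\mathcal{E}(\cG)\times L^2(\cG)}$. Applying this to differences $\mathbf{U}-\mathbf{U}_n$ with $\mathbf{U}_n\in D(\cA)$ (for which $v_{1,n}(\cdot,L)=v_{2,n}(\cdot,L)$ holds identically) then yields $v_1(\cdot,L)=v_2(\cdot,L)$ in $L^2_{\mathrm{loc}}(0,\infty)$, i.e., $\boldsymbol{v}(t)\in\mathcal{C}(\cG)$ for a.e.\ $t>0$.

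I expect this last step — the membership of the velocity field in $\mathcal{C}(\cG)$ — to be the only genuine obstacle: since \eqref{sg3} is hyperbolic and $\cS(t)$ has no smoothing effect, $\boldsymbol{v}(t)$ cannot be placed in $H^1(\cG)$, and the vertex-matching condition is unstable under $L^2$-limits, so one really needs the traveling-wave / trace-regularity estimate to transfer the condition from the dense class $D(\cA)$. The invariance of $\mathcal{E}(\cG)\times L^2(\cG)$, in contrast, is a soft consequence of the resolvent formula \eqref{resoA} and the closedness of $\mathcal{E}(\cG)$ in $H^1(\cG)$, and should require no delicate estimate.
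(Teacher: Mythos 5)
Your argument for the invariance of $\mathcal E(\cG)\times L^2(\cG)$ is exactly the paper's: the resolvent formula \eqref{resoA} shows $R(\lambda:\cA)$ maps $\mathcal E(\cG)\times L^2(\cG)$ into $D_Z\times\mathcal E(\cG)$, and one passes to the limit in the Bromwich integral \eqref{FRW} using that $\mathcal E(\cG)\times L^2(\cG)$ is closed in $H^1(\cG)\times L^2(\cG)$. Where you diverge is the second assertion. The paper's entire proof of $\cS(t)(\mathcal E(\cG)\times L^2(\cG))\subset\mathcal E(\cG)\times\mathcal C(\cG)$ is the single observation that $R(\lambda:\cA)\Phi\in\mathcal E(\cG)\times\mathcal E(\cG)\subset\mathcal E(\cG)\times\mathcal C(\cG)$, followed by the same appeal to \eqref{FRW}; it does not address the point you raise, namely that the trace condition $v_1(L)=v_2(L)$ is not continuous (indeed not even well defined) with respect to the $L^2(\cG)$ topology in which the integral converges, so that membership in $\mathcal C(\cG)$ need not survive the limit. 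Your diagnosis of this subtlety is sound, and your proposed repair -- a hidden-regularity/lateral-trace estimate obtained from d'Alembert's formula on each edge, applied to differences $\mathbf U-\mathbf U_n$ with $\mathbf U_n\in D(\cA)$ -- is a reasonable and essentially standard way to make the second statement rigorous; it buys an honest interpretation of the trace of the velocity field at the vertex, at the price of delivering the matching only in $L^2_{\mathrm{loc}}$ in time (a.e.\ $t>0$) rather than pointwise in $t$, and it is only sketched (the boundedness of the reflection/transmission coefficients for the $\delta$-coupling and the precise trace estimate would need to be written out). In short: part one is the paper's proof verbatim; part two identifies and attempts to fill a step that the paper's own proof passes over silently.
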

\begin{proof} By the representation of $\cS(t)$ in \eqref{FRW} it suffices to show that the resolvent operator $R(\lambda:\mathcal A)\Phi\in \mathcal E(\cG)\times L^2(\cG)$ for $\Phi\in \mathcal E(\cG)\times L^2(\cG)$. Indeed, for $\Psi=(\bold u, \bold v)$ we have from formula \eqref{resoA} that $R(-\lambda^2: \mathcal F_Z)(\lambda \bold u+\bold v)\in D(\mathcal F_Z)\subset \mathcal E(\cG)$ and so $R(\lambda:\mathcal A)\Phi\in \mathcal E(\cG)\times \mathcal E(\cG)\subset \mathcal E(\cG)\times \mathcal C(\cG)\subset \mathcal E(\cG)\times L^2(\cG)$.
\end{proof}

We are now ready to prove Theorem \ref{IVP}.

\begin{proof}[Proof of Theorem \ref{IVP}]
Based on Theorem \ref{cauchy1} and Proposition \ref{preser}, the local well-posedness result in $ \mathcal E(\cG)\times  L^2(\cG)$ follows from the Banach fixed point theorem and standard arguments. It is to be noticed that the contraction mapping principle has the advantage that, since the nonlinear term $F(\mathbf W)$ is smooth, then the Implicit Function Theorem implies the smoothness of the data-solution map associated to the sine-Gordon equation (see \cite{AngGol18b,AP1,AP2,AP3} for further information). Lastly, from Proposition \ref{preser}  we obtain that for all $t\in (0, T]$, $\boldsymbol{w}(t) \in \mathcal E(\cG)\times  \mathcal C(\mathcal Y) $. This finishes the proof.
\end{proof}

\section{Linear instability criterion for the sine-Gordon model on a tadpole graph}
\label{seccriterium}

 In this section we establish a linear instability criterion of stationary solutions for the sine-Gordon model \eqref{sg3} on a tadpole graph $\mathcal G$. It is based on the results by Angulo and Cavalcante in \cite{AC, AC1} for stationary solutions of evolution models on metric graphs. By the sake of completeness we establish the main points of our criterion. More importantly, the criterion is so general that  it also applies to any type of stationary solutions independently of the boundary conditions under consideration and can be therefore used to study configurations with boundary rules at the vertex of $\delta'$-interaction type, or with other types of stationary solutions to the sine-Gordon equation.

Let us suppose that $JE$ in \eqref{sg4} on a domain $D(JE)\subset L^2(\mathcal G)$ is the infinitesimal generator of a $C_0$-semigroup on $L^2(\mathcal G)$ and we consider {\it a priori} a  stationary solution $\Phi=(\phi_1(x), \phi_2(x),0,0)$ for the sine-Gordon model on a tadpole graph such that $\Phi \in D(JE)$. Thus, every component satisfies the equation
\begin{equation}\label{statio}
-c^2_j \phi''_j + \sin \phi_j =0,\quad j=1,2.
\end{equation}
 Now, we suppose  that $\bold W$ satisfies formally  equality in \eqref{sg3} and we define the new variable
  \begin{equation}\label{stat3}
 \bold V \equiv \bold W-\Phi.
  \end{equation}
Then, from \eqref{statio} we obtain the following  linearized system for \eqref{sg3} around $\Phi$,
   \begin{equation}\label{stat4}
  \bold V_t=J\mathcal E\bold V,
 \end{equation} 
  with $\mathcal E$ being the $4\times 4$ diagonal-matrix $\mathcal E=\left(\begin{array}{cc} \mathcal L & 0 \\0 & I_2\end{array}\right)$, and 
 \begin{equation}\label{stat5}
\mathcal{L} =\Big (\Big(-c_j^2\frac{d^2}{dx^2}+ \cos \phi_j
\Big)\delta_{j,k} \Big ),\qquad 1\leqq j, k\leqq 2.
\end{equation}

We point out the equality $J\mathcal E=J E+\mathcal{T}$, with 
$$
\mathcal{T} = \left(\begin{array}{cc} 0& 0 \\ \big( - \cos(\phi_j) \, \delta_{j,k} \big)& 0\end{array}\right)
$$
 being a \emph{bounded} operator on $H^1(\mathcal{G})\times L^2(\mathcal{G})$. This  implies that $J\mathcal E$ also generates a  $C_0$-semigroup on  $H^1(\mathcal{G})\times L^2(\mathcal{G})$ (see Pazy \cite{Pa}).
  
In the sequel, our objective is to provide sufficient conditions for the trivial solution $\bold V \equiv 0$ to be unstable by the linear flow \eqref{stat4}. More precisely, we are interested in finding a {\it growing mode solution} of \eqref{stat4} with the form $ \bold V=e^{\lambda t} \Psi$ and $\RE \lambda >0$. In other words, we need to solve the formal system 
 \begin{equation}\label{stat6}
 J\mathcal E \Psi=\lambda \Psi,
\end{equation}
with $\Psi\in D(J\mathcal E)$.  If we denote by $\sigma(J\mathcal E)$ the spectrum  of $J\mathcal E$, it is well known that we can divide this into two disjoint parts $\sigma(J\mathcal E)= \sigma_{\mathrm{pt}}(J\mathcal E)\cup \sigma_{\mathrm{ess}}(J\mathcal E)$, where $\lambda \in  \sigma_{\mathrm{pt}}(J\mathcal E)$ if $\lambda$ is isolated and with finite multiplicity, and $\sigma_{\mathrm{ess}}(J\mathcal E)$ is the essential spectrum for $J\mathcal E$ (see \cite{RS4}). The later discussion suggests the usefulness of the following definition.
\begin{definition}
The stationary vector solution $\Phi \in D(\mathcal E)$    is said to be \textit{spectrally stable} for model sine-Gordon if the spectrum of $J\mathcal E$, $\sigma(J\mathcal E)$, satisfies $\sigma(J\mathcal E)\subset i\mathbb{R}.$
Otherwise, the stationary solution $\Phi\in D(\mathcal E)$   is said to be \textit{spectrally unstable}.
\end{definition}

It is standard to show that $ \sigma_{\mathrm{pt}}(J\mathcal E)$ is symmetric with respect to both the real and imaginary axes and that $ \sigma_{\mathrm{ess}}(J\mathcal E)\subset i\mathbb{R}$, by supposing $J$ skew-symmetric and $\mathcal E$ self-adjoint   (for instance, under Assumption $(S_3)$ below for $\mathcal L$; see \cite[Lemma 5.6 and Theorem 5.8]{GrilSha90}). These cases on $J$ and  $\mathcal E$ will be considered in our theory. Hence  it is equivalent to say that $\Phi\in D(J\mathcal E)$ is  \textit{spectrally stable} if $ \sigma_{\mathrm{pt}}(J\mathcal E)\subset i\mathbb{R}$, and it is spectrally unstable if $ \sigma_{\mathrm{pt}}(J\mathcal E)$ contains a point $\lambda$ with $\RE \lambda>0$.
 
 Next, we establish our theoretical framework and the main assumptions in order to obtain a nontrivial solution to problem \eqref{stat6}:
 \begin{enumerate}
 \item[($S_1$)]  $J\mathcal E$ is the generator of a $C_0$-semigroup $\{S(t)\}_{t\geqq 0}$.  
 \item[($S_2$)] Let $\mathcal L$ be the matrix-operator in \eqref{stat5}  defined on a domain $D(\mathcal L)\subset L^2(\mathcal{G})$ on which $\mathcal L$ is self-adjoint.
 \item[($S_3$)] Suppose $\mathcal L:D(\mathcal L)\to L^2(\mathcal{G})$ is  invertible  with Morse index $n(\mathcal L)=1$ and such that $\sigma(\mathcal L)=\{\lambda_0\}\cup J_0$ with $J_0\subset [r_0, \infty)$, for $r_0>0$ and $\lambda_0<0$.
 \end{enumerate} 
 
Our linear instability criterion of stationary solutions for the sine-Gordon on a tadpole graph  is the following.
\begin{theorem}\label{crit}
Suppose the assumptions $(S_1)$ - $(S_3)$ hold.  Then the operator $J\mathcal E$ has a real positive and a real negative eigenvalue. 
\end{theorem}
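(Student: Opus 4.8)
The strategy is to reduce the eigenvalue problem \eqref{stat11} for the $4\times 4$ operator $J\mathcal E$ to a scalar equation involving the resolvent of the self-adjoint operator $\mathcal L$, and then to exploit the hypothesis $n(\mathcal L)=1$ together with the invertibility of $\mathcal L$ to produce a real positive root. Writing $\Psi=(\mathbf y,\mathbf z)^\top$ with $\mathbf y,\mathbf z\in L^2(\mathcal G)$, the system $J\mathcal E\Psi=\lambda\Psi$ becomes $\mathbf z=\lambda\mathbf y$ and $-\mathcal L\mathbf y=\lambda\mathbf z=\lambda^2\mathbf y$, i.e. $\mathcal L\mathbf y=-\lambda^2\mathbf y$. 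So a growing mode with $\RE\lambda>0$ exists if and only if $\mathcal L$ has a \emph{negative} eigenvalue $-\lambda^2<0$ that is realized by an eigenfunction lying in the appropriate domain; the sign choice $\lambda=\sqrt{-\mu}>0$ (for $\mu=-\lambda^2<0$ the relevant negative eigenvalue of $\mathcal L$) then gives the real positive eigenvalue of $J\mathcal E$, and $\lambda=-\sqrt{-\mu}<0$ gives the real negative one, using the symmetry of $\sigma_{\mathrm{pt}}(J\mathcal E)$ about the real axis noted before the theorem.

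First I would make precise the equivalence just sketched: under $(S_1)$ the spectral mapping considerations are legitimate, and under $(S_2)$--$(S_3)$ the operator $\mathcal L$ is self-adjoint, invertible, with exactly one negative eigenvalue $\lambda_0<0$ and the rest of the spectrum in $[r_0,\infty)$. By $(S_3)$ there is a (simple) negative eigenvalue $\lambda_0$ with eigenfunction $\chi\in D(\mathcal L)$, $\mathcal L\chi=\lambda_0\chi$. Setting $\lambda:=\sqrt{-\lambda_0}>0$ and $\Psi:=(\chi,\lambda\chi)^\top$, one checks directly that $\Psi\in D(J\mathcal E)=D(\mathcal E)$ and $J\mathcal E\Psi=\lambda\Psi$; likewise $\lambda':=-\sqrt{-\lambda_0}$ with $\Psi':=(\chi,\lambda'\chi)^\top$ gives $J\mathcal E\Psi'=\lambda'\Psi'$. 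Finally one must verify that these $\lambda,\lambda'$ are genuine \emph{eigenvalues} of $J\mathcal E$ (isolated, finite multiplicity), which follows because $-\lambda^2=\lambda_0$ is an isolated eigenvalue of finite multiplicity of $\mathcal L$ and $\sigma_{\mathrm{ess}}(J\mathcal E)\subset i\mathbb R$, so any point with nonzero real part is automatically in $\sigma_{\mathrm{pt}}(J\mathcal E)$.

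In fact, once the algebraic reduction $\mathcal L\mathbf y=-\lambda^2\mathbf y$ is in hand, the proof is short, so I expect the bulk of the write-up to consist of two bookkeeping points rather than a genuine obstacle. The first is the domain check: one must confirm that the eigenfunction $\chi$ of $\mathcal L$ (which lies in $D(\mathcal L)=D_Z$, carrying the $\delta$-coupling conditions) indeed produces a vector $\Psi=(\chi,\lambda\chi)$ that lies in $D(J\mathcal E)=D(\mathcal E)=D(\mathcal F_Z)\times\mathcal E(\mathcal G)$ — this is where the structure $\mathcal E=\operatorname{diag}(\mathcal L,I_2)$ and the relation between $D_Z$ and $\mathcal E(\mathcal G)$ must be invoked. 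The second is to articulate why one need only find \emph{some} negative eigenvalue of $\mathcal L$, not control its multiplicity: hypothesis $(S_3)$ hands us exactly $n(\mathcal L)=1$, so there is precisely one such $\lambda_0$, hence precisely one pair $\pm\sqrt{-\lambda_0}$ of real eigenvalues of $J\mathcal E$, and in particular at least one of each sign, which is all the statement claims. No fixed-point or perturbation argument is needed here; the hard analytic work (establishing that the linearized operators $\mathcal L_Z$ really do satisfy $(S_3)$, i.e. $\ker=\{0\}$ and Morse index one) is deferred to the later spectral sections and the splitting lemma, and is not part of this proof.
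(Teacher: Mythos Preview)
Your proposal is correct. The paper does not give a self-contained proof here but defers to Lopes and Angulo--Cavalcante/Angulo--Plaza; for the specific block structure $J\mathcal E=\begin{pmatrix}0&I_2\\-\mathcal L&0\end{pmatrix}$ the argument in those references reduces precisely to the quadratic pencil computation you wrote down: $J\mathcal E(\mathbf y,\mathbf z)^\top=\lambda(\mathbf y,\mathbf z)^\top$ iff $\mathbf z=\lambda\mathbf y$ and $\mathcal L\mathbf y=-\lambda^2\mathbf y$, so the single negative eigenvalue $\lambda_0$ of $\mathcal L$ supplied by $(S_3)$ with eigenfunction $\chi\in D(\mathcal L)$ yields directly the pair $\lambda_\pm=\pm\sqrt{-\lambda_0}$ with eigenvectors $(\chi,\lambda_\pm\chi)$.

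Two minor remarks on your write-up. First, you do not actually need to invoke the symmetry of $\sigma_{\mathrm{pt}}(J\mathcal E)$ to get the negative eigenvalue: your own construction already produces it, since $(\chi,-\sqrt{-\lambda_0}\,\chi)$ is an honest eigenvector for $-\sqrt{-\lambda_0}$. Second, the domain check is indeed routine: $\chi\in D(\mathcal L)=D_Z\subset\mathcal E(\mathcal G)$, so $(\chi,\lambda_\pm\chi)\in D_Z\times\mathcal E(\mathcal G)=D(J\mathcal E)$ in the concrete setting of the paper; in the abstract framework of the theorem one simply uses $D(\mathcal E)=D(\mathcal L)\times L^2(\mathcal G)$ (or the semigroup domain from $(S_1)$), and $\chi\in D(\mathcal L)$ suffices. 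Hypothesis $(S_1)$ is otherwise not needed for the algebraic construction; its role, together with $\sigma_{\mathrm{ess}}(J\mathcal E)\subset i\mathbb R$, is exactly what you identified---to guarantee that a real nonzero $\lambda$ with an $L^2$ eigenvector is an isolated eigenvalue of finite multiplicity.
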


The proof of Theorem \ref{crit} can be seen in \cite{AP1} -Theorem3.2.

It is widely known  that the spectral instability of a specific traveling wave solution of an evolution type model is  a key prerequisite to show their nonlinear instability property (see \cite{GrilSha90} and references therein). Thus we have the following definition.
\begin{definition}\label{nonstab}
A stationary vector solution $\Phi \in D(\mathcal E)$ is said to be \textit{nonlinearly unstable} in $X\equiv H^1(\mathcal G)\times L^2(\mathcal G)$-norm for model sine-Gordon \eqref{sg3} if there is $\epsilon>0$, such that for every $\delta>0$ there exist an initial datum $\bold w_0$ with $\|\Phi -\bold w_0\|_X<\delta$ and an instant $t_0=t_0(\bold w_0)$ such that $\|\bold W(t_0)-\Phi \|_X>\epsilon$, where $\bold W=\bold W(t)$ is the solution of the sine-Gordon model with $\bold W(0)=\bold w_0$.
\end{definition}

\section{Existence  of single-lobe kink state  for the sine-Gordon equation on a tadpole graph}
\label{secInst}

In this section we  show the existence  of static solutions of  single-lobe kink type for the sine-Gordon model determined by a $\delta$-interaction type at the vertex  $\nu=L$ of a tadpole graph (see \eqref{Domain0}). We will find that these profiles exist  as long as we have several restrictions involved  the {\it{a priori}} parameters $c_i$ and $Z$. We start with the strength value $Z$.

\subsection{{\it A priori} restrictions on the parameter $Z$} 

We consider a single-lobe kink state $(\phi_1, \phi_2)$  for  \eqref{trav21}. Then, as $\phi_1( L)=\phi_2 (L)>0$ follows that $\phi_1$ satisfies the following condition, 
 \begin{equation}\label{RCBC}
  \phi'_1( L)-  \phi'_1 (-L)=\Big[ \frac{\phi'_2(L)}{\phi_2(L)} +Z\Big]  \phi_1(L)\equiv \alpha_{_{Z}} \phi_1(L).
 \end{equation}
Next, from the fact that $\phi_1$ is even we have $2\phi_1'(L)=\alpha_{Z} \phi_1(L)$, and from $\phi'_1(L)<0$ we get $\alpha_{Z}<0$. Now, from \eqref{trav22} we obtain for $y=e^ {-a/c_2}$
 \begin{equation}\label{Z-condit}
-Z>\frac{\phi'_2(L)}{\phi_2(L)}=-\frac{1}{c_2} \frac{y}{(1+y^ 2)\arctan(y)}\equiv f(y),\;\;\quad y>0.
 \end{equation}
 Thus, we have the following proposition.
 
 \begin{proposition}
 \label{Z2}
 We consider a single-lobe kink state $(\phi_1, \phi_2)\in D_Z$  for  \eqref{trav21}.  Then, the strength value $Z$ satisfies $Z\in (-\infty, \frac{2}{\pi c_2})$.
 \end{proposition}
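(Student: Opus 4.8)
The plan is to sharpen the necessary condition \eqref{Z-condit} already obtained for a single-lobe kink state $(\phi_1,\phi_2)\in D_Z$. Putting $y=e^{-a/c_2}>0$ and $g(y):=\frac{y}{(1+y^2)\arctan y}$, the strict inequality $\alpha_Z<0$ obtained above reads $Z<-f(y)=\frac1{c_2}\,g(y)$. Hence it suffices to prove two facts: (i) a single-lobe kink state necessarily has $y\geqq 1$; and (ii) $g(y)\leqq\frac2\pi$ for every $y\geqq 1$. Granting both, $Z<\frac1{c_2}g(y)\leqq\frac2{\pi c_2}$, which is precisely $Z\in(-\infty,\frac2{\pi c_2})$.

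For step (i) I would argue through the sign pattern of the nonlinearity. By the very notion of a single-lobe kink state (cf.\ Definition \ref{singlelobe} and the discussion preceding it), $\phi_1$ is positive, non-constant and concave on $[-L,L]$; since it solves $c_1^2\phi_1''=\sin\phi_1$ by \eqref{trav21}, concavity forces $\sin\phi_1\leqq 0$ throughout $[-L,L]$. Thus the range of $\phi_1$, being a nondegenerate interval contained in $\{t>0:\sin t\leqq 0\}=\bigcup_{k\geqq 0}[(2k+1)\pi,(2k+2)\pi]$, lies within one of these intervals. On the other hand the vertex condition $(\phi_1,\phi_2)\in D_Z$ together with the kink formula \eqref{trav22} gives $\phi_1(L)=\phi_2(L)=4\arctan y<2\pi$, which rules out the components with $k\geqq 1$ and forces $\phi_1(L)\geqq\pi$; therefore $4\arctan y\geqq\pi$, that is, $y\geqq 1$ (equivalently $a\leqq 0$).

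Step (ii) is routine calculus. Write $g=y/h$ with $h(y)=(1+y^2)\arctan y$, so that $h'(y)=2y\arctan y+1$ and
\[
g'(y)=\frac{h(y)-yh'(y)}{h(y)^2}=\frac{(1-y^2)\arctan y-y}{\big((1+y^2)\arctan y\big)^2}.
\]
For $y\geqq 1$ we have $(1-y^2)\arctan y\leqq 0$, hence the numerator is $\leqq-y<0$; therefore $g$ is strictly decreasing on $[1,\infty)$ and $g(y)\leqq g(1)=\frac1{2\arctan 1}=\frac2\pi$. Combining this with step (i) as above yields $Z<\frac2{\pi c_2}$, strictly because $\alpha_Z<0$ strictly. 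I expect the main obstacle to be step (i): identifying the admissible values of the kink-shift $a$ requires the structural information on $\phi_1$, namely its positivity and concavity, together with the location of the intervals on which $\sin$ is nonpositive; step (ii) is elementary.
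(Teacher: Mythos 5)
Your step (ii) is correct and in fact supplies a detail the paper only asserts (the monotonicity of $y\mapsto y/((1+y^2)\arctan y)$), and your treatment of the case $y\geqq 1$ (equivalently $a\leqq 0$) coincides with the paper's argument: $-Z>f(y)\geqq f(1)=-\tfrac{2}{\pi c_2}$. The problem is step (i). Definition \ref{singlelobe} does \emph{not} require $\phi_1$ to be concave: it only asks for positivity, a single interior maximum, symmetry, and monotone decrease on $[0,L]$. The paper explicitly constructs single-lobe profiles that are not concave — Proposition \ref{2profile} produces $\phi_{1,k}$ with $\phi_{1,k}''>0$ on $[-L,L]\setminus[-b,b]$, and Proposition \ref{exis2} shows that for $Z=0$ (and suitable $L,c_1,c_2$) these glue with a kink having shift $a>0$ to give a genuine single-lobe kink state in $D_0$. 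For such states $\phi_1(L)=\phi_{2,a}(L)<\pi$ and $y=e^{-a/c_2}<1$, so your conclusion ``$y\geqq 1$ necessarily'' is false, and your proof does not cover these configurations at all. Since for $y<1$ one has $-f(y)=\tfrac{1}{c_2}g(y)>\tfrac{2}{\pi c_2}$, the inequality $Z<-f(y)$ alone no longer yields the claimed bound, so the case $a>0$ cannot simply be absorbed into your scheme; it genuinely requires a separate argument (the paper handles it by using that $f$ is strictly increasing on $(0,1)$ together with a limiting comparison at $y=1$).

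A secondary, smaller point: even where your step (i) applies, the inference ``concavity of $\phi_1$'' is drawn from the abstract's informal description rather than from the stated hypotheses of Proposition \ref{Z2}; if you want to restrict to concave lobes you must say so explicitly and then your proposition is weaker than the one stated. To repair the proof you would need either to add the case $y<1$ (using the continuity condition $\phi_1(L)=4\arctan y$ together with additional structural information on $\phi_1'(L)$, not just the sign of $\alpha_Z$), or to restrict the statement to the concave family of Proposition \ref{1profile}.
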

 \begin{proof}   From \eqref{Z-condit} we get initially that $Z<\frac{1}{c_2} $ because $f(y)>-\frac{1}{c_2}$ for all $y>0$. Now, for $a\leqq 0$ we have $y=e^ {-a/c_2}\geqq 1$. Then $-Z>f(y)\geqq f(1)=-\frac{2}{\pi c_2}$, and so $Z<\frac{2}{\pi c_2}$. For $a>0$, we have  $y<1$ and so $f(y)<f(1)$. Thus, for $a\to 0^+$, $f(y)\to f(1)$, and therefore $f(1)\leqq -Z$. But, for $a=0$, from  \eqref{Z-condit} follows $-Z>f(1)$.
 In conclusion, we need to have the restriction $Z\in (-\infty, \frac{2}{\pi c_2})$.
 \end{proof}  
 
 \subsection{Existence of positive single-lobe states (subluminal librations)} 

In the following we show the existence of positive solutions for the equation
\begin{equation}
\label{1lobe}
-c_1^2 \phi''_1(x) + \sin ( \phi_1(x))=0, \qquad x\in [-L,L],
\end{equation}
with a single-lobe on $[-L,L]$, namely,  $\phi_1$ is even, the maximum of $\phi_1$ is achieved at a single internal point of $[-L,L]$, namely, $x=0$, and $\phi'(x)<0$ on $(0,L]$ (see Figure \ref{fig3} and the phase-plane in Figure \ref{fig4}).
\begin{figure}[h]
 \centering
\includegraphics[angle=0,scale=0.5]{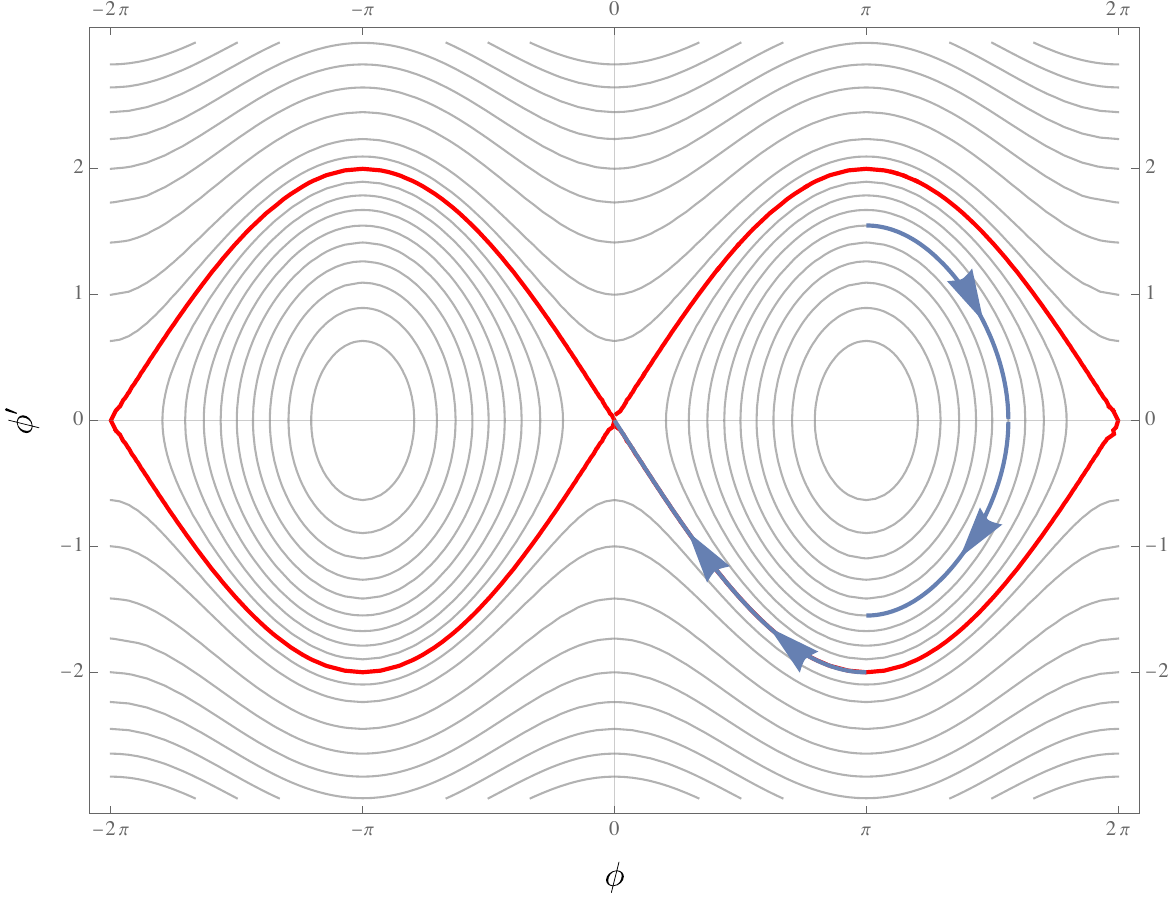}
\caption{\small{Phase-plane of a positive single-lobe kink state for \eqref{trav21}. Here we take $c_1 = c_2 = 1$ and the translation variable $\xi = x - ct$ with $c = 0$ (subluminal waves). The picture shows the phase plane of the solutions $\phi = \phi(\xi)$ for \eqref{trav21}. The separatrices are represented in red solid lines and the librations are the periodic waves inside the separatrices (in light gray). The single-lobe kink profile (depicted with blue thick arrow lines) is composed of a periodic libration, symmetric in $[-L,L]$ and connecting $\phi = \pi$ with itself, glued with a decaying kink connecting $\phi = \pi$ to $\phi = 0$ along a separatrix (color online).} \label{fig4}} 
\end{figure}

The solutions to \eqref{1lobe} of our interest are called {\it subluminal librations}, namely, the profiles inside the separatrix induced by the kink-profile $\phi_2$ in \eqref{trav22} (see Figure \ref{fig4}) which are associated to a traveling speed satisfying $c^2 < c_j^2$, where $c_j$ is the corresponding characteristic speed on each edge $e_j$, which are fixed (see, e.g., Jones \emph{et al.} \cite{JMMP2} for a complete description of subluminal librations). These profiles constitute one of the components of the periodic stationary profiles determined by the sine-Gordon model. Then, by considering the quadratic form associated to \eqref{1lobe} and by integrating by parts once we get from condition $\phi_1'(0)=0$,
\begin{equation}
\label{GF}
-\frac{c_1^2}{2} [\phi'_1(x)]^ 2 - \cos ( \phi_1(x))=- \cos ( \phi_1(0))\Longrightarrow  -\frac{c_1^2}{2} [\phi'_1(x)]^ 2+ 1- \cos ( \phi_1(x))=E, \;\;x\in [-L,L],
\end{equation}
with $E\equiv 1-\cos ( \phi_1(0))$ is so-called energy-level. Thus, being $\phi_1$ a single-lobe profile we need to have  $\phi_1(0)\in (\pi, 2\pi)$ and therefore  $E\in (0, 2)$.

Next, we consider the following change of variables of snoidal-type (see Figure \ref{fig5}),
\begin{equation}
\label{singleL}
 \cos ( \phi_1(x))=-1+\beta \sn^2(\lambda (x-x_0);k),
\end{equation}
where $\text{sn}(\cdot;k)$ represents the Jacobian elliptic function snoidal determined by the parameter $k\in (0,1)$ (called the elliptic modulus) and $x_0, \lambda, \beta \in \mathbb R$ to be determined. Thus, it is not difficult to see from \eqref{GF} that we arrive to the following equality for all $x\in [-L,L]$,
$$
2(E-2)\beta \sn^2+\beta^2[2\beta-(E-2)]\sn^4-\beta^3\sn^6=-2c_1^2\beta^2\lambda^2\sn^2+2\beta^2\lambda^2c_1^2(1+k^2)\sn^4-2\beta^2\lambda^2c_1^2k^2\sn^6,
$$
and therefore $\beta=2-E>0$, $\lambda=\frac{1}{c_1}$, and $k=\sqrt{\frac{2-E}{2}}$.

Now, since $\cos ( \phi_1(0))=1-E$ (see \eqref{GF}) we get  from  \eqref{singleL} that $(2-E)\sn^2(-\frac{x_0}{c_1};k)=2-E$ and therefore
$$
\sn^2(-\frac{x_0}{c_1};k)=1\;\;\text{implies}\;\; x_0=\pm c_1 (2n+1) K(k),\; n\geqq 0,
$$
where $K(k)$ is the complete elliptic integral of the first kind defined by 
\begin{equation}\label{K}
K(k)=\int_0^1 \frac{1}{ \sqrt{(1-t^2)(1-k^2t^2)}} dt,
\end{equation}
with $K'(k)>0$, $K(k)\to \frac{\pi}{2}$ as $k\to 0$ and $K(k)\to \infty$ as $k\to 1$ (see \cite{ByFr71}). We recall that   $\sn(\cdot;k)$ is periodic-odd of period $4K(k)$ and, thus, $\sn^2(\cdot;k)$ is periodic-even of period $2K(k)$. Hence we can choose any sign for defining $x_0$ and  $x_0=c_1K(k)$. Therefore, we get the formula for $\phi_1$ as
\begin{equation}\label{profile}
 \cos ( \phi_1(x))=-1+ 2k^2\sn^2\Big(\frac{x}{c_1} +K(k); k\Big),\quad k\in (0,1). 
 \end{equation}
We note immediately that $ \Phi(x)=\arccos\Big[-1+ 2k^2\sn^2\Big(\frac{x}{c_1} +K(k); k\Big)\Big]$ does not represent the first component of a single-lobe kink profile according to Definition \ref{singlelobe}  (see Figures \ref{anti} and \ref{fig3}). The stability properties of these $\Phi$-profiles and of the anti-kink profiles for the sine-Gordon equation will be studied in a future work.

\begin{figure}[h]
 \centering
\includegraphics[angle=0,scale=0.55]{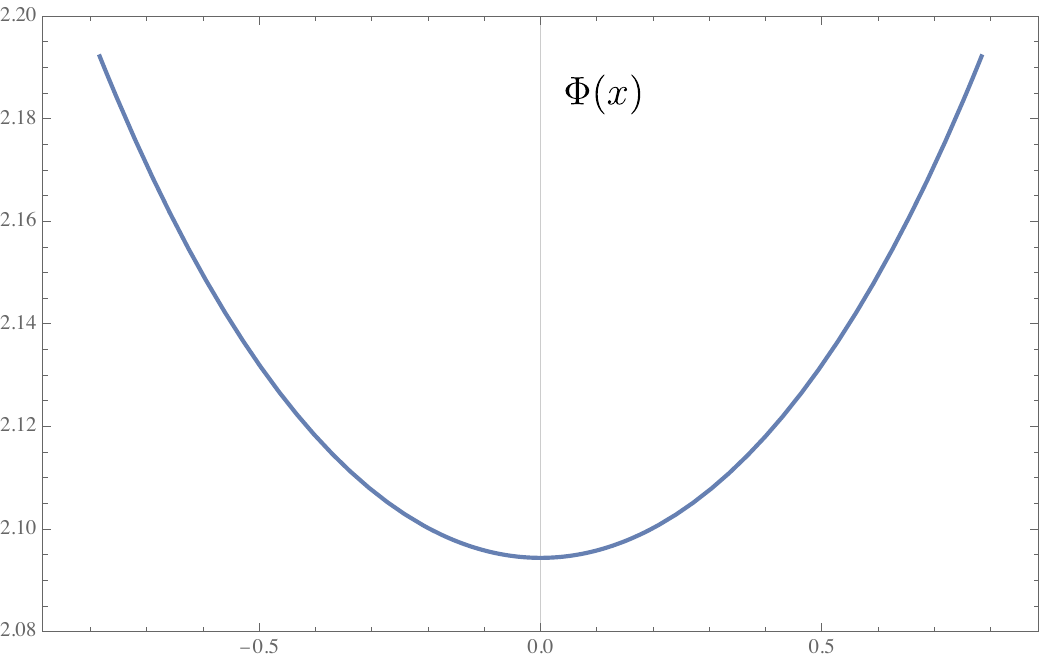}
\caption{\small{Profile for $\Phi = \Phi(x)$  with $x\in [-\frac{\pi}{4}, \frac{\pi}{4}]$, $c_1=1$,  and any $k\in (0,1)$.} \label{anti}} 
\end{figure}

\begin{figure}[h]
\centering
\includegraphics[angle=0,scale=0.55]{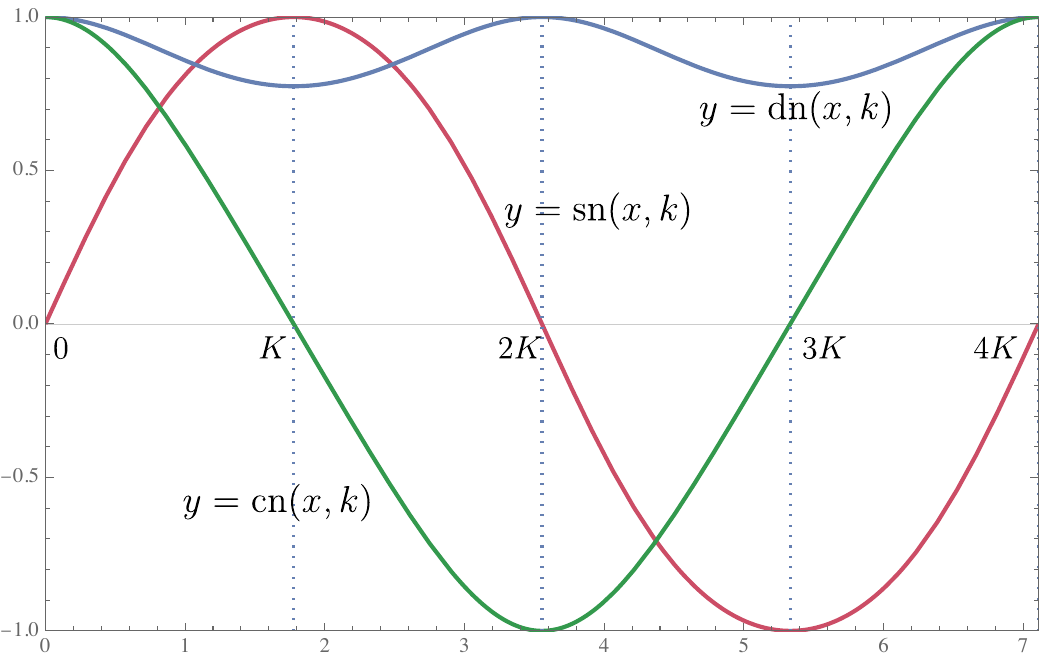}
\caption{\small{Graphs of the elliptic functions $\sn(x,k)$, $\cn(x,k)$ and $\dn(x,k)$ for $x \in [0,4K(k)]$, $k=0.4$.} \label{fig5}} 
\end{figure}

Before establishing our existence result of the single-lobe profile according to Definition \ref{singlelobe}, we obtain some relations between the modulus $k^2=\frac{2-E}{2}$ and the maximum value of {\it a priori} single-lobe $\phi_1$ (we recall $\phi_1(0)\in (\pi, 2\pi)$; see Figure \ref{fig4}):
\begin{enumerate}
\item[(i)] Let $k$ be fixed and  $k^2\leqq \frac12$: $E\in [1, 2)$, $\cos(\phi_1(0))=-1+2k^2\in (-1,0]$ if and only if $\phi_1(0)\in (\pi, \frac{3\pi}{2}]$.
\item[(ii)] For $k$ fixed and  $k^2>\frac12$: $E\in (0, 1)$, $\cos(\phi_1(0))=-1+2k^2\in (0,1)$ if and only if $\phi_1(0)\in (\frac{3\pi}{2}, 2\pi]$.
\end{enumerate}

In the sequel we obtain two explicit formulae for the profile $\phi_1$ in \eqref{profile} being an even single-lobe profile on $[-L,L]$ such that $\phi_1'(x)<0$ for $x\in (0, L)$ and $\phi_1(-L)=\phi_1(L)=\phi_{2,a}(L)$.  As we shall see, these formulae will depend on the position of  $\phi_{2,a}(L)$ with regard to $\pi$, where the shift-parameter $a$ determines the kink-soliton profile in \eqref{trav22} (see Figure \ref{fig3}).

\begin{proposition}
\label{1profile}
 Let  $L$ and $c_1$ be fixed positive constants. We consider {\it a priori} $a<0$ in \eqref{trav22} which determines the kink-soliton profile $\phi_{2,a}$ (therefore $\phi_{2,a}(L)>\pi$).  Suppose that $\phi_1$ satisfies equation in \eqref{1lobe} and $\phi_1(-L)=\phi_1(L)=\phi_{2,a}(L)$. Then, there is a family of solutions  $\phi_1\equiv \phi_{1,k}$ for equation in \eqref{1lobe} of subluminal type  with a single-lobe profile on $[-L, L]$ given by,
 \begin{equation}\label{formula1}
\phi_{1,k}(x)= 2\pi - \arccos\Big[-1+ 2k^2 \sn^2\Big(\frac{x}{c_1} +K(k); k\Big)\Big],\quad x\in [-L,L],  
\end{equation}
 with  modulus $k$ satisfying $K(k)>\frac{L}{c_1}$. More precisely, for $\frac{L}{c_1}\leqq \frac{\pi}{2}$, we have $k\in (0,1)$, and for $\frac{L}{c_1}> \frac{\pi}{2}$, we have $k\in (k_0,1)$, with  $K(k_0)=\frac{L}{c_1}$. The profile $\phi_{1,k}$ satisfies $\phi''_{1,k}(x)<0$ for all $x\in [-L,L]$.
 \end{proposition}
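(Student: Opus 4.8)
The plan is to take as its starting point the first-integral computation carried out just above the statement: any solution $\phi_1$ of \eqref{1lobe} whose derivative vanishes at its interior maximum satisfies \eqref{GF} with energy level $E=1-\cos\phi_1(0)$, and the snoidal substitution \eqref{singleL} forces $\beta=2-E$, $\lambda=1/c_1$, $k^2=(2-E)/2$ and $x_0=c_1K(k)$, leading to \eqref{profile}. Since a single-lobe requires $\phi_1(0)\in(\pi,2\pi)$, one has $E\in(0,2)$ and hence $k\in(0,1)$. So the first step is only to pin down the correct branch of the inverse cosine in inverting \eqref{profile}. Here the hypothesis $a<0$ enters: then $\phi_{2,a}(L)=4\arctan(e^{-a/c_2})\in(\pi,2\pi)$, and because $\phi_1$ is even, decreasing on $(0,L]$, with $\phi_1(0)\in(\pi,2\pi)$ and $\phi_1(L)=\phi_{2,a}(L)>\pi$, the whole image $\phi_1([0,L])$ lies inside $(\pi,2\pi)$. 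On that range $2\pi-\phi_1\in(0,\pi)$, so $\arccos(\cos\phi_1)=2\pi-\phi_1$, and \eqref{profile} yields precisely the formula \eqref{formula1}. I would stress that this is the branch which differs from the companion profile $\Phi(x)=\arccos[-1+2k^2\sn^2(x/c_1+K(k);k)]$, and that the sign of $a$ is exactly what distinguishes the two.

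Next I would verify the qualitative properties of the function defined by \eqref{formula1}. Evenness follows from the identity $\sn(u+K(k);k)=\cn(u;k)/\dn(u;k)$, whose right-hand side is an even function of $u$; hence $\sn^2(x/c_1+K(k);k)$, $\cos\phi_{1,k}(x)$ and $\phi_{1,k}(x)$ are all even. For the single-lobe shape, set $u(x)=x/c_1+K(k)$. On $(0,L]$ one has $u(x)\in(K(k),\,L/c_1+K(k)]$; since $\sn^2(\cdot;k)$ attains its maximum value $1$ at $u=K(k)$ and decreases strictly to $0$ at $u=2K(k)$ — because $\tfrac{d}{du}\sn(u;k)=\cn(u;k)\dn(u;k)<0$ on $(K(k),2K(k))$ — the function $x\mapsto\cos\phi_{1,k}(x)$ is strictly decreasing on $(0,L]$ precisely when $u(L)<2K(k)$, i.e. when $K(k)>L/c_1$, and in that regime $\phi_{1,k}=2\pi-\arccos(\cos\phi_{1,k})$ is strictly decreasing there, so $\phi_{1,k}'(x)<0$ on $(0,L)$; conversely, if $K(k)\leqq L/c_1$ then $\cos\phi_{1,k}$ turns back up before $x=L$ and $\phi_{1,k}$ fails to be monotone, so $K(k)>L/c_1$ is both necessary and sufficient. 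Finally, concavity is immediate from the equation itself: $c_1^2\phi_{1,k}''=\sin\phi_{1,k}<0$ on $[-L,L]$ because $\phi_{1,k}\in(\pi,2\pi)$ throughout.

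It remains to translate $K(k)>L/c_1$ into the stated $k$-ranges and to check the boundary matching. Using the standard properties of the Legendre integral — $K:(0,1)\to(\pi/2,\infty)$ is continuous, strictly increasing, with $K(0^+)=\pi/2$ and $K(1^-)=\infty$ — one gets: if $L/c_1\leqq\pi/2$ then $K(k)>\pi/2\geqq L/c_1$ for all $k\in(0,1)$, hence every $k\in(0,1)$ is admissible; if $L/c_1>\pi/2$ there is a unique $k_0\in(0,1)$ with $K(k_0)=L/c_1$, and $K(k)>L/c_1$ exactly for $k\in(k_0,1)$. To see that for each such $a<0$ the relation $\phi_{1,k}(L)=\phi_{2,a}(L)$ is actually solvable in this range, I would run a continuity/degeneration argument: $k\mapsto\phi_{1,k}(L)$ is continuous, and as $k\to 1^-$ one has $K(k)\to\infty$, $\sn\to\tanh$, so $\phi_{1,k}(L)\to 2\pi$, while as $k$ tends to the lower endpoint of the admissible interval, $u(L)\to 2K(k)$ and $\sn^2(u(L);k)\to 0$, so $\phi_{1,k}(L)\to\pi$; by the intermediate value theorem $\phi_{1,k}(L)$ sweeps all of $(\pi,2\pi)$, which contains $\phi_{2,a}(L)$. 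This yields the claimed family.

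The argument is essentially routine once the branch of $\arccos$ has been identified correctly, so the main obstacle is bookkeeping rather than analysis: one must keep straight that the single-lobe kink component is $2\pi-\arccos(\cdot)$ (not the companion $\Phi$-profile $\arccos(\cdot)$ of Figure \ref{anti}), and one must locate the admissible modulus interval, which is governed precisely by the monotonicity of $\sn^2(\cdot;k)$ on $[K(k),2K(k)]$ and the inequality $K(k)>L/c_1$ — this is where the dichotomy $L/c_1\leqq\pi/2$ versus $L/c_1>\pi/2$, and the threshold $k_0$ with $K(k_0)=L/c_1$, come from.
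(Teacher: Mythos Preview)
Your proof is correct and follows essentially the same approach as the paper: identify the $2\pi-\arccos$ branch from $\phi_1(x)\in(\pi,2\pi)$ (forced by $a<0$), deduce $K(k)>L/c_1$ from the monotonicity of $\sn^2$ on $[K,2K]$, translate this into the two $k$-ranges via the behaviour of $K$, and read off concavity from the equation. The intermediate-value argument for the boundary matching $\phi_{1,k}(L)=\phi_{2,a}(L)$ is extra --- the paper defers that to Proposition~\ref{exis1} --- and your limit ``$u(L)\to 2K(k)$'' at the lower endpoint is only literally correct in the case $L/c_1>\pi/2$ (for $L/c_1\leqq\pi/2$ one instead uses $2k^2\to 0$), but this does not affect what the proposition actually asserts.
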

\begin{proof}
Let $a<0$.  From the boundary condition $\phi_1(L)=\phi_{2,a}(L)>\pi$, we have that for all $x\in [-L,L]$, $2\pi\geqq\phi_1(0)\geqq  \phi_1(x)\geqq \phi_1(L)>\pi$. Thus, from  \eqref{profile} we get the subluminal profile in \eqref{formula1}. 

Next, from  condition $\phi_1(x)>\pi$ on $[-L,L]$,  the property $\sn(2K(k);k)=0$ and \eqref{profile}, we need to have $\frac{x}{c_1} +K<2K$ for all $x\in [0, L]$
(see Figure \ref{fig6a} above). Therefore, {\it a priori} we get that  modulus $k$ satisfies $K(k)>\frac{L}{c_1}$.  Next, for $\frac{L}{c_1}\leqq \frac{\pi}{2}$ we obtain immediate a family of single-lobe solutions  $k\to\phi_{1, k}$ for any $k\in (0,1)$.  For $\frac{L}{c_1}> \frac{\pi}{2}$, we consider $k_0\in (0,1)$ such that $K(k_0)=\frac{L}{c_1}$, then we have the family of single-lobe profiles $k\in (k_0, 1)\to \phi_{1, k}$ defined in \eqref{formula1}. Lastly, from \eqref{1lobe} we get immediately $\phi''_{1,k}(x)<0$ for all $x\in [-L,L]$.
\end{proof}

\begin{remark} We consider $L=\pi$, $c_1=1$, then $K(k)=\pi$ for $k\approx 0.9843\equiv k_0$, and so for $k>k_0$ we get the family of subluminal solutions $k\to \phi_{1, k}$  in \eqref{formula1} with a single-lobe profile on $[-L,L]$. Figure \ref{fig6a} shows the profile of $\phi_{1, k}$ for $k^2= 0.98$. 
\end{remark}

\begin{proposition}\label{2profile}
 Let  $L, c_1$ be fixed positive constants. We consider {\it a priori} $a>0$ in \eqref{trav22} which determines the kink-soliton profile $\phi_{2,a}$ (with $\phi_{2,a}(L)<\pi$).  Suppose that $\phi_1$ satisfies equation in \eqref{1lobe} and $\phi_1(-L)=\phi_1(L)=\phi_{2,a}(L)$. Then, there is a family of solutions  $\phi_1\equiv \phi_{1,k}$ for equation in \eqref{1lobe} of subluminal type  with a single-lobe profile on $[-L, L]$  given by
 \begin{equation}\label{La2}
\phi_{1,k}(x)= \left\{ \begin{array}{ll}
2\pi-\arccos\Big[-1+ 2k^2\sn^2\Big(\frac{x}{c_1} +K; k\Big)\Big],\quad\quad x\in [0,b],\\
\arccos\Big[-1+ 2k^2\sn^2\Big(\frac{x}{c_1} +K; k\Big)\Big], \;\qquad\quad\quad x\in [b, L],
  \end{array} \right.
 \end{equation} 
 with $b=b(k)$ being  unique in $ (0, L)$ such that $\phi_{1,k}(b)=\pi$ (indeed, $b(k)=c_1K(k)$), and with  modulus $k$ satisfying $K(k)<\frac{L}{c_1}$. Therefore, we have,
 \begin{enumerate}
 \item[(a)]  for $\frac{L}{c_1}\leqq \frac{\pi}{2}$ there are no single-lobe profiles for \eqref{1lobe} with the profile in \eqref{La2},
 \item[(b)]  for $\frac{L}{c_1}> \frac{\pi}{2}$, $\phi_{1,k}$ in \eqref{La2} is a  single-lobe profile  for \eqref{1lobe} with $k\in (0, k_0)$, and $k_0$ satisfying $K(k_0)=\frac{L}{c_1}$; and,
  \item[(c)] $\phi''_{1,k}(b)=0$, $\phi''_{1,k}(x)<0$ for $x\in [-b,b]$ and $\phi''_{1,k}(x)>0$ for $x\in [-L, L]/[-b, b]$.
 \end{enumerate}  
   \end{proposition}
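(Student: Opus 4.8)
The plan is to follow the strategy of Proposition~\ref{1profile}, the only structural novelty being that for $a>0$ one has $\phi_{2,a}(L)=4\arctan(e^{-a/c_2})\in(0,\pi)$, so the boundary value now sits \emph{below} $\pi$, whereas a single-lobe profile must satisfy $\phi_1(0)\in(\pi,2\pi)$. Hence $\phi_1$ is forced to cross the level $\pi$, and the representation \eqref{profile} has to be inverted on two different branches of $\arccos$. First I would note that, by the defining monotonicity of a single-lobe profile, $\phi_1$ is strictly decreasing on $[0,L]$ from $\phi_1(0)\in(\pi,2\pi)$ down to $\phi_1(L)=\phi_{2,a}(L)\in(0,\pi)$, so there is a unique $b\in(0,L)$ with $\phi_1(b)=\pi$. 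On $[0,b]$ one has $\phi_1\in(\pi,2\pi)$, whence $\cos\phi_1(x)=\cos\bigl(2\pi-\phi_1(x)\bigr)$ with $2\pi-\phi_1(x)\in(0,\pi)$, and \eqref{profile} yields $\phi_1(x)=2\pi-\arccos[\,\cdot\,]$; on $[b,L]$ one has $\phi_1\in(0,\pi)$ and \eqref{profile} directly gives $\phi_1(x)=\arccos[\,\cdot\,]$. Together with evenness this is exactly \eqref{La2}, and since $[\phi_1'(b)]^2=2(2-E)/c_1^2=4k^2/c_1^2>0$ by the energy relation \eqref{GF}, the two explicit branches glue into a genuine (in fact smooth) solution of \eqref{1lobe} across $x=b$.

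Next I would locate $b$. Evaluating \eqref{profile} at $x=b$ and using $\cos\pi=-1$ forces $\sn^2\bigl(\tfrac{b}{c_1}+K(k);k\bigr)=0$. Since $b>0$ the argument exceeds $K(k)$, and since on $[0,b]$ the profile decreases monotonically while staying in $(\pi,2\pi)$ (where $\cos$ is increasing), the quantity $\sn^2(\tfrac{x}{c_1}+K;k)$ must decrease monotonically from $1$ down to $0$; this pins the argument to $[K(k),2K(k)]$, so $\tfrac{b}{c_1}+K(k)=2K(k)$, i.e.\ $b=c_1K(k)$, and $b\in(0,L)$ reads precisely $K(k)<L/c_1$. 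The dichotomy~(a)--(b) is then immediate from the elementary facts recalled after \eqref{profile}: $K$ is strictly increasing on $(0,1)$ with $K(k)\to\pi/2$ as $k\to0$ and $K(k)\to\infty$ as $k\to1$. Hence $K(k)<L/c_1$ has no solution when $L/c_1\leqq\pi/2$, which is (a); and when $L/c_1>\pi/2$ there is a unique $k_0\in(0,1)$ with $K(k_0)=L/c_1$, and $K(k)<L/c_1$ holds precisely for $k\in(0,k_0)$, which is (b). (One should also check that on $[b,L]$ the argument $\tfrac{x}{c_1}+K$ stays in $[2K,3K)$, so that $\sn^2$ keeps increasing and $\phi_{1,k}$ keeps decreasing — equivalently $L<2c_1K(k)$; together with $b<L$ this confines the admissible modulus to $K(k)\in(L/2c_1,\,L/c_1)$, the lower bound being exactly the single-lobe requirement that the candidate reach $x=L$ before turning back up from its minimum at $x=2c_1K(k)$.)

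Finally, part (c) reads off directly from the ODE \eqref{1lobe} itself: $c_1^2\phi_{1,k}''=\sin\phi_{1,k}$. On $[-b,b]$ one has $\phi_{1,k}\in[\pi,\phi_{1,k}(0)]\subset[\pi,2\pi)$, so $\sin\phi_{1,k}\leqq0$, with equality only at $x=\pm b$ (where $\phi_{1,k}=\pi$), whence $\phi_{1,k}''(\pm b)=0$ and $\phi_{1,k}''<0$ on $(-b,b)$; while on $[-L,-b)\cup(b,L]$ one has $\phi_{1,k}\in(0,\pi)$, so $\sin\phi_{1,k}>0$ and $\phi_{1,k}''>0$. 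Evenness of $\phi_{1,k}$ extends these sign statements to the whole interval. I expect the only genuinely delicate point to be the branch-and-monotonicity bookkeeping of the second step — identifying which zero of $\sn(\cdot;k)$ produces $b$ and confirming that the glued profile \eqref{La2} is honestly single-lobe on all of $[0,L]$; once the admissible window on $K(k)$ is pinned down, the remaining verifications are routine inspection of the elliptic functions.
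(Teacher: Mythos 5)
Your proposal is correct and follows essentially the same route as the paper's proof: locate the unique $b\in(0,L)$ with $\phi_1(b)=\pi$ by monotonicity, select the two branches of $\arccos$ in \eqref{profile} according to whether $\phi_1$ lies above or below $\pi$, pin $b=c_1K(k)$ from the first zero of $\sn^2(\cdot/c_1+K;k)$ past $K$, deduce $K(k)<L/c_1$ from $b<L$ (which yields (a) and (b) from the monotonicity of $K$), and read off (c) from $c_1^2\phi''_{1,k}=\sin\phi_{1,k}$. Your verification that $[\phi_{1,k}'(b)]^2=4k^2/c_1^2>0$, so the two branches glue smoothly, is a welcome detail the paper leaves implicit. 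The one place you go beyond the paper's proof is the parenthetical check that the argument $\tfrac{x}{c_1}+K$ must remain in $[2K,3K)$ on $(b,L]$, i.e.\ $K(k)>L/(2c_1)$, for the second branch of \eqref{La2} to stay strictly decreasing; the paper's proof of this proposition is silent on that point and only confronts it later, in the proof of Proposition \ref{exis2} (where the inclusion $2K<\tfrac{x}{c_1}+K<3K$ is argued for ``admissible'' $k$) and in Remark \ref{3pi} (which concedes that for large $L/c_1$ only some $k\in(0,k_0)$ actually produce monotone single-lobe profiles). Your sharper window $K(k)\in(L/(2c_1),L/c_1)$ is the honest admissibility condition, and flagging it is a genuine improvement in precision rather than a divergence of method.
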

\begin{proof}
From $a>0$, $\phi_1(0)>\pi$ and the boundary condition $\phi_1(L)=\phi_{2,a}(L)<\pi$, we have a unique $b\in (0, L)$, $b=b(k)$, such that $\phi_1(b)=\pi$, and so for $x\in [0, b]$,  $\phi_1(x)\in [\pi, \phi_1(0)]$, and for $x\in [b, L]$,  $\phi_1(x)\in [\phi_1(L), \pi]$. Thus, from  \eqref{profile} we get the single-lobe  profile in \eqref{La2}. Next, from $\phi_{1,k}(b)=\pi$ and the strictly-decreasing property  of $\phi_{1, k}$ on $[0, L]$ we need to have  $b(k)=c_1K(k)$ (see Figure \ref{fig5} above). Therefore, $K(k)=\frac{b}{c_1}<\frac{L}{c_1}$. Moreover, from \eqref{1lobe} we obtain  $\phi''_1(b)=0$, $\phi''_1(x)<0$ for $x\in [0, b)$ and $\phi''_1(x)>0$ for $x\in [b, L]$ (see Figure \ref{fig7} below). 
\end{proof}

\begin{remark}
We consider $L=\pi$, $c_1=1$, then $K(k)=\pi$ for  $k\approx 0.984432\equiv k_0$,  and so for  $k\in (0, k_0)$ we get the family of subluminal librational solutions $k\to \phi_{1, k}$  in \eqref{La2} with a single-lobe  profile on $[-L,L]$ with $b=K(k)$. Figures \ref{fig8a} and \ref{fig9b}  show the parts of $\phi_{1, k}$ in \eqref{La2}  when they are considered on the complete interval  $[0, \pi]$ with $k=0.5$.  Figure \ref{fig7}  shows  the full profile of $\phi_{1, k}$ in \eqref{La2} with $k=0.5$, $b=K(0.5)$ and it which is based on gluing the two components for $\phi_{1, k}$ in \eqref{La2} from the profiles in Figures \ref{fig8a} and \ref{fig9b}. We note that the ``corners'' (namely, the points where the derivative does not exist) in Figures \ref{fig8a} and \ref{fig9b} occur simultaneously at $x= b=K(0.5)$.
\end{remark}

\begin{figure}[h]
\begin{center}
\subfigure[]{\label{fig8a}\includegraphics[scale=0.45]{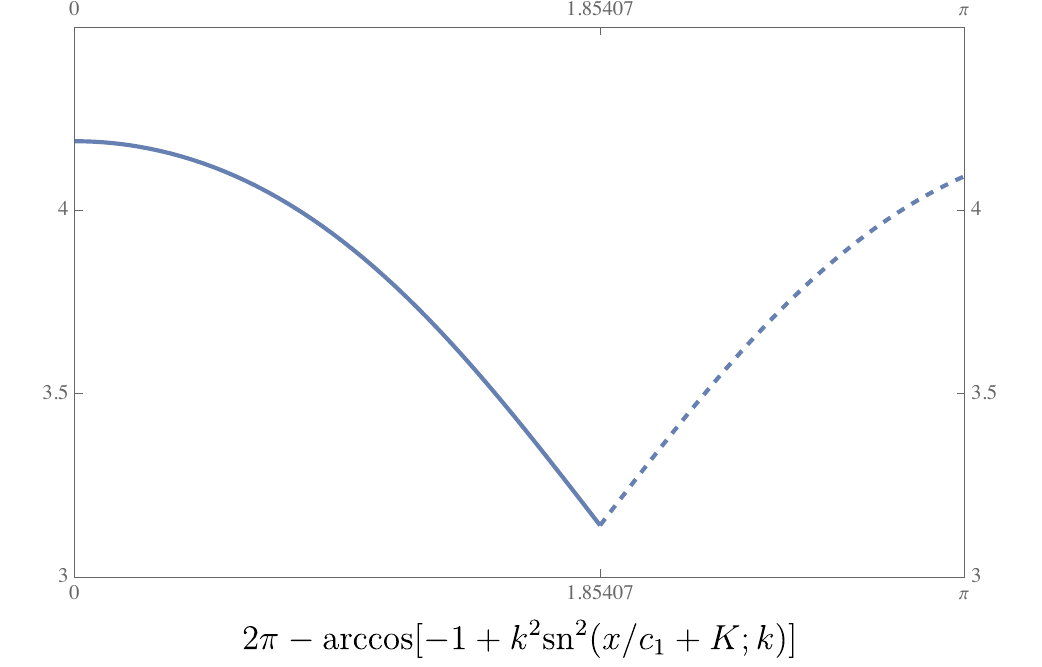}}
\subfigure[]{\label{fig9b}\includegraphics[scale=0.45]{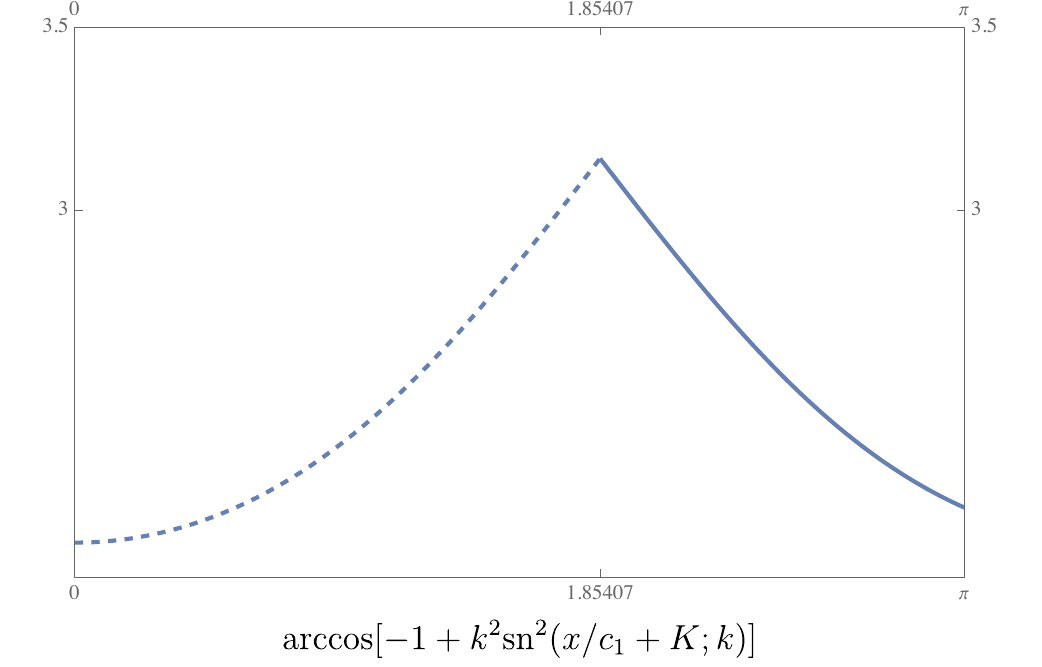}}
\caption{\small{Graph of $\phi_{1, k}$ in \eqref{La2} with $c_1=1$, $k=0.5$, $L=\pi$ and $x\in [0, \pi]$. Here $b \approx 1.85407 \in (0,\pi)$. Panel (a) shows the first part with solid blue line for $x \in [0,b]$. Panel (b) shows the second part with solid blue line for $x \in [b,\pi]$ (color online). }
}
\label{fig89}
\end{center}
\end{figure}


\begin{figure}[h]
 \centering
\includegraphics[angle=0,scale=0.55]{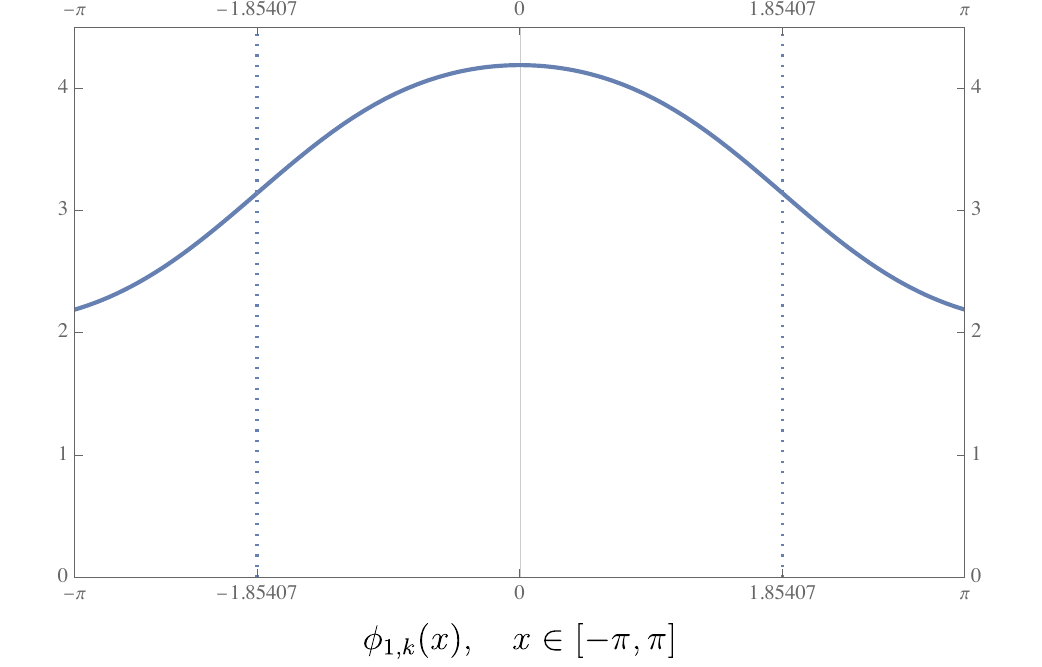}
\caption{\small{Graph of the single-lobe solution (in solid blue line), composed by the extension of $\phi_{1, k}$ in \eqref{La2} with $c_1=1$, $k=0.5$ to the whole interval $x\in [-\pi, \pi]$. The glueing of both solutions occurs at $x =b$ and $x= -b$ with $b \approx 1.85407$ (color online).}}\label{fig7} 
\end{figure}

\begin{proposition}
\label{3profile}
 Let  $L, c_1, c_2$ be fixed positive constants. We consider $a=0$ in \eqref{trav22}. Then, the degenerate  single-lobe kink state $(\pi, \phi_{2,0})$ belongs to $D_Z$ if and only if $Z>0$ with $Z=\frac{2}{\pi c_2}$ (see  Figure \ref{fig8}).
 \end{proposition}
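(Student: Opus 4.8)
The plan is to verify directly that each of the two membership conditions defining $D_Z$ in \eqref{Domain0} holds for the pair $(\pi,\phi_{2,0})$ precisely when $Z=\tfrac{2}{\pi c_2}$. First I would observe that the constant function $\phi_1\equiv\pi$ trivially solves $-c_1^2\phi_1''+\sin\phi_1=0$ on $[-L,L]$ (since $\sin\pi=0$), is even, and belongs to $H^2(-L,L)$; this is the degenerate limit in which the single-lobe profile collapses to a constant. Likewise $\phi_{2,0}(x)=4\arctan\big[e^{-(x-L)/c_2}\big]$ is the decaying kink from \eqref{trav22} with $a=0$, it lies in $H^2(L,\infty)$, is strictly decreasing, and $\phi_{2,0}(x)\to0$ as $x\to\infty$, so both components have the required regularity.

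Next I would check the matching conditions at the vertex $\nu=L$. Since $\arctan(1)=\pi/4$ one gets $\phi_{2,0}(L)=4\arctan(1)=\pi$, and trivially $\phi_1(L)=\phi_1(-L)=\pi$; hence the continuity requirement $\phi_1(L)=\phi_1(-L)=\phi_{2,0}(L)$ in \eqref{Domain0} holds automatically, independently of $Z$. It remains to impose the Kirchhoff-type derivative condition $\phi_1'(L)-\phi_1'(-L)=\phi_{2,0}'(L)+Z\,\phi_{2,0}(L)$. A direct differentiation yields
\[
\phi_{2,0}'(x)=-\frac{4\,e^{-(x-L)/c_2}}{c_2\big(1+e^{-2(x-L)/c_2}\big)},\qquad\text{so}\qquad \phi_{2,0}'(L)=-\frac{2}{c_2}.
\]
Because $\phi_1$ is constant, $\phi_1'(L)-\phi_1'(-L)=0$, and the derivative condition reduces to $0=-\tfrac{2}{c_2}+\pi Z$, that is, $Z=\tfrac{2}{\pi c_2}$. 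Conversely, if $Z=\tfrac{2}{\pi c_2}$ then every requirement in \eqref{Domain0} is satisfied, so $(\pi,\phi_{2,0})\in D_Z$. Finally, since $c_2>0$ this value is strictly positive, which establishes the claim $Z>0$ with $Z=\tfrac{2}{\pi c_2}$; note this is exactly the endpoint of the interval in Proposition \ref{Z2}, attained only in this degenerate case where $\phi_1\equiv\pi$ is not a nondegenerate single-lobe profile in the sense of Definition \ref{singlelobe}.

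As for difficulty, there is essentially no obstacle: the statement is an elementary boundary-matching computation. The only point requiring minimal care is the evaluation $\phi_{2,0}(L)=\pi$ (via $\arctan 1=\pi/4$) together with $\phi_{2,0}'(L)=-2/c_2$, after which the $\delta$-coupling relation pins down $Z$ uniquely and reveals its sign.
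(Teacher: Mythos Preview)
Your proposal is correct and follows essentially the same approach as the paper's proof: evaluate $\phi_{2,0}(L)=\pi$ to verify continuity, compute $\phi_{2,0}'(L)=-2/c_2$, and read off $Z=\tfrac{2}{\pi c_2}$ from the derivative condition in $D_Z$. The paper's version is simply more terse, recording only that $\phi_1\equiv\pi$ is the center solution and that the membership condition forces $Z=\tfrac{2}{\pi c_2}>0$.
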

\begin{proof}
 From $a=0$ and  $\phi_1(L)=\phi_{2,0}(L)=\pi$ we get that $\phi_1(x)\equiv \pi$ is the center solution for \eqref{1lobe}. Thus,  $(\pi, \phi_{2,0}) \in D_Z$ if and only if $Z>0$ with $Z=\frac{2}{\pi c_2}$.
\end{proof}

\begin{figure}[h]
 \centering
\includegraphics[angle=0,scale=0.55]{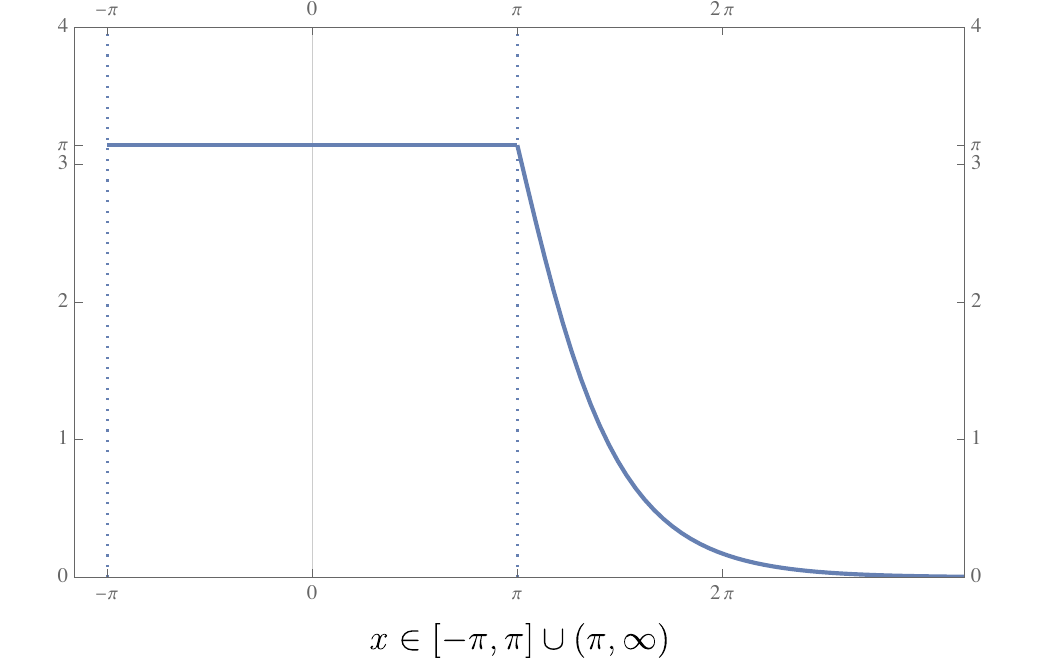}
\caption{\small{Degenerate single-lobe kink state  $(\pi, \phi_{2,0})$ for the sine-Gordon model.}} \label{fig8}
\end{figure}

\subsection{Existence of a single-lobe kink state} 

 In this subsection we determine conditions for the profile  $\Theta=(\phi_1, \phi_2)$ with  $\phi_2$ in \eqref{trav22} and $\phi_1$ in \eqref{formula1} or \eqref{La2} belongs to the $\delta$-interaction domain $D_Z$ in \eqref{Domain0}. Thus, from Propositions  \ref{1profile} and \ref{2profile} we need to determine one expression for  the shift-value  $a=a(k)$ associated to the kink-profile $\phi_2=\phi_{2,a}$. From Proposition \ref{Z2}, we know that  {\it a priori}  the strength $Z$ satisfies $Z\in (-\infty, \frac{2}{\pi c_2})$.

 \begin{proposition}
 \label{exis1}
 Let  $L, c_1$ fixed positive constants.  For $a<0$ we consider the kink-soliton profile $\phi_{2,a}$  in \eqref{trav22} with $\phi_{2,a}(L)>\pi$  and the single-lobe profile $\phi_{1,k}$ in \eqref{formula1}.  
 
 We have a single-lobe kink state, namely $(\phi_{1,k},\phi_{2,a})\in  D_Z$,  in the following two cases:
 \begin{enumerate}
 \item[(1)] Suppose that $\frac{L}{c_1}>\frac{\pi}{2}$. Then for $k_0\in (0,1)$ such that $K(k_0)=\frac{L}{c_1}$, we require,
  \begin{enumerate}
 \item[(i)] for $Z>0$ and $\frac{c_1}{2c_2}\geqq \tanh(\frac{L}{c_1})$ we need to have $Z\in \big(0, \frac{4}{\pi c_1}\big[\frac{c_1}{2c_2}-k_0\big ]\big)$;
  \item[(ii)] for $Z>0$ and $k_0<\frac{c_1}{2c_2}< \tanh(\frac{L}{c_1})$ we need to have $Z\in \big(0, \frac{4}{\pi c_1}\big[\frac{c_1}{2c_2}-k_0\big]\big)$;
 \item[(iii)] for  $Z<0$ and  $\frac{c_1}{2c_2}<k_0$ we need to have $Z\in \big(\frac{4}{\pi c_1}\big[\frac{c_1}{2c_2}-k_0\big], 0\big)$;
 \item[(iv)]  for  $Z<0$ and  $\frac{c_1}{2c_2}=k_0$ there is $m_0<0$ such that we need to have $Z\in (m_0, 0)$;
  \item[(v)]  for  $Z<0$ and  $k_0<\frac{c_1}{2c_2}< \tanh(\frac{L}{c_1})$ there is $m_1<0$ such that we need to have $Z\in (m_1, 0)$; and,
 \item[(vi)]  for $Z=0$ there is  a single-lobe kink state if and only if $\frac{c_1}{2c_2}\in (k_0, \tanh(L/c_1))$.   
   \end{enumerate}
  \item[(2)] Suppose that $\frac{L}{c_1}\leqq \frac{\pi}{2}$. Then we require,
  \begin{enumerate}
 \item[(i)]  $Z\in (0, \frac{2}{\pi c_2})$;
  \item[(ii)]  for $Z<0$ and $\frac{c_1}{2c_2}< \tanh(\frac{L}{c_1})$ there is $m_2<0$ such that we need to have $Z\in (m_2,0)$;
   \item[(iii)] $Z=0$ and $\frac{c_1}{2c_2}<  \tanh(\frac{L}{c_1})$.
 \end{enumerate}
 \end{enumerate}
 
 There is not  single-lobe kink state, namely $(\phi_{1,k},\phi_{2,a})\notin  D_Z$,  in the following  two cases:
 \begin{enumerate}
  \item[(3)] Suppose that $\frac{L}{c_1}>\frac{\pi}{2}$. Then for $k_0\in (0,1)$ such that $K(k_0)=\frac{L}{c_1}$ we have the following conditions:
  \begin{enumerate}
 \item[(i)] for  $Z>0$ and  $\frac{c_1}{2c_2}\leqq k_0$ there is not a single-lobe kink state,
 \item[(ii)] for  $Z<0$ and  $\frac{c_1}{2c_2}\geqq \tanh(\frac{L}{c_1})$ there is not a single-lobe kink state,
\end{enumerate} 
 
  \item[(4)] Suppose $\frac{L}{c_1}\leqq \frac{\pi}{2}$. Then we have the following conditions:
 \begin{enumerate}
  \item[(i)]  for $Z<0$ and $\frac{c_1}{2c_2}\geqq  \tanh(\frac{L}{c_1})$ there is not a single-lobe kink state,
  \item[(ii)]  for $Z=0$ and $\frac{c_1}{2c_2}\geqq  \tanh(\frac{L}{c_1})$ there is not a single-lobe kink state.
 \end{enumerate}
 \end{enumerate} 
\end{proposition}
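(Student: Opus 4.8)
The plan is to reduce the membership $(\phi_{1,k},\phi_{2,a})\in D_Z$ to two scalar equations and then to analyze, as a function of the modulus $k$, the unique value of $Z$ for which they hold. Since $\phi_{1,k}$ is even (Proposition~\ref{1profile}), the requirement $f(-L)=f(L)$ holds automatically and $f'(L)-f'(-L)=2\phi_{1,k}'(L)$, so $(\phi_{1,k},\phi_{2,a})\in D_Z$ is equivalent to $\phi_{1,k}(L)=\phi_{2,a}(L)$ together with $2\phi_{1,k}'(L)=\phi_{2,a}'(L)+Z\,\phi_{2,a}(L)$, exactly as in \eqref{RCBC}. For $a<0$ one has $\phi_{2,a}(L)=4\arctan(e^{-a/c_2})\in(\pi,2\pi)$, and since $\phi_{1,k}$ is single-lobe with $\phi_{1,k}(L)\in(\pi,\phi_{1,k}(0))\subset(\pi,2\pi)$, the first equation determines $a=a(k)$ uniquely via $e^{-a/c_2}=\tan(\phi_{1,k}(L)/4)$; inserting this into the second equation gives
\[
Z=Z(k):=\frac{2\phi_{1,k}'(L)-\phi_{2,a(k)}'(L)}{\phi_{1,k}(L)}.
\]
Thus the proposition reduces to describing the range of the single function $Z(k)$ over the admissible modulus interval $I_k$, which is $(0,1)$ if $L/c_1\le\pi/2$ and $(k_0,1)$, with $K(k_0)=L/c_1$, if $L/c_1>\pi/2$.

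Next I would make $Z(k)$ explicit. From the first integral \eqref{GF} for $\phi_{1,k}$ (energy level $E=1-\cos\phi_{1,k}(0)=2-2k^2$, equivalently $\sin^2(\phi_{1,k}(0)/2)=k'^2$ with $k'=\sqrt{1-k^2}$), from the separatrix first integral $[\phi_2']^2=\tfrac{2}{c_2^2}(1-\cos\phi_2)$ for the kink, and from $\phi_{1,k}'(L)<0$, $\phi_{2,a}'(L)<0$, one obtains $|\phi_{2,a}'(L)|=\tfrac{2}{c_2}\sin(\phi_{1,k}(L)/2)$ and $|\phi_{1,k}'(L)|=\tfrac{2}{c_1}\sqrt{\sin^2(\phi_{1,k}(L)/2)-k'^2}$. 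Applying $\sn(u+K(k);k)=\cn(u;k)/\dn(u;k)$ at $u=L/c_1$ and using $\dn^2-k^2\cn^2=k'^2$ yields $\sin(\phi_{1,k}(L)/2)=k'/\dn(L/c_1;k)$, whence $\sqrt{\sin^2(\phi_{1,k}(L)/2)-k'^2}=\sin(\phi_{1,k}(L)/2)\cdot k\,\sn(L/c_1;k)$. Substituting,
\[
Z(k)=\frac{4\sin(\phi_{1,k}(L)/2)}{c_1\,\phi_{1,k}(L)}\left(\frac{c_1}{2c_2}-g(k)\right),\qquad
g(k):=k\,\sn(L/c_1;k)=\sqrt{1-\dn^2(L/c_1;k)}.
\]
The prefactor is positive, so $\sign Z(k)=\sign\!\big(\tfrac{c_1}{2c_2}-g(k)\big)$.

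With $Z(k)$ in hand I would first extract the sign information, the non-existence statements, and one direction of the existence statements. Since $k\mapsto\dn(L/c_1;k)$ is strictly decreasing with $\dn(L/c_1;\cdot)\to1$ as $k\to0^+$, $\dn(K(k_0);k_0)=k_0'$, and $\dn(L/c_1;1^-)=\sech(L/c_1)$, the function $g=\sqrt{1-\dn^2}$ is a strictly increasing homeomorphism of $I_k$ onto $(0,\tanh(L/c_1))$ (resp.\ onto $(k_0,\tanh(L/c_1))$, also using $k_0<\tanh(L/c_1)$ when $L/c_1>\pi/2$); moreover $Z(k)\to0$ as $k\to1^-$ (because $\sin(\phi_{1,k}(L)/2)=k'/\dn\to0$) while $\phi_{1,k}(L)\to\pi$ at the other endpoint, so $Z(k)$ tends there to $\tfrac{2}{\pi c_2}$ if $L/c_1\le\pi/2$ and to $\tfrac{4}{\pi c_1}\big(\tfrac{c_1}{2c_2}-k_0\big)$ if $L/c_1>\pi/2$. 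Hence: $Z=0$ is attained for some admissible $k$ iff $\tfrac{c_1}{2c_2}$ lies in the open range of $g$ (cases (1)(vi), (2)(iii), and the non-existence in (4)(ii)); if $\tfrac{c_1}{2c_2}\ge\sup g=\tanh(L/c_1)$ then $Z(k)>0$ throughout, excluding negative-$Z$ states (cases (3)(ii), (4)(i)); and if $\tfrac{c_1}{2c_2}\le\inf g=k_0$ — possible only for $L/c_1>\pi/2$ — then $Z(k)<0$ throughout, excluding positive-$Z$ states (case (3)(i)). Finally, the intermediate value theorem applied to the continuous function $Z(k)$ produces a single-lobe kink state for every $Z$ strictly between $0$ and the left-endpoint limit, which already yields the sufficiency direction in (2)(i), (1)(i), (1)(ii) and (1)(iii).

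It remains to prove that these open intervals are the \emph{exact} ranges of $Z$ and to produce the interior constants $m_0,m_1,m_2$ at which $Z$ changes sign. For this I would first establish that $k\mapsto\phi_{1,k}(L)$ is strictly increasing on $I_k$ — equivalently, that $k\mapsto k'/\dn(L/c_1;k)$ is strictly decreasing from $1$ to $0$ — allowing $Z$ to be regarded as a function of $\Phi:=\phi_{1,k}(L)\in(\pi,2\pi)$. On any subinterval where $\tfrac{c_1}{2c_2}-g>0$, the identity $Z=\tfrac{4\sin(\Phi/2)}{c_1\Phi}\big(\tfrac{c_1}{2c_2}-g\big)$ exhibits $Z$ as a product of two positive, strictly decreasing functions of $\Phi$ (the first elementarily, the second because $g$ increases with $\Phi$), hence strictly decreasing; this pins the positive branch of $Z$ to exactly $(0,\tfrac{2}{\pi c_2})$ when $L/c_1\le\pi/2$ (case (2)(i)), to exactly $(0,\tfrac{4}{\pi c_1}(\tfrac{c_1}{2c_2}-k_0))$ when $L/c_1>\pi/2$, $\tfrac{c_1}{2c_2}>k_0$ (cases (1)(i), (1)(ii)), and, by the analogous monotonicity of $|Z|$, to $(\tfrac{4}{\pi c_1}(\tfrac{c_1}{2c_2}-k_0),0)$ in (1)(iii). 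On a subinterval where $\tfrac{c_1}{2c_2}-g<0$ that is flanked by a zero of $Z$ at one or both ends — the endpoint $k\to1^-$, the sign-change point $k_*$ with $g(k_*)=\tfrac{c_1}{2c_2}$, or (when $\tfrac{c_1}{2c_2}=k_0$) the endpoint $k\to k_0^+$ — continuity forces $|Z|$ to attain an interior maximum, so $Z$ sweeps exactly $(m,0)$ with $m<0$ attained inside $I_k$, giving $m_1,m_2$ in (1)(v), (2)(ii) and $m_0$ in (1)(iv). The main obstacle is precisely this monotonicity input: the needed sign of $\partial_k\phi_{1,k}(L)$ (and, in (1)(iii), the monotonicity of $|Z(k)|$) hinges on controlling the $k$-derivatives of $\sn(L/c_1;\cdot)$ and $\dn(L/c_1;\cdot)$, whose signs depend on the position of $L/c_1$ relative to the quarter-period $K(k)$; I expect it to be handled via the quadrature representation $L/c_1=\int_{\phi_{1,k}(L)}^{\phi_{1,k}(0)}\tfrac{d\phi}{\sqrt{2(\cos\phi_{1,k}(0)-\cos\phi)}}$ together with the standard monotonicity of $\dn(\cdot\,;k)$ and $K(\cdot)$, and this is the genuinely delicate step — everything else follows from the explicit formula for $Z(k)$, the endpoint limits, and the intermediate value theorem.
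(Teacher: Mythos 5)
Your proposal follows essentially the same route as the paper: reduce membership in $D_Z$ to the continuity condition (which fixes $a=a(k)$ via $\sech(a/c_2)=k'/\dn(L/c_1;k)$) plus the flux condition, which yields exactly the paper's function $H(k)=\frac{\sech(a/c_2)}{c_1\arctan(e^{-a/c_2})}\big[\frac{c_1}{2c_2}-k\sn(L/c_1;k)\big]$ (your prefactor $\frac{4\sin(\phi_{1,k}(L)/2)}{c_1\phi_{1,k}(L)}$ coincides with it since $\sin(\phi_{1,k}(L)/2)=k'/\dn(L/c_1;k)$ and $\phi_{1,k}(L)=4\arctan(e^{-a/c_2})$), followed by the same endpoint limits, sign analysis of $\frac{c_1}{2c_2}-g(k)$, and intermediate-value/monotonicity arguments. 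The one step you flag as genuinely delicate — the monotonicity of $k\mapsto k'/\dn(L/c_1;k)$ (equivalently $a'(k)<0$) and of $g(k)=k\sn(L/c_1;k)$ on the admissible interval — is precisely the step the paper itself supports only by numerical simulation and a reference to Byrd--Friedman "after long calculations," so your treatment is on the same footing as the published proof.
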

\begin{remark} 
 The proof of Proposition \ref{exis1} shows formulae for $m_0, m_1, m_2$. Since $\tanh(K(k))>k$ for all $k\in (0,1)$, then in the case of   single-lobe kink state existence $(1)(i)$ above we necessarily have $\frac{c_1}{2c_2}-k_0>0$.
\end{remark}

As the proof of Proposition \ref{exis1} is  a bit technical, we will divide it into two lemmas, where the first of these shows that we can to glue the subluminal-profile $\phi_{1,k}$ in \eqref{formula1} with a kink-soliton profile $\phi=\phi_{2,a}$, $a=a(k)$,  at the vertex $\nu=L$, namely,  $\phi_{1,k}(L)=\phi_{2,a}(L)$ and with $k$ satisfying {\it a priori} $K(k)>\frac{L}{c_1}$. Here, we  establish an explicit formula for the  shift parameter $a=a(k)$.  In the second Lemma, we determine the correct values of $k$ and $Z$ such that $(\phi_{1,k}, \phi_{2, a(k)})\in D_Z$, namely, such that we have  the $\delta$-coupling relation $
2\phi'_{1,k}(L)=\phi'_{2,a}(L)+Z\phi_{2,a}(L)$. 

\begin{lemma}
 \label{Pexis1}
 Let  $L, c_1, c_2$ fixed positive constants.  For $a<0$ we consider the kink-soliton profile $\phi_{2,a}$  in \eqref{trav22} with $\phi_{2,a}(L)>\pi$  and the single-lobe profile $\phi_{1,k}$ in \eqref{formula1}.  Then,  $\phi_{1,k}(L)=\phi_{2,a}(L)$ for $a=a(k)$ given by
 \begin{equation}
\label{0form-a}
\sech\Big(\frac{a(k)}{c_2}\Big)=\frac{k'}{\dn(L/c_1;k)},
\end{equation}
where $k'=\sqrt{1-k^2}$.
\end{lemma}

\begin{proof} Let $a<0$ and with $k$ satisfying {\it a priori} $K(k)>\frac{L}{c_1}$. By formulae \eqref{trav22},  \eqref{formula1}, and continuity condition $\phi_{1,k}(L)=\phi_{2,a}(L)$, we obtain the following first formula for the shift parameter $a=a(k)$,
\begin{equation}
\label{1form-a}
e^{-\frac{a}{c_2}}=\frac{\dn(L/c_1;k)+k \cn(L/c_1;k)}{k'},
\end{equation}
where $\dn(\cdot;k)$ and $\cn(\cdot;k)$ represent the Jacobian elliptic functions of dnoidal and cnoidal type, respectively (see Figure \ref{fig5}). Here, $k'$ is defined by $k'=\sqrt{1-k^2}$. In the following, we show formula in \eqref{1form-a}. Indeed, for $\beta\equiv \arctan(e^{-\frac{a}{c_2}})$ we get from \eqref{profile} that 
$$
\cos(4\beta)=-1+ 2k^2sn^2\Big(\frac{L}{c_1} +K; k\Big).
$$
 Then, from $\cos(4\beta)=2\cos^2(2\beta)-1$ follows $k^2 \sn^2(\frac{L}{c_1} +K;k)=\cos^2(2\beta)$. Thus, since $2\beta\in [\frac{\pi}{2}, \pi]$ and $\frac{L}{c_1} +K< 2K$ we get $\cos(2\beta)=-k \sn(\frac{L}{c_1} +K;k)$ (see, for instance, Figure \ref{fig6a}). Moreover, from $\cos^2(2\beta)+\sin^2(2\beta)=1$ and $1-k^2 \sn^2 = \dn^2$ (see \cite{ByFr71}) we also obtain $\sin(2\beta)=\dn(\frac{L}{c_1} +K;k)$. Therefore,
\begin{equation}
\label{form-a2}
e^{-\frac{a}{c_2}} = \tan(\beta)=\frac{1-\cos(2\beta)}{\sin(2\beta)}=\frac{1+k\sn(\frac{L}{c_1} +K;k)}{\dn(\frac{L}{c_1} +K;k)}=\frac{\dn(L/c_1;k)+k \cn(L/c_1;k)}{k'},
\end{equation}
where we are using the relations $\sn(u+K)=\cn(u)/\dn(u)$ and  $\dn(u+K)=k'/\dn(u)$ (cf. Byrd and Friedman \cite{ByFr71}). 
 
 Now, from \eqref{1form-a}  we obtain the following  more manageable relationship between $a=a(k)$ and $k$,
\begin{equation}
\label{2form-a}
\sech\Big(\frac{a(k)}{c_2}\Big)=\frac{k'}{\dn(L/c_1;k)},
\end{equation}
and so we get the property that the  shift-value mapping $k\to a(k)$ will be smooth. {\it Formula \eqref{2form-a} shows that always we can glue a librational profile with a kink-soliton profile because $k'\leqq \dn(x;k)$ for all $x$ and $k$}.
\end{proof}

\begin{lemma}
 \label{exis2}
 Let  $L, c_1, c_2$ fixed positive constants.  For $a<0$ we consider the kink-soliton profile $\phi_{2,a}$  in \eqref{trav22} with $\phi_{2,a}(L)>\pi$  and the single-lobe profile $\phi_{1,k}$ in \eqref{formula1}.  Then,  we have  the $\delta$-coupling  relation 
$$
2\phi'_1(L)=\phi'_2(L)+Z\phi_2(L),
$$ 
with $\phi_1=\phi_{1,k}$, $\phi_2=\phi_{2, a(k)}$, and $a(k)$ satisfying relation in \eqref{2form-a}, provided that the conditions in items (1) and (2) of Proposition \ref{exis1}  hold.  Moreover,  there is not  single-lobe kink state for the sine-Gordon model on a tadpole graph, namely $(\phi_{1,k},\phi_{2,a})\notin  D_Z$,  provided that the conditions in (3) and (4) of Proposition \ref{exis1}  hold. 
\end{lemma}

The proof of Lemma \ref{exis2} is a bit technical,  so we leave this to be proven in Appendix.

In the sequel we formally prove that, for the Neumann-Kirchhoff condition in \eqref{Domain0}, namely, $Z=0$, we get a single-lobe kink state for the sine-Gordon model with a profile on $[-L, L]$ given by formula in \eqref{La2}. For $Z\neq 0$ we found 
technical difficulties in determining the existence of these profiles (see Remark \ref{3pi}).

\begin{proposition}\label{exis3}
Let  $L, c_1$ be fixed positive constants with $\frac{L}{c_1}>\frac{\pi}{2}$. For the subluminal profile $\phi_{1,k}$ in \eqref{La2} with $k\in (0,k_0)$ and $K(k_0)=\frac{L}{c_1}$, there is a shift-value $a = a(k)>0$ associated to the kink-profile $\phi_{2,a}$  in \eqref{trav22} such that $(\phi_{1,k},\phi_{2,a})\in  D_Z$, with $Z=0$, if and only if $c_2$ satisfies
\[
\frac{c_1}{2k_0}< c_2.
\]
 Moreover, $a(k)>0$ is defined  uniquely by relation in \eqref{2form-a}.  For $Z<0$,  there is not a single-lobe kink state with a first-component profile $\phi_{1,k}$ in \eqref{La2} with $c_2$ satisfying $\frac{c_1}{2c_2}\geqq k_0$.  
 \end{proposition}
 
\begin{proof}
We show initially that for $x\in (b, L]$,  $2K<\frac{x}{c_1}+K< 3K$ for every admissible $k$. Indeed, suppose $\beta\in (b,L]$ such that $2K\geqq \frac{\beta}{c_1}+K$ then since $b=c_1K$ follows immediately $2K>\frac{b}{c_1}+K=2K$. Next, for all $k\in (0,k_0)$ we have $\frac{L}{c_1}+K(k)< 3K(k)$, in fact, suppose there is $p\in (0,k_0)$ such that $\frac{L}{c_1}+K(p)> 3K(p)$ then there is $\beta\in (0, p)$ satisfying $\frac{L}{c_1}+K(\beta)= 3K(\beta)$. Thus, from relations \eqref{GF} and \eqref{La2} we get
\begin{equation}
-\frac{c_1^2}{2}[\phi_{1,\beta}'(L)]^2=\cos(\phi_{1,\beta}(L))-\cos(\phi_{1,\beta}(0))=-2\beta^2\cn^2\Big(\frac{L}{c_1}+K(\beta);\beta\Big)=0.
\end{equation}
Therefore, $\phi_{1,\beta}'(L)=0$, which is clearly not possible. We note from the profile of the elliptic function $\sn(\cdot)$ that  $-1 \leqq \sn(\frac{x}{c_1}+K)\leqq 0$ for $x\in (b, L]$ and for $x=L$,  $K<\frac{L}{c_1}<2K$ and hence $\sn(\frac{L}{c_1})>0$ and $\cn(\frac{L}{c_1})<0$.

Next, to show the continuity  condition  $\phi_{1,k}(L)=\phi_{2,a}(L)$ for $a=a(k)>0$ we shall follow the same strategy as in the proof of Lemma \ref{Pexis1}. Indeed, we get the first same expression for the shift $a$ as in \eqref{1form-a} by considering again $\beta\equiv \arctan(e^{-\frac{a}{c_2}})$ but with $\beta\in (0,\frac{\pi}{4}]$. Thus,  from relation   $k^2\sn^2(\frac{L}{c_1} +K;k)=\cos^2(2\beta)$ we
get $\cos(2\beta)=-k\sn(\frac{L}{c_1} +K;k)$ and $\sin(2\beta) = \dn(\frac{L}{c_1} +K;k)$. Thus, we obtain  \eqref{1form-a} with $a>0$ and a similar relation as  in \eqref{2form-a}. Next, we determine the right $k\in (0,k_0)$ such that $2\phi_{1,k}'(L)=\phi'_{2, a(k)}(L)$. Initially we get
\begin{equation}\label{limits2}
 \lim_{k\to 0} \sech\Big(\frac{a(k)}{c_2}\Big)= 1,\quad \lim_{k\to k_0} \sech\Big(\frac{a(k)}{c_2}\Big)=1.
 \end{equation} 
Therefore, $\lim_{k\to 0} a(k)=0$ and $\lim_{k\to k_0} a(k)=0$ ($a(k_0)=0$). We note that, compared to the negative case of $a$, $a=a(k)$ is no longer a strictly increasing function of $k$. In Figure \ref{figj(k)} we show a generic profile of $a(k)$ given by formula in \eqref{a(k)} and we notice that there is always a unique value $k_{\mathrm{crit}}\in (0, k_0)$ such that $a'(k_{\mathrm{crit}})=0$.

\begin{figure}[h]
 \centering
\includegraphics[angle=0,scale=0.55]{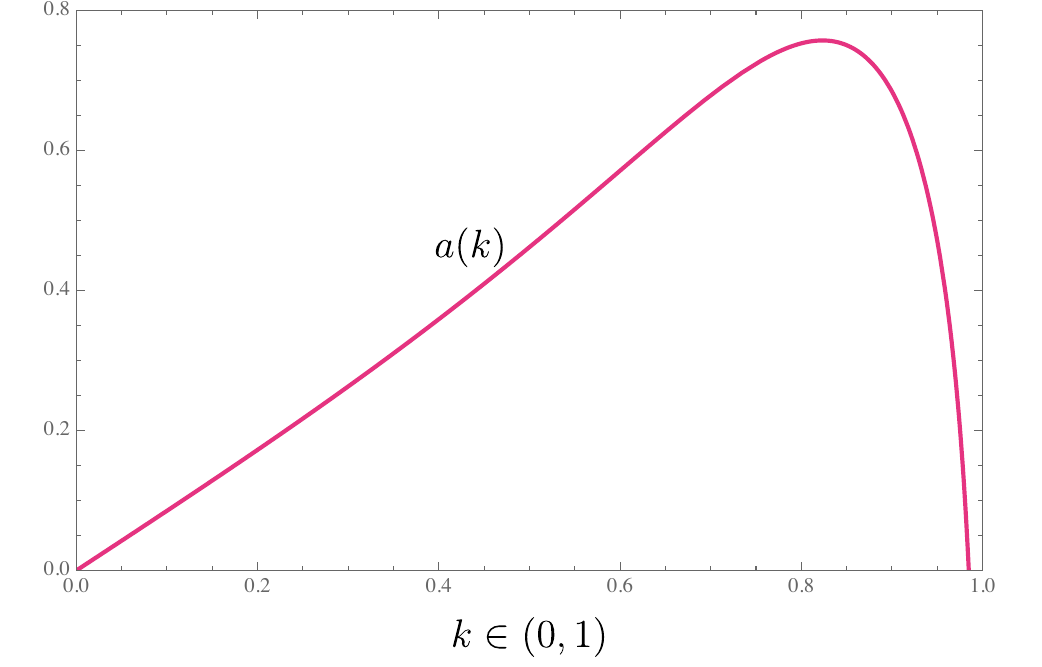}
\caption{\small{Graph of the function $a = a(k)$  for $k\in (0, k_0)$, with $c_1=1$, $L=\pi$ and $k_0 \approx 0.984433$, $K(k_0)=\pi$.} } \label{figj(k)}
\end{figure}

Now, as relations for $\phi_{1,k}'(L)$ and $\phi_{2,a}'(L)$ in \eqref{form-k1} and \eqref{form-k2}, respectively, holds true, we obtain $2\phi_{1,k}'(L)=\phi'_{2, a(k)}(L)$ if and only if $k$ satisfies the following  relation
\begin{equation}\label{k-value}
k\sn\Big(\frac{L}{c_1};k\Big)=\frac{c_1}{2c_2}.
 \end{equation} 
We now study the mapping $k\in (0,k_0)\to F(k)=k\sn\Big(K(k_0);k\Big)$: 
\begin{equation}\label{F}
F(0)=0,\quad \lim_{k\to k_0} F(k)=k_0 \sn\Big(K(k_0);k_0\Big)=k_0, \quad F(k)\leqq k_0.
 \end{equation} 
Therefore, for $c_2$ satisfying $\frac{c_1}{2k_0}\geqq c_2$ there is not  single-lobe kink state for the sine-Gordon model on a tadpole graph. Now, we analyze  the case for  $c_2$ satisfying $\frac{c_1}{2k_0}< c_2$. We note that in general the mapping $k\to F(k)$ has a profile with several oscillations (for instance, for large $L$) with finite simple zeros $\{0, r_1, ..., r_n\}\subset [0, k_0)$ and for $k\in (r_n, k_0)$ this mapping is strictly increasing, so from \eqref{F} there is a unique $k_{c_2}\in (r_n, k_0)$ such that $F(k_{c_2})=\frac{c_1}{2c_2}$. Therefore, we obtain the existence of single-lobe kink profile $(\phi_{1,k_{c_2}},\phi_{2,a(k_{c_2})})\in  D_0$ for the sine-Gordon model. 

Let $Z<0$, then from equation $H(k)=Z$ in \eqref{H} with $H$ defined in \eqref{form-k4} for $k\in (0,k_0)$, we need to have {\it a priori} that $\frac{c_1}{2c_2}<k\sn(L/c_1;k)$, for some $k\in (0,k_0)$. Therefore, for $\frac{c_1}{2c_2}\geqq k_0$ there is not a single-lobe kink state. This finishes the proof.
\end{proof}

\begin{remark}
\label{3pi}
A few observations are in order.
\begin{enumerate}
\item[(a)]  Let us illustrate the case $L=3\pi$, $c_1=1$, with $k_0$ satisfying $K(k_0)=3\pi$ ($k_0 \approx 1$).  Figure \ref{fig19} shows the profile of $F(k)=k\sn\big(K(k_0);k\big)$ for $k\in (0,k_0)$. 
\begin{figure}[h]
\centering
\includegraphics[angle=0,scale=0.5]{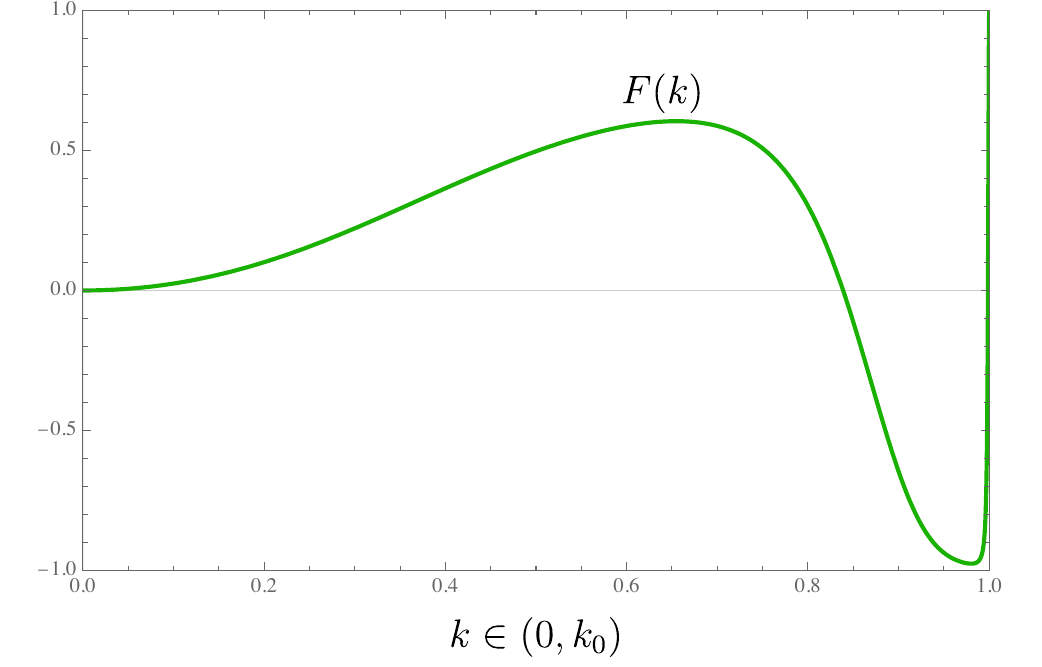}
\caption{\small{$F(k)=k\sn(K(k_0);k)$ for $k\in (0,k_0)$, $K(k_0)=3\pi$.}} \label{fig19}
\end{figure}

Now, $F(k)=0$ if and only if $r_1\approx 0.84$ and $r_2\approx 0.9987$ (see Figures \ref{fig20} and \ref{fig21}). By chosing $c_2=2$, we have that $F(k)=\frac14$ if and only if $k_1\approx 0.32$, $k_2 \approx 0.81$ and $k_3 \approx 0.999$. But the right $k_i$ that produces a single-lobe kink profile ($\phi_{1,k}'(x)<0$ for all $x\in [0, 3\pi]$) will be exactly $k_3 \approx 0.999$. 
\begin{figure}[h]
\begin{center}
\subfigure[]{\label{fig20}\includegraphics[scale=0.45]{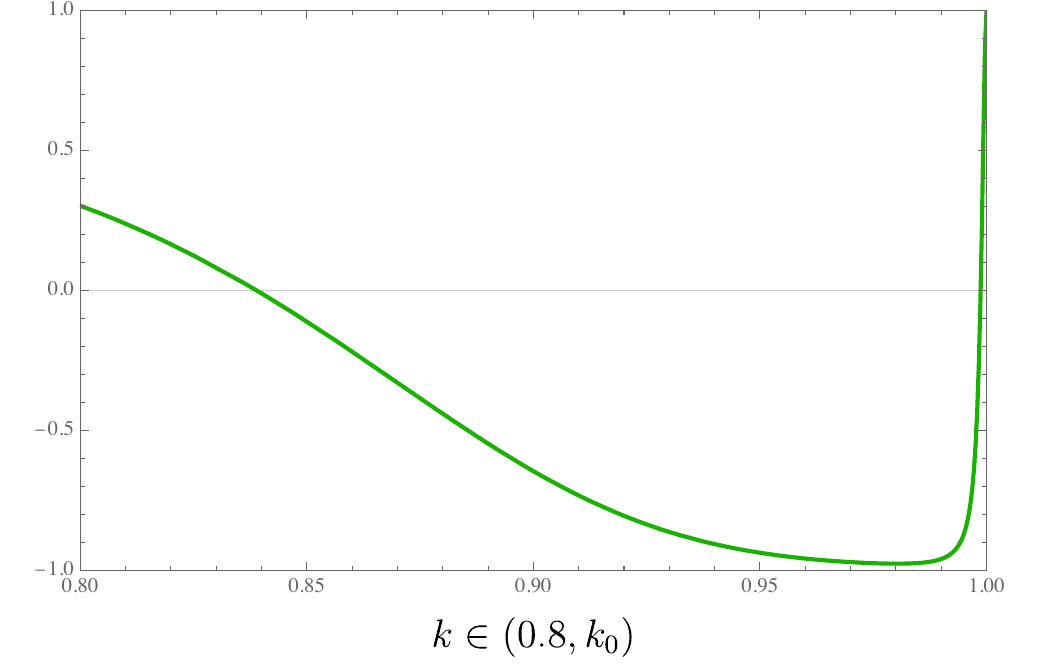}}
\subfigure[]{\label{fig21}\includegraphics[scale=0.45]{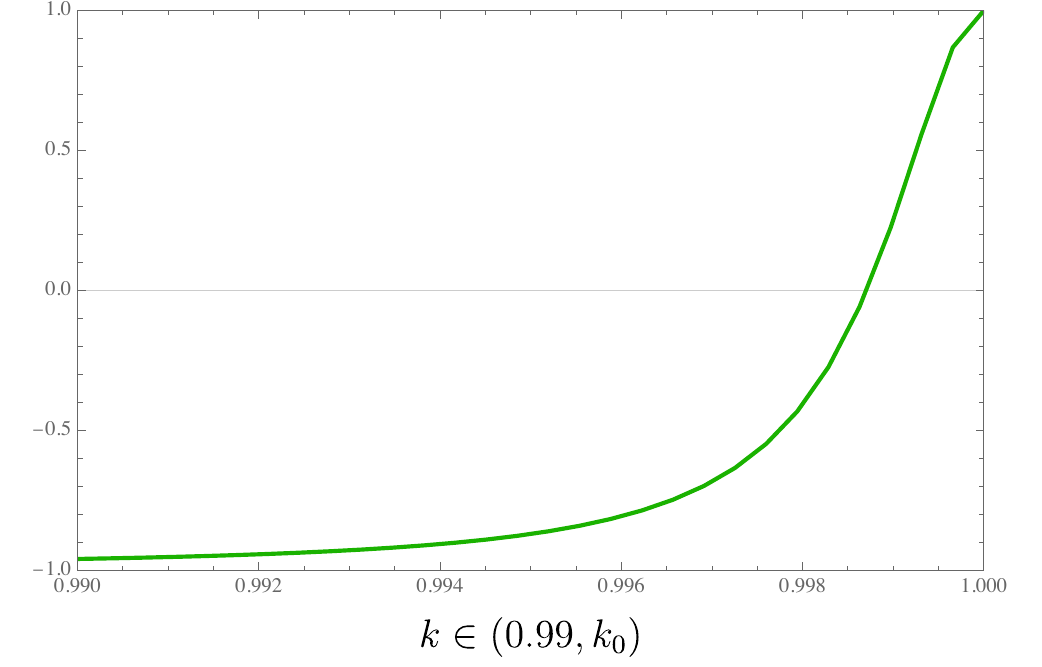}}
\caption{\small{$F(k)=k\sn(K(k_0);k)$ for $k\in (0.8,k_0)$ (panel (a)) and for $k\in (0.99,k_0)$ (panel (b)); here $K(k_0)=3\pi$.  (color online). }
}
\label{fig2021}
\end{center}
\end{figure}


%
In this way, Figures \ref{fig22}, \ref{fig23} and \ref{fig24}  show the profile of $\phi_{1,k_i}(x)$ in \eqref{La2} with $x\in [K(k_i), 3\pi]$.
\begin{figure}[h]
\begin{center}
\subfigure[]{\label{fig22}\includegraphics[scale=0.45]{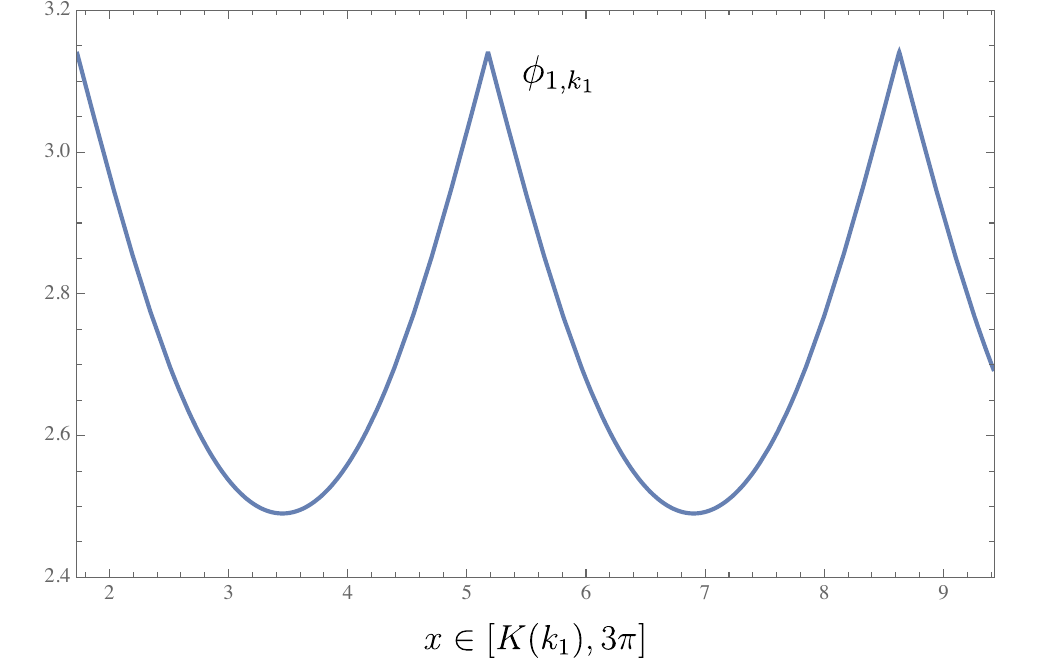}}
\subfigure[]{\label{fig23}\includegraphics[scale=0.45]{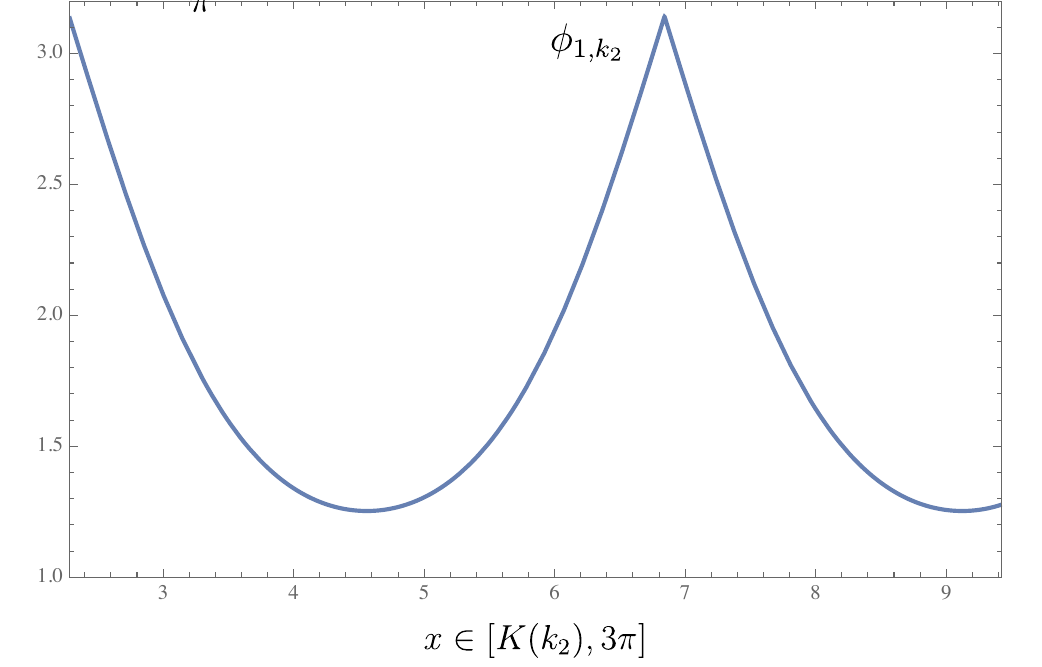}}
\subfigure[]{\label{fig24}\includegraphics[scale=0.45]{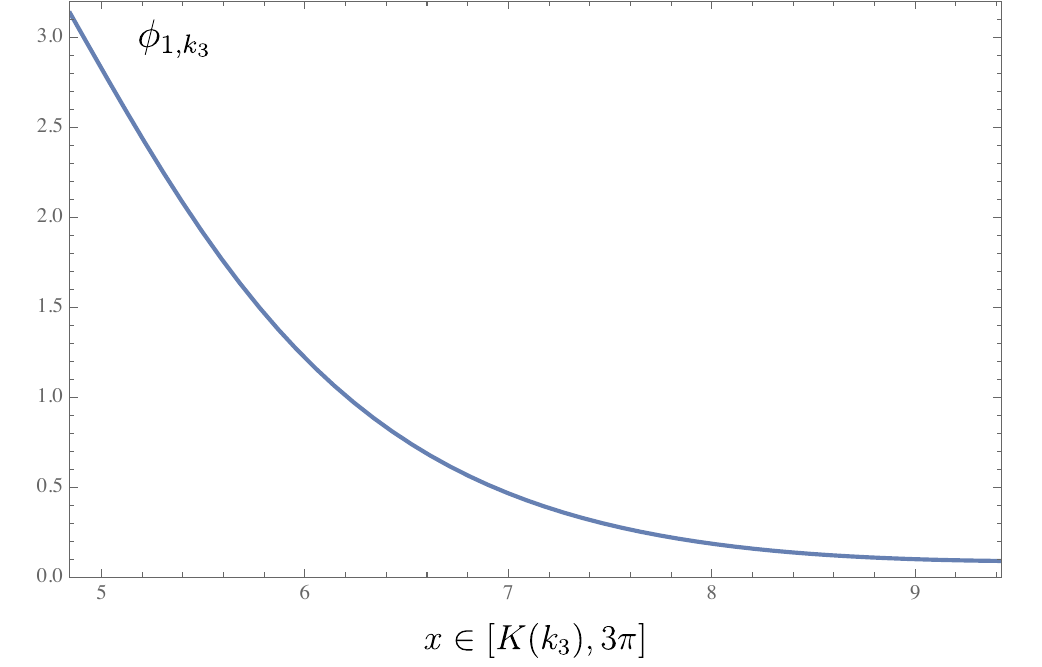}}
\caption{\small{Graph of $\phi_{1,k}$ (in solid blue line) for $k=k_1$ (panel (a)), $k=k_2$ (panel (b)) and $k=k_3$ (panel (c)) in the domains $x \in [K(k_i),3\pi]$, $i =1,2,3$ (color online). }
}
\label{fig222324}
\end{center}
\end{figure}

\item[(b)] For the case $Z<0$  and $c_2$ such that $k_0> \frac{c_1}{2c_2}$, some technical problems arise in studying equation $H(k)=Z$ in \eqref{H} to obtain a general picture of existence of single-lobe kink profile  as was that obtained in  Proposition \ref{exis1}. The problem here is that  the mapping $k\to k\sn(L/c_1;k)$, $k\in (0,k_0)$, may have  a profile with several oscillations that do not allow us to clearly study the mapping $H$ in \eqref{form-k4}. A similar situation is obtained for $Z>0$.
\end{enumerate}
\end{remark}

\section{Spectral structure associated to positive single-lobe kink states} 
\label{secspecstudy}
   
Let us consider $\Theta=(\phi_1,\phi_2)\in D_Z$, an arbitrary positive single-lobe kink state for \eqref{trav21}, and the family of self-adjoint operators $(\mathcal L_Z, D_Z)$,
\begin{equation}
\label{sg5}
\mathcal L_Z=\Big (\Big(-c_j^2\frac{d^2}{dx^2} + \cos(\phi_j)\Big )\delta_{j,k}\Big),\quad 1\leqq j,k\leqq 2, \;\;c_j>0.
\end{equation}
In this section we characterize the structure of the Morse and nullity indices of the diagonal Schr\"odinger operators $\mathcal L_Z$. 

 We start with the following ``moving framework''. For  $(f,g)\in  D_Z$ consider $h(x)=g(x+L)$ for $x>0$. Then $h(0)=g(L)$ and $h'(0)=g'(L)$. Therefore, the eigenvalue problem $\mathcal L_{Z} (f,g)^\top=\lambda (f,g)^\top$ is equivalent to the following system
  \begin{equation}\label{La0}
 \left\{ \begin{array}{ll}
\mathcal L_1f(x)=\lambda f(x),\quad\quad x\in (-L,L),\\
 \mathcal L_{a}h(x)=\lambda h(x), \quad\quad x\in (0, \infty),\\
(f,h)\in D_{Z, 0},
  \end{array} \right.
 \end{equation} 
where
  \begin{equation}\label{L}
\mathcal L_{1}=-c_1^2\partial_x^2+ \cos(\phi_1),\;\;\;\; \mathcal L_{a}\equiv -c_2^2\partial_x^2+ \cos(\psi_a)
 \end{equation} 
 with 
 \begin{equation}\label{akink}
 \psi_{a}(x)= \phi_2(x+L)=4\arctan\Big( e^{-\frac{1}{c_2}(x+a)}\Big), \qquad x>0,
  \end{equation}
 representing the kink-soliton profile for any shift-value $a\in \mathbb R$. Here, 
 \begin{equation}\label{D0}
   D_{Z,0}=\big\{(f,h)\in X^2(-L,L): f(L)=f(-L)=h(0),\; \text{and},\; f'(L)-f'(-L)=h'(0)+Zh(0)\big\}
   \end{equation}
with $X^n(-L, L)\equiv H^n(-L,L)\oplus H^n(0, \infty)$, $n\in \mathbb N$. Naturally, $(\phi_1, \psi_{a})\in D_{Z,0}$.  For convenience of notation, we will use $\mathcal L\equiv ( \mathcal L_{1}, \mathcal L_{a})$.

In what follows, we establish our main strategy in order to study the eigenvalue problem in \eqref{La0}.  More precisely, we will reduce the eigenvalue problem for $(\mathcal L,  D_{Z,0})$  into two classes of eigenvalue problems, one for $\mathcal L_1$ with periodic boundary conditions on $[-L,L]$, and one for the operator $\mathcal L_{a}$ with $\delta$-type boundary conditions on the half-line $(0,\infty)$. Thus, we employ the so-called  {\it {splitting eigenvalue method}} on a tadpole graph (see Angulo \cite{Angloop, Angtad}).   

 \begin{lemma}[splitting eigenvalue method]
 \label{split}  
 Suppose that $(f,g)\in D_{Z,0}$ with $g(0)\neq 0$, satisfies the relation $\mathcal L(f,g)^\top=\lambda (f,g)^\top$ for $\lambda < 0$. Then, we obtain the following two eigenvalues problems
 \begin{equation}\label{La1}
 (PBP)\left\{ \begin{array}{ll}
\mathcal L_1f(x)=\lambda f(x),\quad x\in (-L,L),\\
f(-L)=f(L),\\
 f'(-L)=f'(L),
  \end{array} \right. \quad
 (\delta BP)  \left\{ \begin{array}{ll}
\mathcal L_a g(x)=\lambda g(x),\quad\quad x\in (0,\infty),\\
g'(0+)=-Zg(0+).
  \end{array} \right.  
  \end{equation} 
\end{lemma}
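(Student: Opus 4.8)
The plan is to exploit the symmetry of $\mathcal{L}_1$ on the symmetric interval $[-L,L]$ together with the evenness of the coefficient $\cos(\phi_1)$ — which follows because $\phi_1$ is even for a single-lobe kink state — in order to decouple the boundary data at $x=-L$ and $x=L$ from the matching data at the vertex. First I would observe that, since $\phi_1$ is even, the operator $\mathcal{L}_1$ commutes with the reflection $x\mapsto -x$; hence any solution $f$ of $\mathcal{L}_1 f=\lambda f$ decomposes as $f=f_{\mathrm{even}}+f_{\mathrm{odd}}$, each summand again solving the same ODE. The key point is that the two scalar conditions packaged in $(f,g)\in D_{Z,0}$, namely $f(L)=f(-L)=g(0)$ and $f'(L)-f'(-L)=g'(0)+Zg(0)$, split cleanly under this decomposition: $f(L)=f(-L)$ forces the odd part to vanish at the endpoints, while $f'(L)-f'(-L)$ picks out twice the derivative of the odd part (the even part contributes zero to the jump).

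The second step is to run the argument in the direction actually needed. Suppose $(f,g)\in D_{Z,0}$ satisfies $\mathcal{L}(f,g)^{\top}=\lambda(f,g)^{\top}$ with $g(0)\neq 0$ and $\lambda\leqq 0$. I claim $f$ must in fact be even on $[-L,L]$. The heuristic is that the odd component $f_{\mathrm{odd}}$ solves $\mathcal{L}_1 f_{\mathrm{odd}}=\lambda f_{\mathrm{odd}}$ on $[-L,L]$ with $f_{\mathrm{odd}}(\pm L)$ tied to the single vertex value; combined with $\lambda\leqq 0$ and the structure of $\mathcal{L}_1$ (whose potential $\cos\phi_1$ together with the single-lobe geometry controls the nodal structure of low-lying solutions), one shows the only possibility consistent with $f(L)=f(-L)=g(0)$ and the half-line equation forcing $g$ to decay is $f_{\mathrm{odd}}\equiv 0$. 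With $f$ even, $f'(-L)=-f'(L)$, so the matching conditions become $f(L)=g(0)$ and $2f'(L)=g'(0)+Zg(0)$; but evenness of $f$ already gives, from $\mathcal{L}_1 f=\lambda f$ on the whole of $[-L,L]$, that $f(-L)=f(L)$ and $f'(-L)=f'(L)$ automatically (read off from $f$ being even and periodicity of the extended solution), which is precisely $(PBP)$. Feeding $2f'(L)=g'(0)+Zg(0)$ together with $f'(L)=\tfrac12(f'(L)-f'(-L))$ back in, and using $g(0)=f(L)\neq 0$ to normalize, yields $g'(0)=-Zg(0)$, which is exactly the $\delta$-type condition in $(\delta BP)$.

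Concretely, the steps in order are: (1) record that $\cos\phi_1$ is even, hence $\mathcal{L}_1$ is reflection-invariant and $f=f_{\mathrm{even}}+f_{\mathrm{odd}}$ with each piece a solution; (2) use $f(L)=f(-L)$ (forced by $D_{Z,0}$) to conclude $f_{\mathrm{odd}}(L)=0$, and then invoke $\lambda\leqq 0$ plus a Sturm-type/nodal argument for $\mathcal{L}_1$ to eliminate $f_{\mathrm{odd}}$, so $f$ is even; (3) deduce $f(-L)=f(L)$ and $f'(-L)=f'(L)$, i.e.\ $f$ solves $(PBP)$; (4) substitute evenness into the second $D_{Z,0}$ relation: $f'(L)-f'(-L)=2f'(L)$ on the left, and $2f'(L)$ must equal $g'(0)+Zg(0)$; simultaneously, evaluating $\mathcal{L}_1 f=\lambda f$ and using that a periodic even solution has $f'(L)=f'(-L)$ gives an independent expression forcing the $f'(L)$ contribution to cancel, leaving $g'(0)=-Zg(0)$, i.e.\ $g$ solves $(\delta BP)$; (5) observe $g$ automatically lies in $H^2(0,\infty)$ by hypothesis, so $(\delta BP)$ is the correct half-line problem.

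The main obstacle I anticipate is step (2): rigorously ruling out a nonzero odd component $f_{\mathrm{odd}}$. This is where $\lambda\leqq 0$ and the single-lobe hypothesis on $\phi_1$ are essential — without them an odd eigenfunction vanishing at $\pm L$ could certainly exist (these are the eigenvalues $n^2\pi^2 c_1^2/L^2$ appearing for Dirichlet-type data), so the argument must genuinely use that we are in the non-positive spectral regime and that $g(0)\neq 0$ links the two endpoints through a single value rather than independently. I would handle this by a comparison/oscillation argument: an odd solution on $[-L,L]$ with $\mathcal{L}_1 f_{\mathrm{odd}}=\lambda f_{\mathrm{odd}}$, $\lambda\leqq 0$, restricted to $[0,L]$ is a Dirichlet-type solution at $0$, and for $\lambda\leqq 0$ the potential $\cos\phi_1$ (with $\phi_1\in(\pi,2\pi)$ so $\cos\phi_1$ can be negative, matching the instability) does not admit such a solution also satisfying the vertex coupling with a decaying half-line tail unless it is trivial — one compares against the ground state of $\mathcal{L}_1$ under periodic conditions, whose sign and nodal count are controlled by standard Sturm–Liouville theory on the circle. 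The remaining steps are bookkeeping with the boundary relations and should be routine.
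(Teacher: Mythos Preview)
Your approach contains a genuine gap at steps (3)--(4). Suppose you succeed in step (2) and establish that $f$ is even (which is plausible: $f_{\mathrm{odd}}$ is odd, vanishes at $0$ and at $\pm L$, and a Sturm comparison with $\phi_1'$ --- which is odd, has its only zero at $0$, and satisfies $\mathcal L_1\phi_1'=0$ --- does kill $f_{\mathrm{odd}}$ for $\lambda\leqq 0$). Evenness of $f$ then gives $f'(-L)=-f'(L)$, \emph{not} $f'(-L)=f'(L)$. Consequently $f'(L)-f'(-L)=2f'(L)$, and the $D_{Z,0}$ relation becomes $2f'(L)=g'(0)+Zg(0)$. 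To reach $(PBP)$ and $(\delta BP)$ you must still prove $f'(L)=0$, and nothing in your outline does this: your sentence ``a periodic even solution has $f'(L)=f'(-L)$'' assumes the periodicity you are trying to establish, so the argument is circular. An even $H^2$-solution of $\mathcal L_1 f=\lambda f$ on $[-L,L]$ with $f(L)\neq 0$ has no a priori reason to satisfy $f'(L)=0$; the constraint must come from the coupling to $g$, and you have not extracted it.

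The paper proceeds differently and avoids this trap. It sets $\theta=g'(0)/g(0)+Z$, so that $f$ automatically satisfies the real-coupled self-adjoint problem $(RC_\theta)$: $f(L)=f(-L)$, $f'(L)-f'(-L)=\theta f(L)$. The task is then to show $\theta=0$, which simultaneously yields both $(PBP)$ and $(\delta BP)$. For this the paper invokes Zettl's interlacing theory (Theorem 4.8.1 in \cite{Zettl05}) for the $(RC_\theta)$-eigenvalues $\eta_n$ against the Dirichlet eigenvalues $\mu_n$ and the auxiliary ``Neumann-type'' eigenvalues $\nu_n$ (with boundary conditions $y'(-L)=0$, $\theta y(L)-y'(L)=0$). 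The ground state $u_0$ of the Neumann-type problem is simple and nonvanishing; since the potential is even one argues $u_0$ is even, whence $u_0'(\pm L)=0$, and then the boundary relation $\theta u_0(L)=u_0'(L)=0$ together with $u_0(L)\neq 0$ forces $\theta=0$. The point is that the argument is run on the \emph{auxiliary} ground state $u_0$, not on $f$ itself, and it is the structure of the coupled/separated eigenvalue comparison that pins down $\theta$. Your even/odd decomposition of $f$ recovers part of this information but not the crucial piece $f'(L)=0$, which is precisely what $\theta=0$ encodes.
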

\begin{proof} 
Following the analysis by Angulo \cite{Angloop, Angtad},  for $(f, g)\in D_{Z,0}$ and $g(0)\neq 0$, we have
\[
f(-L)=f(L),\;\;f'(L)-f'(-L)=\Big[\frac{g'(0+)}{g(0+)} +Z\Big] f(L)\equiv \theta f(L),
\]
and so $f$ satisfies the following real coupled problem
\begin{equation}\label{RC0}
\left\{ \begin{array}{ll}
\mathcal L_{1}f(x)=\lambda f(x),\quad\quad x\in (-L,L),\\
 f(L)=f(-L),\\
f'(L)-f'(-L)=\theta f(L).
  \end{array} \right.
 \end{equation} 
Then, by using oscillation theory for real coupled problems (see Theorem 4.8.1 in Zettl \cite{Zettl05}), $\phi_1$ has an even single-lobe profile, Sturm comparison theorem, and $\lambda < 0$, we get $\theta=0$. This finishes the proof.

\end{proof}

\subsection{Morse index for $\mathcal{L}_Z$}

The purpose of this subsection is to characterize the number of negative eigenvalues for $\mathcal L\equiv ( \mathcal L_{1}, \mathcal L_{a})$ with $ \mathcal L_{1}, \mathcal L_{a}$ defined in \eqref{L} and determined by {\it a priori} $(\phi_1, \psi_a)$ positive single-lobe kink state, $\psi_a$ in \eqref{akink} and $a\in \mathbb R$. By the splitting eigenvalue result in Lemma \ref{split}, it arises naturally to study the Morse index for the
family of self-adjoint operators,
\begin{equation}
\label{La}
\mathcal L_a= -c_2^2\partial_x^2+ \cos(\psi_a),
\end{equation}
with $\delta$-type boundary conditions on $(0,\infty)$.

\begin{lemma}
 \label{delta} Consider the operator $\mathcal L_a$ in \eqref{La} on the domain
\[
W_Z=\{g\in H^2(0, \infty): g'(0+)=-Zg(0+)\}.
\]
Then, for $Z\in \mathbb R$ we have that $(\mathcal L_a, W_Z)$ represents a family of self-adjoint operators and that the Morse index of  $\mathcal L_a$, $n(\mathcal L_a)$, satisfies $n(\mathcal L_a)\leqq 1$ for every $Z$.
 \end{lemma}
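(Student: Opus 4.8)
The plan is to analyze $(\mathcal L_a, W_Z)$ by exploiting the explicit form of the potential $\cos(\psi_a)$, where $\psi_a$ is the translated kink in \eqref{akink}. First I would observe that, since $\psi_a(x) = 4\arctan(e^{-(x+a)/c_2}) \to 0$ as $x \to \infty$, the potential satisfies $\cos(\psi_a(x)) \to 1$ exponentially fast, so by the standard theory of one-dimensional Schrödinger operators (Weyl's theorem) the essential spectrum is $[1,\infty)$ regardless of $Z$, and hence $\mathcal L_a$ has only finitely many negative eigenvalues, each of finite multiplicity, and those eigenvalues (if any) are simple since we are on a half-line with a separated (Robin) boundary condition at $x=0$. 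Self-adjointness of $(\mathcal L_a, W_Z)$ for each $Z \in \mathbb R$ is the classical statement that the Schrödinger operator on $(0,\infty)$ with a bounded potential, in the limit point case at $+\infty$, together with the Robin condition $g'(0+) = -Zg(0+)$ at the regular endpoint, is self-adjoint; this follows from Sturm--Liouville theory (see, e.g., Zettl \cite{Zettl05}) and I would simply cite it.

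The heart of the matter is the bound $n(\mathcal L_a) \le 1$. The key structural fact is that the full-line operator $L_\infty := -c_2^2 \partial_x^2 + \cos(\psi_a)$ on $L^2(\mathbb R)$ has the translated kink derivative $\psi_a'$ in its kernel: differentiating the profile equation $-c_2^2 \psi_a'' + \sin\psi_a = 0$ with respect to $x$ gives $-c_2^2 (\psi_a')'' + \cos(\psi_a)\psi_a' = 0$, i.e. $L_\infty \psi_a' = 0$. Since $\psi_a' < 0$ everywhere (the kink is strictly decreasing), $\psi_a'$ is a nowhere-vanishing kernel element, so by Sturm oscillation theory $0$ is the bottom of the spectrum of $L_\infty$, whence $L_\infty \ge 0$ on $H^2(\mathbb R)$. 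Now $\mathcal L_a$ on $W_Z$ is a restriction of (a half-line version related to) $L_\infty$, and I would compare quadratic forms: for $g \in W_Z$, extending suitably, the quadratic form of $\mathcal L_a$ is
\[
\mathfrak{q}(g) = c_2^2 \int_0^\infty |g'|^2\,dx + \int_0^\infty \cos(\psi_a) |g|^2\,dx + c_2^2 Z |g(0)|^2,
\]
and the negativity subspace of this form has dimension at most one. The cleanest route is an interlacing / rank-one perturbation argument: the Dirichlet realization $\mathcal L_a^D$ (with $g(0)=0$) is bounded below by the full-line operator $L_\infty \ge 0$ via odd reflection — indeed odd extension of a Dirichlet eigenfunction yields a test function for $L_\infty$ with the same Rayleigh quotient, forcing $\mathcal L_a^D \ge 0$ and so $n(\mathcal L_a^D) = 0$ — while the family $\mathcal L_a$ on $W_Z$ is obtained from $\mathcal L_a^D$ by varying a single self-adjoint boundary parameter. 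By the standard fact that changing one separated boundary condition at a regular endpoint shifts each eigenvalue by at most one position (a rank-one type perturbation in the resolvent sense; cf. the eigenvalue interlacing in Zettl \cite{Zettl05}, as already used in Lemma \ref{split}), we get $n(\mathcal L_a) \le n(\mathcal L_a^D) + 1 = 1$.

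The main obstacle I anticipate is making the reflection comparison rigorous at the level that $\mathcal L_a^D \ge 0$: one must check that the odd extension across $x=0$ of the potential $\cos(\psi_a)$ does not coincide with $\cos(\psi_a)$ on $(-\infty,0)$, so the reflected operator is \emph{not} literally $L_\infty$ but a full-line Schrödinger operator with a possibly different (still bounded, still $\to 1$ at $\pm\infty$) potential; what one actually needs is only that this reflected operator is nonnegative, which is not automatic. The safe fix is to argue directly on the half-line: $\mathcal L_a^D$ has lowest eigenvalue $\lambda_0^D$ with positive eigenfunction $\chi_0$ (Perron--Frobenius / first-eigenfunction positivity), and one tests the \emph{full-line} form $\langle L_\infty u, u\rangle \ge 0$ against $u = $ (odd extension of $\chi_0$) only to conclude $\lambda_0^D \ge$ (something) — this is delicate. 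Therefore the genuinely robust argument is the abstract one: $\mathcal L_a$ on $W_Z$ is a self-adjoint extension of the minimal operator $\mathcal L_a^{\min}$ with deficiency indices $n_\pm = 1$ (one at the regular endpoint $0$, limit point at $\infty$), hence any two self-adjoint extensions have Morse indices differing by at most $n_\pm = 1$; since the Dirichlet extension is known to be nonnegative once one verifies it via the kernel element $\psi_a'$ restricted appropriately (note $\psi_a'(0) \ne 0$, so $\psi_a'$ is \emph{not} a Dirichlet eigenfunction, but it still certifies $L_\infty \ge 0$ and by a Sturm comparison on $(0,\infty)$ forces at most one negative Dirichlet eigenvalue — in fact none), we conclude $n(\mathcal L_a) \le 1$. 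I would present this extension-theoretic bound, citing the Krein--von Neumann material in Appendix \ref{secApp}, as the clean proof, and relegate the $\psi_a' $ kernel observation to establishing that the Dirichlet problem contributes at most one (and the Robin family then at most two, improved to one by the explicit nonnegativity of the full-line operator).
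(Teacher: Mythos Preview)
Your final approach---extension theory with deficiency indices $n_\pm=1$ plus nonnegativity of a reference extension---is exactly the paper's strategy, so you are on the right track. The paper takes the minimal operator $\mathcal M_0=\mathcal L_a$ on $C_0^\infty(0,\infty)$, notes $n_\pm(\mathcal M)=1$, shows $\mathcal M\ge 0$, and then invokes Proposition~\ref{semibounded} to conclude $n(\mathcal L_a)\le 1$ for every self-adjoint extension $W_Z$.

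Where your write-up falls short is precisely the step you yourself flag as ``delicate'': you never actually establish that the minimal (equivalently, Dirichlet) realization is nonnegative. Your odd-reflection attempt fails for the reason you identify (the reflected potential is not $\cos\psi_a$), and your fallback---``Sturm comparison on $(0,\infty)$ forces at most one negative Dirichlet eigenvalue, in fact none''---is asserted, not argued. It \emph{can} be made to work via a Wronskian computation between $\psi_a'$ and a putative Dirichlet ground state, but you would need to carry that out. The paper bypasses all of this with a one-line factorization: since $\psi_a'$ never vanishes on $(0,\infty)$, one has the identity
\[
\mathcal M f \;=\; -\frac{1}{\psi_a'}\,\frac{d}{dx}\Big[c_2^2\,(\psi_a')^2\,\frac{d}{dx}\Big(\frac{f}{\psi_a'}\Big)\Big],
\]
and integrating by parts against $f\in C_0^\infty(0,\infty)$ gives $\langle \mathcal M f,f\rangle=\int_0^\infty c_2^2(\psi_a')^2\,\big|\tfrac{d}{dx}(f/\psi_a')\big|^2\,dx\ge 0$, the boundary term vanishing because $f(0)=0$. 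This supersymmetric/Darboux factorization is the missing ingredient that turns your plan into a proof; with it, the detours through reflection and interlacing are unnecessary.
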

\begin{proof} We will use the extension theory for symmetric operators (see Appendix D in \cite{Albe}). In fact, by considering the minimal symmetric Schr\"odinger operator,
\[
\mathcal{M}_0\equiv \mathcal L_a,\quad D(\mathcal{M}_0)=C_0^\infty (0,\infty),
\]
we have that the  closure of $\mathcal{M}_0$, denoted by $\mathcal{M}\equiv\mathcal{M}_0$, has deficiency indices $n_{\pm}(\mathcal{M})$, satisfying $n_{\pm}(\mathcal{M})=1$.  By Proposition \ref{11} in Appendix we obtain that the family $(\mathcal L_a, W_Z)$ represents all the self-adjoint extensions of the closed symmetric operator $(\mathcal M, D(\mathcal M))$. Next, we show that  $\mathcal{M}\geqq 0$. Indeed,  from \eqref{trav21} we obtain 
\begin{equation}\label{spec7}
\mathcal Mf(x)=-\frac{1}{\psi'_a(x)} \frac{d}{dx}\Big[c_2^2  (\psi'_a(x))^2 \frac{d}{dx}\Big(\frac{f(x)}{\psi'_a(x)}\Big)\Big], \qquad x>0.
\end{equation}
We note that always we have $\psi'_a\neq 0$ on $[0,\infty)$. Thus for $f\in C_0^\infty (0,\infty)$ we obtain
\begin{equation}\label{spec6}
\begin{split}
\langle \mathcal{M} f, f\rangle&=\int_0^{\infty}c_2^2(\psi'_a)^2\Big|\frac{d}{dx}\Big(\frac{f}{\psi'_a}\Big)\Big|^2dx+ c_2^2f(0)\Big[\frac{f'(0)\psi'_a(0)-f(0)\psi'_a(0)}{\psi'_a(0)}\Big].
\end{split}
\end{equation}
  Due to the condition $f(0)=0$ the non-integral term vanishes, and we get $\mathcal M\geqq 0$. Then, by  Proposition \ref{semibounded} (see Appendix),  we have all the  self-adjoint extensions $(\mathcal L_a, W_Z)$ of $\mathcal M$ satisfy $n(\mathcal L_a)\leqq 1$. This finishes the proof.
\end{proof}

\begin{remark} Notice that it is possible to have, in Lemma \ref{delta}, that $n(\mathcal L_a)=0$ for some $W_Z$-domain. Indeed, by considering $Z_a=- \psi''_a(0)/\psi'_a(0)$ we have $\psi'_a\in W_{Z_a}$ and $\mathcal L_a\psi'_a=0$. Thus, as $\psi'_a<0$ on $[0,\infty)$ follows from Sturm-Liouville oscillation theory on half-lines that zero is the smallest eigenvalue for $(\mathcal L_a, W_{Z_a})$ (note that the essential spectrum for $\mathcal L_a$ is $\sigma_{\mathrm{ess}}(\mathcal L_a)=[1/c_2^2, \infty)$).
\end{remark}

Upon application of Lemmata \ref{split} and \ref{delta} we obtain the following result.

 \begin{lemma}
 \label{morse1} 
 Consider the pair $(\mathcal L, D_{Z,0})$. Suppose that there are $(f_i,g_i)\in D_{Z,0}$ with $g_i(0)\neq 0$, $i=1,2$, such that $\mathcal L(f_i,g_i)^\top=\lambda_i (f_i,g_i)^\top$ with $ \lambda_i<0$. Then, $\lambda_1= \lambda_2$ and $g_1=\alpha g_2$ on $[0,\infty)$.
\end{lemma}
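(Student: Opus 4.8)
The plan is to deduce the statement directly from the splitting eigenvalue method (Lemma \ref{split}) combined with the Morse index bound for the half-line operator $(\mathcal L_a, W_Z)$ established in Lemma \ref{delta}.

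First I would apply Lemma \ref{split} to each pair $(f_i,g_i)$ separately. Since $(f_i,g_i)\in D_{Z,0}$, $g_i(0)\neq 0$ and $\lambda_i<0\leqq 0$, the splitting lemma yields (besides a periodic boundary problem for $f_i$ on $[-L,L]$, which will not be needed) that $g_i\in H^2(0,\infty)$ solves the $(\delta BP)$-problem, i.e. $\mathcal L_a g_i=\lambda_i g_i$ on $(0,\infty)$ together with $g_i'(0+)=-Zg_i(0+)$. In other words, $g_i\in W_Z$ and $\lambda_i$ is a negative eigenvalue of the self-adjoint operator $(\mathcal L_a,W_Z)$ with eigenfunction $g_i$, for $i=1,2$.

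Next I would invoke Lemma \ref{delta}, which gives $n(\mathcal L_a)\leqq 1$: the operator $(\mathcal L_a,W_Z)$ has at most one negative eigenvalue counted with multiplicity. Since $\lambda_1$ and $\lambda_2$ are both negative eigenvalues of $(\mathcal L_a,W_Z)$, this forces $\lambda_1=\lambda_2=:\lambda<0$; moreover the eigenspace of $\mathcal L_a$ associated with $\lambda$ then has dimension exactly one, so $g_1$ and $g_2$ must be collinear, that is, $g_1=\alpha g_2$ on $[0,\infty)$ for some $\alpha\in\mathbb{R}$ (necessarily $\alpha\neq 0$ since $g_i(0)\neq 0$). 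This completes the argument.

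There is no serious obstacle in this lemma: the substantive work was already carried out in Lemmata \ref{split} and \ref{delta}, and the assertion is essentially their combination. The only points requiring a little care are verifying that the hypotheses of the splitting lemma are in force for each index $i$ (all part of the standing assumptions: $(f_i,g_i)\in D_{Z,0}$, $g_i(0)\neq 0$, $\lambda_i\leqq 0$), and observing that the single bound $n(\mathcal L_a)\leqq 1$ simultaneously produces the equality $\lambda_1=\lambda_2$ and the collinearity $g_1=\alpha g_2$.
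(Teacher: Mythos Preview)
Your proof is correct and follows essentially the same approach as the paper: apply Lemma~\ref{split} to each pair to place $g_i$ in $W_Z$ with $\mathcal L_a g_i=\lambda_i g_i$, then use the bound $n(\mathcal L_a)\leqq 1$ from Lemma~\ref{delta} to force $\lambda_1=\lambda_2$ and the collinearity of $g_1,g_2$. The paper phrases the first conclusion as a contradiction (assuming $\lambda_1<\lambda_2$ would give $n(\mathcal L_a)\geqq 2$), but this is the same argument.
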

\begin{proof} 
Suppose $ \lambda_1< \lambda_2$. By Lemma \ref{split} follows $\mathcal L_ag_i= \lambda_i g_i$, $g_i\in W_Z-\{0\}$. Therefore, $n(\mathcal L_a)\geqq 2$, which is a contradiction by Lemma \ref{delta}. Thus, $ \lambda_1= \lambda_2$ and by Lemma \ref{delta} we need to have that $g_1$ and $g_2$ are linearly dependent.
\end{proof}

\begin{remark} We note from Lemma \ref{split} that Lemma \ref{morse1} implies $\mathcal L_1f_i= \lambda_1 f_i$ with $f_i$ satisfying periodic boundary conditions; hence, it is possible for $\lambda_1$ to be a double eigenvalue. 
\end{remark}

\begin{theorem}
\label{morse2} 
Consider the pair $(\mathcal L, D_{Z,0})$ determined by {\it a priori} $(\phi_1, \psi_a)$ positive single-lobe kink state, $a\in \mathbb R$.  Then, the Morse index satisfies $n(\mathcal L)\leqq 1$. Therefore, we have also that for the pair  $(\mathcal L_Z, D_{Z})$, $n(\mathcal L_Z)\leqq 1$.
\end{theorem}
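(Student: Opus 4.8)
The plan is to argue by contradiction from the a priori bound $n(\mathcal L)\leqq 2$ of Theorem \ref{M2}, using Lemma \ref{split} to decouple the bounded edge from the half-line and then exploiting the reflection symmetry of $\phi_1$.

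Two preliminary facts would be isolated. First, the Dirichlet realization of $\mathcal L_a$ on $(0,\infty)$ is the Friedrichs extension of the minimal operator $\mathcal M$, which is nonnegative by the factorization \eqref{spec7} and the identity \eqref{spec6} in the proof of Lemma \ref{delta}; hence $\mathcal L_a$ with the boundary condition $g(0)=0$ has no negative eigenvalue. Second, the Perron-Frobenius result of the Appendix, applied to the connected graph $\cG$, gives: if $n(\mathcal L)\geqq 1$, then $\lambda_0:=\min\sigma(\mathcal L)<0$ is a simple eigenvalue with a componentwise strictly positive eigenfunction $\Phi_0=(\varphi_0,\gamma_0)$; in particular $\gamma_0(0)>0$, since $\gamma_0(0)=0$ would force $\gamma_0\equiv 0$ by the previous fact, contradicting $\gamma_0>0$ on $(0,\infty)$.

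The technical core is the assertion that $\mathcal L_1$ on $(-L,L)$ with Dirichlet boundary conditions has at most one negative eigenvalue, and that when it has exactly one the ground-state eigenfunction $\psi$ is even and strictly positive on $(-L,L)$, so that $\psi'(L)<0<\psi'(-L)$. From this it follows that no nonzero $(f,0)\in D_{Z,0}$ can be a negative eigenvector of $\mathcal L$: such an $f$ would be a negative Dirichlet eigenfunction of $\mathcal L_1$ additionally satisfying $f'(L)=f'(-L)$ (the $D_{Z,0}$ jump condition with $h\equiv 0$), which is incompatible with the above. To prove the assertion I would split $L^2(-L,L)$ into the reflection-even and reflection-odd subspaces, legitimate since $\cos\phi_1$ is even. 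On the odd part, the Jacobi-type factorization of $\mathcal L_1$ built from $\mathcal L_1\phi_1'=0$ (with $\phi_1'<0$, hence nonvanishing, on $(0,L)$) shows the Dirichlet form of $\mathcal L_1$ on $(0,L)$ to be nonnegative, so there are no negative odd Dirichlet eigenvalues. On the even part, equivalently the Neumann-Dirichlet problem on $(0,L)$, the number of negative eigenvalues equals, by the standard Sturm-Liouville oscillation count, the number of zeros in $(0,L)$ of the solution $y_0$ of $\mathcal L_1 y=0$ with $y_0(0)=1$, $y_0'(0)=0$; since $\phi_1'$ is a linearly independent solution of $\mathcal L_1 y=0$ whose only zero in $[0,L)$ is $x=0$ ($\phi_1$ being a single-lobe profile), Sturm separation forces $y_0$ to have at most one zero in $(0,L)$. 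The evenness and positivity of the ground state, when it exists, follow from its simplicity and the strong maximum principle, and give $\psi'(L)<0<\psi'(-L)$. The symmetry reduction is essential here: without it a direct Sturm count on $(-L,L)$ only yields two negative Dirichlet eigenvalues.

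Assembling the pieces, suppose for contradiction $n(\mathcal L)=2$. By Theorem \ref{M2} and the Perron-Frobenius fact the two negative eigenvalues are simple, $\lambda_0<\lambda_1<0$, with $\Phi_0=(\varphi_0,\gamma_0)$, $\gamma_0(0)>0$, and a second eigenfunction $\Phi_1=(\varphi_1,\gamma_1)$. If $\gamma_1(0)\neq 0$, then $\Phi_0$ and $\Phi_1$ are two negative eigenvectors of $\mathcal L$ with non-vanishing half-line component but distinct eigenvalues, contradicting Lemma \ref{morse1}. If $\gamma_1(0)=0$, then the nonnegativity of the Dirichlet realization of $\mathcal L_a$ forces $\gamma_1\equiv 0$, so $\varphi_1$ is a nonzero negative Dirichlet eigenfunction of $\mathcal L_1$ with $\varphi_1'(L)=\varphi_1'(-L)$, contradicting the core assertion. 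Hence $n(\mathcal L)\leqq 1$. Finally, the translation $h(x)=g(x+L)$ introduced at the start of the section is a unitary equivalence between $(\mathcal L_Z,D_Z)$ and $(\mathcal L,D_{Z,0})$, so $n(\mathcal L_Z)=n(\mathcal L)\leqq 1$. I expect the even-subspace zero count to be the main obstacle, since it relies on the symmetry of the single-lobe profile and on a careful Sturm comparison against $\phi_1'$, and one must moreover verify that it survives whether or not $\phi_1'$ vanishes at the endpoints $\pm L$.
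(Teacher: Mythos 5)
Your overall architecture (contradiction from the bound $n(\mathcal L)\leqq 2$ of Theorem \ref{M2}, the splitting Lemma \ref{split}, Lemma \ref{morse1}, and the unitary equivalence of $(\mathcal L_Z,D_Z)$ with $(\mathcal L,D_{Z,0})$) matches the paper's, and two of your technical replacements are sound and arguably cleaner than the original: you rule out half-line components vanishing at the vertex via nonnegativity of the Dirichlet (Friedrichs) realization of $\mathcal L_a$ on $(0,\infty)$, where the paper uses an odd-extension/unfolding argument plus Perron--Frobenius on the line; and you rule out negative eigenvectors of the form $(f,0)$ via the even/odd decomposition of the Dirichlet problem on $(-L,L)$ (odd part nonnegative by the $\phi_1'$-factorization, even part with at most one negative eigenvalue by the oscillation count and Sturm separation against $\phi_1'$, ground state even and positive so $\psi'(L)<0<\psi'(-L)$), where the paper combines a Floquet zero-parity argument, a quadratic-form comparison with the Dirichlet problem, and Sturm oscillation against $\phi_1'$.

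The genuine gap is your second preliminary fact: you invoke ``the Perron--Frobenius result of the Appendix, applied to the connected graph $\cG$'' to obtain simplicity of $\min\sigma(\mathcal L)$ and componentwise positivity of its eigenfunction. The Appendix's Theorem \ref{PFpro} is proved only for the unfolded line operator $\widetilde{\mathcal L}$ with a $\delta$-interaction at the origin, via explicit positivity of its resolvent kernel; no such positivity is established (and it is not obvious) for the tadpole operator $(\mathcal L, D_{Z,0})$, which couples a loop to a half-line with a coupling constant $Z$ of arbitrary sign. Establishing precisely this ground-state positivity and simplicity is the longest step (item (A)) of the paper's own proof, so it cannot be taken as a citation. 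You use the claim in two places: to get $\gamma_0(0)>0$, and to get $\lambda_0<\lambda_1$ (excluding a double negative eigenvalue). Both can be repaired with tools you already have: if $\gamma_0(0)=0$ then $\gamma_0\equiv 0$ by your Dirichlet nonnegativity fact, and $(\varphi_0,0)$ is then excluded by your core assertion; and if $\lambda_0=\lambda_1$ with two independent eigenfunctions both having nonvanishing vertex trace, Lemma \ref{morse1} gives $\gamma_0=\alpha\gamma_1$, so $\Phi_0-\alpha\Phi_1$ is a nonzero negative eigenvector of the form $(f,0)$, again excluded by the core assertion. As written, however, the argument rests on an unproven positivity statement, and you should either supply the repair above or reproduce the paper's step (A).
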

\begin{proof} The proof follows the strategy as in the proof of Theorems 3.1 and 3.2 in \cite{Angloop}. So, by the sake of completeness, we highlight the main points of the analysis. Indeed, without loss of generality, suppose $n(\mathcal L)= 2$  and consider $\lambda_0, \lambda_1$ being the negative eigenvalues (counting multiplicities).
  \begin{enumerate} 
 \item[(I)] Suppose initially $\lambda_0<\lambda_1$.  In the sequel we divide our analysis into several steps:
 \begin{enumerate}
\item[(A)] {\it{Perron-Frobenius property for  $(\mathcal L, D_{Z,0})$}}. For $\lambda_0=\text{inf}\; \sigma (\mathcal L)<0$, let $(\xi_{\lambda_0}, \zeta_{\lambda_0})$ be an associated eigenfunction to $\lambda_0$. Then, $\xi_{\lambda_0}$ and  $\zeta_{\lambda_0}$ can be chosen as positive functions so that, in addition, $\xi_{\lambda_0}$ is even. Indeed, we consider the quadratic form $\mathcal Q_{Z}$ associated to operator $\mathcal  L$ on  $D_{Z,0}$, namely, $\mathcal Q_{ Z}: D(\mathcal Q_{Z})\to \mathbb R$, with
\begin{equation}
\label{Q}
\mathcal Q_{ Z}(\xi, \zeta) =\int_{-L}^L (\xi')^2 + V_{\phi_1} \xi^2 dx + \int_{0}^{\infty} (\zeta')^2 + W_{\psi_a} \zeta^2 dx - Z|\zeta(0)|^2,
\end{equation}
$V_{\phi_1} = \cos(\phi_1) $, $W_{\psi_a}=\cos(\psi_a) $, and   $D(\mathcal Q_{Z})$ defined by 
 \begin{equation}\label{Qa}
   D(\mathcal Q_{Z})=\{(\xi,\zeta)\in X^1(-L,L): \xi(L)=\xi(-L)=\zeta(0)\}.
   \end{equation}
 \begin{enumerate}
\item[(1)] The profile $\zeta_{\lambda_0}$ is not identically zero: indeed, suppose that $\zeta_{\lambda_0}\equiv 0$; then $\xi_{\lambda_0}$ satisfies $\mathcal L_{1}\xi_{\lambda_0}(x)=\lambda_0 \xi_{\lambda_0} (x)$, $ x\in (-L,L)$, $ \xi_{\lambda_0}(L)=\xi_{\lambda_0}(-L)=0$ and $ \xi'_{\lambda_0}(L)=\xi'_{\lambda_0}(-L)$, and so from the Dirichlet condition and Oscillation Theorem of the Floquet theory, we need to have that $\xi_{\lambda_0}$ is odd. Then, by Sturm-Liouville theory there is an eigenvalue $\theta$ for $\mathcal L_{1}$, such that $\theta<\lambda_0$, with associated eigenfunction $ \chi>0$ on $(-L, L)$, and  $\chi(-L)=\chi(L)=0$. Now, let $\mathcal Q_{\mathrm{Dir}}$ be the quadratic form associated to $\mathcal L_{1}$ with Dirichlet domain, namely, $\mathcal Q_{\mathrm{Dir}}: H^1_0 (-L, L)\to \mathbb R$ and 
\begin{equation}\label{Q_d}
\mathcal Q_{\mathrm{Dir}}(f)=\int_{-L}^L (f')^2 + V_{\phi_1} f^2 dx.
 \end{equation} 
Then, $\mathcal Q_{\mathrm{Dir}}(\chi)=\mathcal Q_{Z}(\chi, 0)\geqq \lambda_0\|\chi\|^2$ and so, $\theta\geqq \lambda_0$. This is a contradiction. 

 \item[(2)]  $\zeta_{\lambda_0}(0)\neq 0$:  suppose  $\zeta_{\lambda_0}(0)=0$ and we consider the odd-extension $\zeta_{\mathrm{odd}}$ for $\zeta_{\lambda_0}$, and the even-extension $\psi_{\mathrm{even}}$ of the kink-profile $\psi_a$ on all the line. Then, $\zeta_{\mathrm{odd}}\in H^2(\mathbb R)$ and $\psi_{\mathrm{even}}\in H^2(\mathbb R-\{0\})\cap H^1(\mathbb R)$. Next, we consider the unfold operator 
\begin{equation}\label{Leven}
\widetilde{\mathcal L}=-c_2^2\partial_x^2 + \cos(\psi_{\mathrm{even}}),
 \end{equation} 
 on the $\delta$-interaction domain 
\begin{equation}\label{Ddelta}
   D_{\delta, \gamma}=\{f\in H^2(\mathbb R-\{0\})\cap H^1(\mathbb R): f'(0+)-f'(0-)=\gamma f(0)\},
   \end{equation}
for any $\gamma \in \mathbb R$. Then, from the extension theory for symmetric operators we have that the family $(\widetilde{\mathcal L}, D_{\delta, \gamma})_{\gamma \in \mathbb R}$ represents all the self-adjoint extensions of the symmetric operator $(\mathcal N_0, D(\mathcal N_0))$ defined by $
\mathcal N_0= \widetilde{\mathcal L}$, $D(\mathcal N_0)=\{f\in H^2(\mathbb R): f(0)=0\}$, 
because the deficiency indices of $(\mathcal N_0, D(\mathcal N_0))$ are given by $n_{\pm}(\mathcal N_0)=1$ (see Albeverio {\it et al.} \cite{Albe} and Proposition \ref{11} in Appendix below). Now, the even tail-profile $\psi_{\mathrm{even}}$  satisfies $\psi_{\mathrm{even}}'(x)\neq 0$ for all $x\neq 0$, and so from the relation
\begin{equation}\label{M0}
\mathcal N_0 f=-\frac{1}{\psi'_{\mathrm{even}}}\frac{d}{dx}\Big[c_2^2(\psi'_{\mathrm{even}})^2 \frac{d}{dx}\Big ( \frac{f}{\psi'_{\mathrm{even}}}\Big)\Big],\quad x\in \mathbb R-\{0\},
\end{equation}
we can see easily that $\langle \mathcal N_0 f, f\rangle\geqq 0$ for all $f\in D(\mathcal N_0)$. Then, from Proposition \ref{semibounded} in Appendix, we obtain that the Morse index for the family $(\widetilde{\mathcal L}, D_{\delta, \gamma})$ satisfies $n(\widetilde{\mathcal L})\leqq 1$, for all $\gamma \in \mathbb R$. Next, since $\zeta_{\mathrm{odd}}\in D_{\delta, \gamma}$, for any $\gamma$, and $\widetilde{\mathcal L} \, \zeta_{\mathrm{odd}}=\lambda_0 \zeta_{\mathrm{odd}}$ on $\mathbb R$, we have $n(\widetilde{\mathcal L})= 1$ and  $\lambda_0$ will be the smallest negative eigenvalue for $\widetilde{\mathcal L}$ on $\delta$-interactions domains. Then, by Theorem \ref{PFpro} in  Appendix  (Perron Frobenius property for $\widetilde{\mathcal L}$), $\zeta_{\mathrm{odd}}$ needs to be positive which is a contradiction. Therefore, $\zeta_{\lambda_0}(0)\neq 0$.

  \item[(3)] $\zeta_{\lambda_0}$ and  $\xi_{\lambda_0}$ are positive profiles: from $\zeta_{\lambda_0}(0)\neq 0$ and Lemma \ref{split}, the pair $(\zeta_{\lambda_0}, \lambda_0)$ satisfies the $(\delta BP)$-boundary problem in \eqref{La1} and so by Lemma \ref{delta} we obtain $n(\mathcal L_a)=1$. Therefore, by oscillation theory we can choose $\zeta_{\lambda_0}>0$ on $(0, \infty)$ (indeed, we can use the Perron-Frobenius property in Theorem \ref{PFpro} for the unfold operator $(\widetilde{\mathcal L}, D_{\delta, -Z})$ in \eqref{Leven2}-\eqref{Ddelta2} and also deduce the positive property of $\zeta_{\lambda_0}$). Next, we study $\xi_{\lambda_0}$. From Lemma \ref{split}, the pair $(\xi_{\lambda_0}, \lambda_0)$ satisfies the $(P BP)$-boundary problem in \eqref{La1}. We denote by  $\eta_0$ the first eigenvalue (simple)  for (PBP) in  \eqref{La1}, then $\lambda_0\geqq \eta_0$. {\it We are going to show that} $\lambda_0= \eta_0$. Indeed, we consider the quadratic form, $\mathcal Q_{\mathrm{per}}: H_{\mathrm{per}}^1(-L,L)\to \mathbb R$,
\begin{equation}\label{QRC}
\mathcal Q_{\mathrm{per}}(h)=\int_{-L}^L \big( (h')^2 + V_{\phi_1} h^2 \big) \, dx.
 \end{equation} 
Next, for $h\in H_{\mathrm{per}}^1(-L,L)$ fixed, define $\chi=\nu \zeta_{\lambda_0}$ with $\nu\in \mathbb R$ being chosen such that $\chi(0)=\nu \zeta_{\lambda_0}(0)=h(L)$. Then, $(h,\chi)\in D(\mathcal Q_{ Z})$ in \eqref{Qa}. Now, by using that $\mathcal L_{a} \zeta_{\lambda_0}=\lambda_0 \zeta_{\lambda_0}$ on $(0, \infty)$, we get immediately that
\begin{equation}\label{QQ}
\begin{aligned}
\mathcal Q_{\mathrm{per}}(h)
&= \mathcal Q_{ Z} (h,\chi)- \langle \mathcal L_{a} \chi, \chi\rangle = \mathcal Q_{ Z} (h,\chi) - \lambda_0\|\chi\|^2\\
&\geqq \lambda_0 [\|h\|^2+ \|\chi\|^2]-\lambda_0  \|\chi\|^2=\lambda_0  \|h\|^2.
\end{aligned} 
 \end{equation} 
Then, $\eta_0\geqq \lambda_0 $ and so $\eta_0= \lambda_0 $. Therefore, from Floquet theory follows that we can choose $\xi_{\lambda_0}$ positive on $[-L,L]$. Even more, since  $V_{\phi_1}$ is even, we obtain that $\xi_{\lambda_0}$ is even.
 \end{enumerate}

 \item[(B)] Let  $(f_{{1}}, g_{1}) \in D_{Z, 0}$ be  an associated eigenfunction for $\lambda_{{1}}$.  We shall prove that $g_{1}(0+)\neq 0$.
 \begin{enumerate}
\item[(1)] Suppose that $g_{{1}}\equiv 0$: then $f_{{1}}(-L)=f_{{1}}(L)=0$, $f'_{{1}}(-L)=f'_{{1}}(L)$, and so  $f_1$ will be odd (see step (1) in item (A) above). Now, our single-lobe profile  $\phi_1$ satisfies   $\mathcal L_1 \phi'_1(x)=0$,  for $x\in (-L,L)$, $\phi'$ is odd, $\phi'(x)>0$ for $x\in [-L, 0)$,  thus since $\lambda_{1}<0 $ we obtain  from the Sturm oscillation theorem that there is $r\in (-L, 0)$ such that $\phi'(r)=0$, it which is a contradiction. Then, $g_{1}$ is non-trivial. 

\item[2)] Now suppose $g_{1}(0+)=0$:  we consider the  odd-extension $g_{1, \mathrm{odd}}\in H^2(\mathbb R)$ of $g_{1}$ on all the line and the extension operator $\widetilde{\mathcal L}$ in \eqref{Leven} on the $\delta$-interaction domains $D_{\delta, \gamma}$ in \eqref{Ddelta}. Then, $g_{1, \mathrm{odd}}\in D_{\delta, \gamma}$ for any $\gamma$ and by Perron-Frobenius property (Theorem \ref{PFpro}) we will obtain  $n( \widetilde{\mathcal L})\geqq 2$, it which is a contradiction because $n( \widetilde{\mathcal L})\leqq 1$ for all $\gamma$ (see item (A)-(2) above).  
\end{enumerate}  
\item[(C)] By items (A),  (B) and Lemma \ref{morse1} we have that $\lambda_{1}=\lambda_{0}$, which is a contradiction.
\end{enumerate}   
\item[(II)] Suppose $\lambda_0$ is a double eigenvalue: $\lambda_0=\lambda_1$. Then by Perron-Frobenius property in item (I)-(A) there holds $\xi_{\lambda_0}, \zeta_{\lambda_0}>0$. Now, for $(f_1, g_1) \in D_{Z, 0}$  an associated eigenfunction for $\lambda_1$, we have $(f_1, g_1) \perp (\xi_{\lambda_0}, \zeta_{\lambda_0})$. Next, by  item (I)-(B),  Lemma \ref{split} and $\lambda_0=\eta_0$, being $\eta_0$ the first eigenvalue (simple) for (PBP) in  \eqref{La1}, we have that $f_1=r\xi_{\lambda_0}$ and $g_1=s \zeta_{\lambda_0}$ with $r=s\neq 0$ by continuity at zero. Hence, we arrive to a contradiction from the orthogonality property of the eigenfunctions. This finishes the proof of the Theorem.
\end{enumerate}  
\end{proof}

Next, we will give sufficient conditions for obtaining $n(\mathcal L_Z)= 1$, with $\mathcal L_Z$ defined in \eqref{sg5} and $(\phi_1, \phi_2)\equiv (\phi_{1,k},  \phi_a)$, with the  single-lobe state $\phi_{1,k}$  in \eqref{formula1} or \eqref{La2}. Initially, we consider the mapping $\mathcal T(\theta)=\theta \cos\theta - \sin \theta$ for $\theta\in [0,2\pi]$. Then $\mathcal T(\theta)\leqq 0$ for $\theta\in [0, \theta_0]$ where $\theta_0\approx 4.4934$ is the unique zero of $\mathcal T$ on the interval $(0, 2\pi)$. We note that $\theta_0\in (\pi, \frac{3\pi}{2})$ (see Figure \ref{fig9}). 

\begin{figure}[h]
 \centering
\includegraphics[angle=0,scale=0.55]{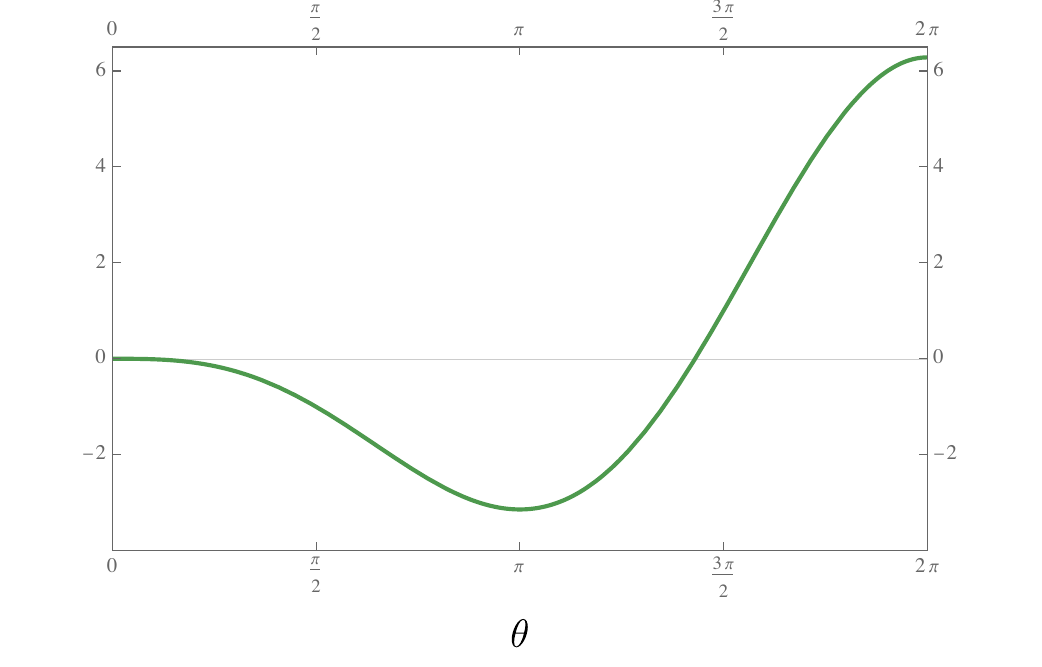}
\caption{\small{Graph of $\mathcal T(\theta)=\theta \cos \theta -\sin \theta$, $\theta\in [0,2\pi]$.} } \label{fig9}
\end{figure}

Thus, with the notations above we get the following results.
\begin{lemma}
\label{ineq} 
Consider the single-lobe state $\phi_{1,k}$ determined by $k\in (0,1)$ in \eqref{formula1}  or \eqref{La2}. Then, for $k$ such that 
\[
k^2\leqq \frac{1+ \cos(\theta_0)}{2}\approx 0.3914,
\]
we have  that $\phi_{1,k}(0)\in (\pi, \theta_0]$ and, therefore, 
\begin{equation}
\label{sineq}
\mathcal T( \phi_{1,k}(x))= \phi_{1,k}(x) \; \cos(\phi_{1,k}(x))- \sin(\phi_{1,k}(x))\leqq 0,\qquad \text{for all}\;\; x\in [-L,L].
\end{equation}
Moreover, for the center solution $\phi_1(x)\equiv \pi$, relation in \eqref{sineq} is trivial.
\end{lemma}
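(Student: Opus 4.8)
The plan is to reduce \eqref{sineq} to controlling the sign of the one-variable map $\mathcal T(\theta)=\theta\cos\theta-\sin\theta$ on a suitable interval, and then to show that the hypothesis on $k$ confines the range of $\phi_{1,k}$ to that interval. First I would pin down the maximum value $\phi_{1,k}(0)$: evaluating \eqref{profile} (equivalently \eqref{formula1} or \eqref{La2}) at $x=0$ and using $\sn(K(k);k)=1$ gives $\cos(\phi_{1,k}(0))=-1+2k^2$, while by construction $\phi_{1,k}(0)\in(\pi,2\pi)$. The hypothesis $k^2\le\frac{1+\cos\theta_0}{2}$ rewrites as $-1+2k^2\le\cos\theta_0$, i.e. $\cos(\phi_{1,k}(0))\le\cos\theta_0$. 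Since $\theta_0\in(\pi,\tfrac{3\pi}{2})\subset(\pi,2\pi)$ and $\cos$ is strictly increasing on $[\pi,2\pi]$, this forces $\phi_{1,k}(0)\le\theta_0$, hence $\phi_{1,k}(0)\in(\pi,\theta_0]$.

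Next I would analyze $\mathcal T$ on $[0,\theta_0]$. One has $\mathcal T(0)=0$ and $\mathcal T'(\theta)=\cos\theta-\theta\sin\theta-\cos\theta=-\theta\sin\theta$, so $\mathcal T$ is strictly decreasing on $(0,\pi)$ and strictly increasing on $(\pi,2\pi)$, with $\mathcal T(\pi)=-\pi$. Because $\theta_0$ is the unique zero of $\mathcal T$ in $(0,2\pi)$ — equivalently, the first positive root of $\tan\theta=\theta$, which lies in $(\pi,\tfrac{3\pi}{2})$ — these monotonicity facts give $\mathcal T(\theta)\le 0$ for every $\theta\in[0,\theta_0]$.

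Finally I would invoke the shape of the profile established earlier. For $\phi_{1,k}$ as in \eqref{formula1}, Proposition \ref{1profile} yields $\pi<\phi_{1,k}(x)\le\phi_{1,k}(0)\le\theta_0$ for all $x\in[-L,L]$; for $\phi_{1,k}$ as in \eqref{La2}, Proposition \ref{2profile} yields $0<\phi_{1,k}(L)\le\phi_{1,k}(x)\le\phi_{1,k}(0)\le\theta_0$. In either case $\phi_{1,k}(x)\in[0,\theta_0]$ for every $x\in[-L,L]$, so the previous step immediately gives $\mathcal T(\phi_{1,k}(x))\le 0$ on $[-L,L]$, which is exactly \eqref{sineq}. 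For the center solution $\phi_1\equiv\pi$ one simply has $\mathcal T(\pi)=\pi\cos\pi-\sin\pi=-\pi<0$, so \eqref{sineq} holds trivially. I do not anticipate a genuine obstacle here: the argument is elementary one-variable calculus, and the only points requiring a touch of care are verifying the numerical data $\frac{1+\cos\theta_0}{2}\approx 0.3914$ and the location $\theta_0\in(\pi,\tfrac{3\pi}{2})$, both already reflected in Figure \ref{fig9}.
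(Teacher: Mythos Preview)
Your proof is correct and follows essentially the same route as the paper: compute $\phi_{1,k}(0)$ via $\sn(K(k);k)=1$, translate the hypothesis on $k$ into $\cos(\phi_{1,k}(0))\le\cos\theta_0$ and use monotonicity of $\cos$ on $[\pi,2\pi]$ to get $\phi_{1,k}(0)\le\theta_0$, then combine $\phi_{1,k}(x)\in(0,\phi_{1,k}(0)]$ with $\mathcal T\le 0$ on $[0,\theta_0]$. The only difference is that you supply the elementary verification of the sign of $\mathcal T$ via $\mathcal T'(\theta)=-\theta\sin\theta$, which the paper records just before the lemma with a reference to the figure.
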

\begin{proof} By \eqref{formula1} or \eqref{La2} and $\sn^2(K(k);k)=1$, for all $k$, we have $\phi_{1,k}(0)=2\pi - \arccos(-1+2k^2)\leqq \theta_0$ if and only if 
\[
\cos (2\pi-\arccos(2k^2-1))=2k^2-1\leqq \cos \theta_0. 
\]
We recall that $\phi_{1,k}(0)\in (\pi, 2\pi)$. Lastly, since $\phi_{1,k}(x)\in (0, \phi_{1,k}(0)]$ for all $x\in [-L,L]$, we get immediately \eqref{sineq}. This finishes the proof.
\end{proof}

\begin{theorem}
\label{Morse=1} 
Consider  the  single-lobe state $\phi_{1,k}$  in \eqref{formula1} or \eqref{La2} determined  by $k\in (0,1)$ such that 
\[
k^2\leqq \frac{1+ \cos(\theta_0)}{2} \approx 0.3914,
\]
and such that for this value of $k$ we have the existence of a single-lobe kink state $(\phi_{1,k}, \phi_{2,a})\in D_Z$, with $a=a(k)$. Then for  $(\mathcal L_Z, D_{Z})$ defined in \eqref{sg5} with $(\phi_1, \phi_2)\equiv (\phi_{1,k}, \phi_{2,a})$, we have that the Morse index satisfies $n(\mathcal L_Z)=1$. In the case of the  degenerate single-lobe kink state $(\pi, \phi_{2,0})\in D_Z$ with $Z=\frac{2}{\pi c_2}$ we also have that $n(\mathcal L_Z)=1$.
 \end{theorem}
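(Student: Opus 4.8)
The plan is to prove that $n(\mathcal{L}_Z) = 1$ by combining the upper bound $n(\mathcal{L}_Z) \leqq 1$ from Theorem \ref{morse2} with a lower bound $n(\mathcal{L}_Z) \geqq 1$, i.e., by exhibiting a single admissible test function $\mathbf{w} = (w_1, w_2) \in D(\mathcal{Q}_Z)$ (the form domain in \eqref{Qa}, after the moving-framework shift) for which the quadratic form $\mathcal{Q}_Z(\mathbf{w}, \mathbf{w}) < 0$. Since Theorem \ref{morse2} already gives $n(\mathcal{L}_Z) \leqq 1$ for any positive single-lobe kink state, everything reduces to producing this negative direction.

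The natural candidate is built from the derivative of the profile. First I would work on the finite edge: the single-lobe component $\phi_{1,k}$ satisfies $\mathcal{L}_1 \phi_{1,k}' = 0$ on $[-L,L]$ (differentiate \eqref{1lobe}), but $\phi_{1,k}'$ does not lie in the periodic form domain since it is odd and, more importantly, need not satisfy the vertex matching. Instead, following the standard trick for single-lobe states, I would use a modification involving $\phi_{1,k}$ itself or a function like $\psi(x) = x\phi_{1,k}'(x)$ or the even function $\phi_{1,k} - c$ for a suitable constant $c$; the key computation is $\langle \mathcal{L}_1 \phi_{1,k}, \phi_{1,k} \rangle$. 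Using $-c_1^2 \phi_{1,k}'' = -\sin\phi_{1,k}$ and integrating by parts,
\[
\langle \mathcal{L}_1 \phi_{1,k}, \phi_{1,k}\rangle = \int_{-L}^{L} \big( c_1^2 (\phi_{1,k}')^2 + \cos(\phi_{1,k})\,\phi_{1,k}^2 \big)\,dx = \int_{-L}^{L} \big( \phi_{1,k}\sin\phi_{1,k} + \phi_{1,k}^2\cos\phi_{1,k}\big)\,dx + \text{(bdry)},
\]
and here is where Lemma \ref{ineq} enters: under the hypothesis $k^2 \leqq \tfrac{1+\cos\theta_0}{2}$ one has $\phi_{1,k}(x) \in (0,\theta_0]$, so by the sign of $\mathcal{T}$ the integrand $\phi_{1,k}\cos\phi_{1,k} - \sin\phi_{1,k} \leqq 0$; but I need the sign of $\phi_{1,k}\sin\phi_{1,k} + \phi_{1,k}^2\cos\phi_{1,k} = \phi_{1,k}(\sin\phi_{1,k} + \phi_{1,k}\cos\phi_{1,k})$, which is more delicate, so I would instead directly test with a function adapted so that $\mathcal{T}(\phi_{1,k})$ appears — namely test the full operator $\mathcal{L}$ with $\mathbf{w} = (\phi_{1,k}, \psi_a)$ after subtracting the stationary relation, where $\psi_a = \phi_2(\cdot + L)$, and exploit that $\mathcal{L}_a \psi_a' = 0$. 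Concretely, because $(\phi_{1,k},\psi_a)$ itself is not in $D_{Z,0}$, I would take $\mathbf{w}$ continuous at the vertex — e.g. scale the two pieces to agree at $\nu = L$ — and evaluate $\mathcal{Q}_Z(\mathbf{w},\mathbf{w})$ using the Picone-type identity/integration by parts on each edge; the half-line contribution is controlled by the kink relation $-c_2^2\psi_a'' = -\sin\psi_a$ and $\sin\psi_a = \psi_a' \cdot(\text{something})$-type manipulations, and the finite-edge contribution carries the factor that Lemma \ref{ineq} makes non-positive, with strict negativity coming from the fact that $\mathcal{T}(\phi_{1,k}) < 0$ on a set of positive measure (since $\phi_{1,k}$ is non-constant).

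For the degenerate case $(\pi, \phi_{2,0}) \in D_Z$ with $Z = \tfrac{2}{\pi c_2}$, the argument specializes considerably: here $\phi_1 \equiv \pi$, so $\mathcal{L}_1 = -c_1^2 \partial_x^2 + \cos\pi = -c_1^2\partial_x^2 - 1$, and testing with the constant function $w_1 \equiv 1$ on $[-L,L]$ glued to a suitable compactly-supported-near-$L$ bump $w_2$ on $[L,\infty)$ (adjusted to respect the $\delta$-condition and continuity) gives $\mathcal{Q}_Z(\mathbf{w},\mathbf{w}) \leqq -2L + (\text{small half-line terms} ) + Z|w_2(L)|^2$; choosing the half-line part to have small $H^1$ norm while keeping $w_1(L) = w_2(L)$ forces the form negative, so $n(\mathcal{L}_Z) \geqq 1$, and again $n(\mathcal{L}_Z) \leqq 1$ by Theorem \ref{morse2} applied to this degenerate configuration (or directly, since $-c_1^2\partial_x^2 - 1$ with periodic-type conditions on the loop has exactly one negative eigenvalue for $L$ in the admissible range, coupled with $n(\mathcal{L}_a) \leqq 1$ via Lemma \ref{delta} and the splitting).

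\textbf{Main obstacle.} The hard part will be the finite-edge quadratic-form estimate: Lemma \ref{ineq} is tailored to make $\mathcal{T}(\phi_{1,k}) = \phi_{1,k}\cos\phi_{1,k} - \sin\phi_{1,k}$ non-positive, so the test function and the integration-by-parts bookkeeping must be arranged so that precisely this combination — and not, say, $\sin\phi_{1,k} + \phi_{1,k}\cos\phi_{1,k}$ — governs the sign, while simultaneously the boundary terms at $\pm L$ and at the vertex $\nu = L$ must either cancel or contribute with the right sign after imposing the $D_{Z,0}$ matching conditions. Getting the vertex/boundary terms to cooperate (they involve $\phi_{1,k}'(\pm L)$, $\psi_a'(0)$, the coupling $Z$, and the scaling factor relating the two pieces) is the delicate point; I expect it to go through cleanly precisely because $(\phi_{1,k},\psi_a)$ is a genuine stationary solution in $D_Z$, so the relation $2\phi_{1,k}'(L) = \psi_a'(0) + Z\psi_a(0)$ is available to absorb them.
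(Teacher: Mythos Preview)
Your overall strategy --- combine the upper bound $n(\mathcal L_Z)\leqq 1$ from Theorem~\ref{morse2} with a negative test direction built from the profile, using Lemma~\ref{ineq} to control the sign --- is exactly the paper's approach. But your execution contains two errors that cause you to overcomplicate what is in fact a one-line computation.

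First, you assert that ``$(\phi_{1,k},\psi_a)$ itself is not in $D_{Z,0}$''. This is false: by hypothesis the single-lobe kink state $(\phi_{1,k},\phi_{2,a})$ belongs to $D_Z$ (that is the content of ``we have the existence of a single-lobe kink state $(\phi_{1,k},\phi_{2,a})\in D_Z$''), and after the moving-framework shift $(\phi_{1,k},\psi_a)\in D_{Z,0}$. So there is no need to scale, glue, or worry about vertex/boundary terms at all --- you can test $\mathcal L_Z$ directly with the profile.

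Second, you have a sign error in your integration by parts: with $-c_1^2\phi_{1,k}''+\sin\phi_{1,k}=0$ one gets $\int c_1^2(\phi_{1,k}')^2=-\int\phi_{1,k}\sin\phi_{1,k}+\text{(bdry)}$, not $+\int\phi_{1,k}\sin\phi_{1,k}$; so the combination that appears is exactly $\phi_{1,k}(\phi_{1,k}\cos\phi_{1,k}-\sin\phi_{1,k})=\phi_{1,k}\,\mathcal T(\phi_{1,k})$, precisely what Lemma~\ref{ineq} handles. In fact no integration by parts is needed: since the profile lies in the operator domain, compute $\mathcal L_Z(\phi_{1,k},\phi_{2,a})$ pointwise. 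The stationary equation gives $-c_j^2\phi_j''=-\sin\phi_j$, hence
\[
\mathcal L_Z(\phi_{1,k},\phi_{2,a})=\big(-\sin\phi_{1,k}+\phi_{1,k}\cos\phi_{1,k},\,-\sin\phi_{2,a}+\phi_{2,a}\cos\phi_{2,a}\big)=\big(\mathcal T(\phi_{1,k}),\,\mathcal T(\phi_{2,a})\big),
\]
and therefore
\[
\langle\mathcal L_Z(\phi_{1,k},\phi_{2,a}),(\phi_{1,k},\phi_{2,a})\rangle=\int_{-L}^L\phi_{1,k}\,\mathcal T(\phi_{1,k})\,dx+\int_L^\infty\phi_{2,a}\,\mathcal T(\phi_{2,a})\,dx<0,
\]
by Lemma~\ref{ineq} on the loop and $\phi_{2,a}(x)\leqq\phi_{2,a}(L)=\phi_{1,k}(L)<\theta_0$ on the half-line. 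The degenerate case is identical with $\phi_1\equiv\pi$. Your ``main obstacle'' disappears entirely once these two slips are corrected.
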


\begin{proof} For the   single-lobe kink state $(\phi_{1,k}, \phi_{2,a})\in D_Z$, we have for $x>L$ that $\phi_{2,a}(x)\leqq \phi_{2,a}(L)=\phi_{1,k}(L)<\theta_0$. Then, for all $x>L$ we obtain $\mathcal T(\phi_{2,a}(x)) = -\sin(\phi_{2,a}(x))+ \phi_{2,a}(x) \cos(\phi_{2,a}(x))< 0$. Therefore, by Lemma \ref{ineq} we obtain
$$
\langle \mathcal L_Z (\phi_{1,k}, \phi_{2,a})^\top, (\phi_{1,k}, \phi_{2,a})^\top\rangle=\int_{-L}^L \mathcal T( \phi_{1,k}(x))\phi_{1,k}(x)dx +\int_L^{\infty} \mathcal T(\phi_{2,a}(x))\phi_{2,a} (x)dx <0.
$$
 Then from minimax principle we  arrive at $n(\mathcal L_Z)\geqq 1$ (similarly for the degenerate case  $(\pi, \phi_{2,0})$). Thus, by Theorem \ref{morse2} we finish the proof.
\end{proof}

\begin{remark}
\label{smallest} 
\begin{enumerate}
\item[(1)] Notice that by Theorem \ref{Morse=1} and item (A) in the proof of Theorem \ref{morse2}, we obtain that the unique negative eigenvalue for $(\mathcal L_Z, D_{Z})$ coincides with the smallest negative eigenvalue for the eigenvalues problems (PBP) and ($\delta$BP) in \eqref{La1}.
\item[(2)] For $k^2>\frac{1+\cos(\theta_0)}{2}$ our study does not provide an accurate picture of stability properties of the possible profiles $(\phi_{1,k}, \phi_{2,a})$. 
\end{enumerate}
\end{remark}

In what follows we exactly determine which single-lobe kink state $(\phi_{1,k}, \phi_{2,a})\in D_Z$, characterized by 
Propositions \ref{exis1} and \ref{exis3}, will induce that the Morse index of $\mathcal L_Z$ is equal to one. As we will see the value of $L$, $k$, $c_i$ and the sign of $Z$ play a fundamental role.

\begin{theorem}
\label{exemplos} 
Consider  the  single-lobe state $\phi_{1,k}$  in \eqref{formula1} such that the single-lobe kink state $(\phi_{1,k}, \phi_{2,a})\in D_Z$, $a=a(k)<0$ (see formula \eqref{2form-a}), it is determined by Proposition \ref{exis1}. Then the following conditions on $L$,  $c_i$, $Z$, and with $k$ satisfying 
\[
k^2\leqq \frac{1+ \cos(\theta_0)}{2}\equiv k^2_\ell  \approx 0.3914,
\]
imply that the operator $\mathcal L_Z$ in \eqref{sg5} associated to $(\phi_{1,k}, \phi_{2,a(k)})$ satisfies $n(\mathcal L_Z)=1$. To state the conditions we denote $\mathcal I=K(k_\ell)\approx 1.77160$ and for $L, c_1$ fixed but arbitrary we obtain:
\begin{enumerate}
\item[(1)] Let $\frac{L}{c_1}>\frac{\pi}{2}$ and $k_0\in (0,1)$ such that $K(k_0)=\frac{L}{c_1}$. Then for $\frac{L}{c_1}<\mathcal I$ we consider the family of solutions
\[
k \in (k_0, k_\ell)\to (\phi_{1,k}, \phi_{2,a}),
\]
with $a=a(k)$ determined by formula in \eqref{2form-a}. Then for $H(k)$ defined in \eqref{form-k4} we have:
\begin{enumerate}
\item[(i)] For $Z>0$ with $Z\in (H(k_\ell), H(k_0))$, $H(k_0)=\frac{4}{\pi c_1}\big[\frac{c_1}{2c_2}-k_0\big]$, and  with $c_2$ satisfying $\frac{c_1}{2c_2}\geqq \tanh(\frac{L}{c_1})$  we have $n(\mathcal L_Z)=1$.
\item[(ii)] let  $Z>0$ and  $g(k)=k \sn(L/c_1; k)$, $k\in (k_0, 1)$, suppose that $c_2$ satisfies $g(k_\ell)<\frac{c_1}{2c_2}< tanh(\frac{L}{c_1})$. Then for $Z\in (H(k_\ell), H(k_0))$ we have $n(\mathcal L_Z)=1$.
\item[(iii)] Let $Z<0$ and suppose that $c_2$ satisfies $\frac{c_1}{2c_2}<k_0$. Then for $Z\in (H(k_0), H(k_\ell))$ we have $n(\mathcal L_Z)=1$.
\item[(iv)]  Let $Z<0$ and suppose that $c_2$ satisfies $\frac{c_1}{2c_2}=k_0$. Then for $Z\in (H(k_\ell), 0)$ we have $n(\mathcal L_Z)=1$.
\item[(v)]  Let  $Z<0$, $g(k)=k \sn(L/c_1; k)$, $k\in (k_0, 1)$ and suppose that $c_2$ satisfies $g(k_\ell)>\frac{c_1}{2c_2}>k_0$. Then for $Z\in (H(k_\ell), 0)$ we have $n(\mathcal L_Z)=1$.
\item[(vi)] Let  $Z=0$ and suppose that $c_2$ satisfies $\frac{c_1}{2c_2} \in (k_0, k_\ell \sn(L/c_1; k_\ell))$. Then  $n(\mathcal L_0)=1$.
\end{enumerate}
\item[2)] Let $\frac{L}{c_1}\leqq \frac{\pi}{2}$ and consider the family of solutions
\[
k\in (0, k_\ell)\to (\phi_{1,k}, \phi_{2,a}),
\]
with $a=a(k)$ determined by formula in \eqref{2form-a}. Then, for $H(k)$ defined in \eqref{form-k4} we have the following:
 \begin{enumerate}
 \item[(i)] For  $Z>0$ we have two cases:
 \begin{enumerate}
\item[(a)] Suppose $c_2$ satisfies $\frac{c_1}{2c_2}\geqq \tanh(\frac{L}{c_1})$. Then  for $Z\in (H(k_\ell), \frac{2}{\pi c_2})$ we have $n(\mathcal L_Z)=1$.
 \item[(b)] Suppose $c_2$ satisfies $\frac{c_1}{2c_2}< \tanh(\frac{L}{c_1})$ and let $\beta\in (0,1)$ be such that $g(\beta)=\frac{c_1}{2c_2}$. Then if $k_\ell<\beta$ we have for all $Z\in (H(k_\ell), \frac{2}{\pi c_2})$ that $n(\mathcal L_Z)=1$.
\end{enumerate}
 \item[(ii)]  Let  $Z<0$ and suppose that $c_2$ satisfies $k_\ell \sn(L/c_1; k_\ell)> \frac{c_1}{2c_2}$. Then for $Z\in (H(k_\ell), 0)$ we have $n(\mathcal L_Z)=1$.
\item[(iii)] Let  $Z=0$ and suppose that $c_2$ satisfies $\frac{c_1}{2c_2} \in (0, k_\ell \sn(L/c_1; k_\ell))$. Then  $n(\mathcal L_0)=1$.
\end{enumerate}
\end{enumerate}
\end{theorem}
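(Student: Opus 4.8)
This is essentially a synthesis result: the conclusion follows by intersecting the existence analysis of Proposition \ref{exis1} with the hypothesis $k^2\le k_\ell^2=\tfrac{1+\cos\theta_0}{2}$ of Theorem \ref{Morse=1}. The plan is as follows. For each admissible triple $(L,c_1,c_2)$ and each $Z$ in the prescribed range one produces a modulus $k_Z$ lying in the sub-interval $(k_0,k_\ell)$ (when $\tfrac{L}{c_1}>\tfrac{\pi}{2}$) or $(0,k_\ell)$ (when $\tfrac{L}{c_1}\le\tfrac{\pi}{2}$) such that $H(k_Z)=Z$, with $a=a(k_Z)<0$ determined by \eqref{2form-a}. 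By Proposition \ref{exis1} the pair $(\phi_{1,k_Z},\phi_{2,a(k_Z)})$ is then a single-lobe kink state belonging to $D_Z$, and since $k_Z^2\le k_\ell^2$, Theorem \ref{Morse=1} yields $n(\mathcal L_Z)=1$ for the operator $\mathcal L_Z$ in \eqref{sg5} built from this pair. Thus the whole argument reduces to checking, case by case, that the relevant sub-interval of moduli is non-empty and that its image under $H$ contains the prescribed $Z$-range.

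Non-emptiness is treated first. Since the complete elliptic integral $K$ is strictly increasing and $K(k_0)=\tfrac{L}{c_1}$ by definition of $k_0$, the hypothesis $\tfrac{L}{c_1}<\mathcal I=K(k_\ell)$ in Case~(1) is exactly $k_0<k_\ell$, so $(k_0,k_\ell)\neq\varnothing$; in Case~(2) one has $\tfrac{L}{c_1}\le\tfrac{\pi}{2}=K(0)<K(k_\ell)$, so $(0,k_\ell)$ is automatically non-empty and every $k\in(0,1)$ is admissible by Proposition \ref{1profile}. Moreover $a(k)<0$ on these intervals (from the limits \eqref{limits} and the monotonicity of $a$ recorded in the proof of Proposition \ref{exis1}), so one stays inside the branch of Proposition \ref{1profile}.

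For the case analysis I would reuse \emph{verbatim} the quantitative facts established inside the proof of Proposition \ref{exis1}: the smoothness of $k\mapsto a(k)$, the strict monotonicity of $g(k)=k\,\sn(L/c_1;k)$ on the relevant interval, the limits \eqref{limits} and \eqref{Hk}, and the continuity of $H$ from \eqref{form-k4}. For each of the subcases (1)(i)--(vi) and (2)(i)--(iii) one reads off the endpoint behaviour of $H$ on the sub-interval $(k_0,k_\ell)$ (resp.\ $(0,k_\ell)$): the value $H(k_0^{+})=\tfrac{4}{\pi c_1}\big[\tfrac{c_1}{2c_2}-k_0\big]$ or $H(0^{+})=\tfrac{2}{\pi c_2}$ at the left end (or $0$, when $k_0$ or $0$ is replaced by the zero $\beta$ of $\tfrac{c_1}{2c_2}-g$), together with $H(k_\ell)$ at the right end. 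Since $H$ is continuous, the intermediate value theorem produces $k_Z$ in the open sub-interval for every $Z$ strictly between these two endpoint values; in the subcases where the proof of Proposition \ref{exis1} already yields strict monotonicity of $H$ this $k_Z$ is unique, so $\mathcal L_Z$ is unambiguously defined. The auxiliary hypotheses of the theorem ($\tfrac{c_1}{2c_2}\ge\tanh(\tfrac{L}{c_1})$, $g(k_\ell)<\tfrac{c_1}{2c_2}$, $\tfrac{c_1}{2c_2}=k_0$, $g(k_\ell)>\tfrac{c_1}{2c_2}>k_0$, $k_\ell<\beta$, $\tfrac{c_1}{2c_2}<k_\ell\,\sn(L/c_1;k_\ell)$, and so on) are precisely what ensure that the truncated interval $(k_0,k_\ell)$ (resp.\ $(0,k_\ell)$), or the further truncation $(\beta,k_\ell)$, does not cross the sign change of $H$ in the wrong direction and that $H(k_\ell)$ lies on the correct side, so that the prescribed $Z$-interval is contained in the image of $H$. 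The degenerate center state $\phi_1\equiv\pi$ with $Z=\tfrac{2}{\pi c_2}$ is not obtained this way but is already covered by the last assertion of Theorem \ref{Morse=1}.

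The difficulty is bookkeeping rather than conceptual: one carries six subcases in Case~(1) and three in Case~(2), each with its own sign of $Z$, its own comparison between $\tfrac{c_1}{2c_2}$ and the quantities $k_0$, $g(k_\ell)$, $\tanh(\tfrac{L}{c_1})$, and its own determination of whether $H$ is monotone or merely continuous on the interval of interest. The only genuinely delicate subcases are (1)(iv)--(v) and (2)(ii), where $H$ need not be monotone on all of $(k_0,1)$ (resp.\ $(0,1)$); there one first restricts to $(\beta,k_\ell)$ with $g(\beta)=\tfrac{c_1}{2c_2}$, on which $H<0$ and $H(\beta)=0$, and applies the intermediate value theorem on that smaller interval. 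This suffices, because Theorem \ref{Morse=1} only requires the existence of \emph{one} single-lobe kink state in $D_Z$ with modulus at most $k_\ell$, not uniqueness.
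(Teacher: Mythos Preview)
Your proposal is correct and matches the paper's approach essentially verbatim: the paper's proof is precisely this case-by-case verification, invoking in each subcase the monotonicity and limit computations already established in the proof of Proposition \ref{exis1} to locate (via the intermediate value theorem) a modulus $k_Z$ in $(k_0,k_\ell)$ or $(0,k_\ell)$ with $H(k_Z)=Z$, and then concluding $n(\mathcal L_Z)=1$ from Theorem \ref{Morse=1}. Your identification of (1)(iv)--(v) and (2)(ii) as the subcases requiring restriction to $(\beta,k_\ell)$ with $g(\beta)=\tfrac{c_1}{2c_2}$ also agrees with the paper (in (1)(iv) one has $\beta=k_0$ since $g(k_0)=k_0=\tfrac{c_1}{2c_2}$).
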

\begin{proof}  By Proposition \ref{exis1} and its proof we have the following cases:
\begin{enumerate}
\item[(1)] Suppose $\frac{L}{c_1}>\frac{\pi}{2}$:
\begin{enumerate}
\item[(i)] For $Z>0$ and $Z\in (H(k_\ell), H(k_0))$ it follows from the strictly-decreasing property of the mapping $k\in (k_0, k_\ell)\to H(k)$ and the condition on $c_2$ that there is $k_Z\in (k_0, k_\ell)$ such that $ H(k_Z)=Z$. Therefore, $\Phi_{k_Z}=(\phi_{1,k_Z}, \phi_{2,a})\in D_Z$, $a=a(k_Z)$ and the operator $\mathcal L_Z$ associated to $\Phi_{k_Z}$ satisfies $n(\mathcal L_Z)=1$.
\item[ii)] Let $Z>0$. By the conditions on $c_2$ and $g(k_0)=k_0<g(k_\ell)$ we get $\frac{c_1}{2c_2}\in (k_0, \tanh(\frac{L}{c_1}))$ and $k_\ell\in (k_0, \gamma)$ with $g(\gamma)=\frac{c_1}{2c_2}$ and $H(\gamma)=0$. Therefore, there is $k_Z\in (k_0, k_\ell)$ such that $ H(k_Z)=Z$ and the profile $\Phi_{k_Z}=(\phi_{1,k_Z}, \phi_{2,a})\in D_Z$, $a=a(k_Z)$. Hence, $\mathcal L_Z$ associated to $\Phi_{k_Z}$ satisfies $n(\mathcal L_Z)=1$.
\item[(iii)] Let $Z<0$. By the condition on $c_2$ and the strictly-increasing property of the mapping $k\in (k_0, k_\ell)\to H(k)$ there is $k_Z\in (k_0, k_\ell)$ such that $ H(k_Z)=Z$. Thus, we conclude that $n(\mathcal L_Z)=1$.
\item[(iv)]  Let $Z<0$.  By the condition on $c_2$ we obtain $H(k_0)=0$. Thus, as $H(k_\ell)\geqq m_0 = \min_{k\in (k_0, 1)} H(k)$ we have for $Z\in (H(k_\ell), 0)$ that there is $k_Z\in (k_0, k_\ell)$ such that $ H(k_Z)=Z$. Then,  $n(\mathcal L_Z)=1$.
\item[(v)]  Let $Z<0$.  By the condition on $c_2$ and $k \sn(L/c_1; k)<\tanh(\frac{L}{c_1})$, we obtain $\frac{c_1}{2c_2}\in \big(k_0, \tanh(\frac{L}{c_1})\big)$. Next, let $\beta\in (k_0, k_\ell)$ be such that $g(\beta)=\frac{c_1}{2c_2}$ and, thus, $H(\beta)=0$. Then for $Z\in (H(k_\ell), 0)$ there is $k_Z\in (\beta, k_\ell)$ such that $ H(k_Z)=Z$. We conclude that $n(\mathcal L_Z)=1$.
\item[(vi)] Let  $Z=0$.  By the condition on $c_2$ there is $k\in (k_0, k_\ell)$ such that  $k \sn(L/c_1; k)=\frac{c_1}{2c_2}$. Then $H(k)=0$ and  
the operator $\mathcal L_0$ associated to $(\phi_{1,k}, \phi_{2,a(k)})$  satisfies $n(\mathcal L_0)=1$.
\end{enumerate}
\item[2)] Now suppose that $\frac{L}{c_1}\leqq \frac{\pi}{2}$: 
\begin{enumerate}
\item[(i)] For $Z>0$ we have the following cases:
 \begin{enumerate}
\item[(a)] Suppose that $c_2$ satisfies $\frac{c_1}{2c_2}\geqq \tanh(\frac{L}{c_1})$. Then $H(k)>0$ for all $k\in (0,1)$ with $H(0)=\frac{2}{\pi c_2}$,  $H(1)=0$ and $H'(k)<0$. Thus for $Z\in (H(k_\ell), \frac{2}{\pi c_2} )$ there is $k_Z\in (0, k_\ell)$ such that $ H(k_Z)=Z$. We conclude that $n(\mathcal L_Z)=1$. 
\item[(b)] Suppose that $c_2$ satisfies $\frac{c_1}{2c_2}< \tanh(\frac{L}{c_1})$ and let $\beta\in (0,1)$ be such that $g(\beta)=\frac{c_1}{2c_2}$. Then $H(\beta)=0$ and $H(k)>0$ for all $k\in (0,\beta)$ with $H(0)=\frac{2}{\pi c_2}$ and $H'(k)<0$. Thus, if $k_\ell<\beta$ we have for all $Z\in (H(k_\ell), \frac{2}{\pi c_2})$ that $n(\mathcal L_Z)=1$.
\end{enumerate}
\item[(ii)]  For $Z<0$ and $c_2$ satisfying $k_\ell \sn(L/c_1; k_\ell)> \frac{c_1}{2c_2}$, let $\beta\in (0,1)$ be such that $g(\beta)=\frac{c_1}{2c_2}$. Then, $H(\beta)=H(1)=0$, $H(k)<0$ for $k\in (\beta, 1)$ and for $Z\in (H(k_\ell), 0)$ we have $n(\mathcal L_Z)=1$.
 \item[(iii)] For $Z=0$ and $c_2$ satisfying $\frac{c_1}{2c_2} \in (0, k_\ell \sn(L/c_1; k_\ell))$, we obtain  $k\in (0, k_\ell)$ such that $k \sn(L/c_1; k)=\frac{c_1}{2c_2}$ (we recall that $g(k)=k \sn(L/c_1; k)$ is strictly-increasing) and hence $H(k)=0$. Then $n(\mathcal L_0)=1$.
\end{enumerate}
\end{enumerate}
This finishes the examination of all the cases and the proof.
\end{proof}

\begin{remark}
\label{nopuede} 
For  $\frac{L}{c_1}>\frac{\pi}{2}$ and $k_0\in (0,1)$ such that $K(k_0)=\frac{L}{c_1}$, we obtain that for $\frac{L}{c_1}\geqq I=K(k_\ell)$ that $k_0>k_\ell$ and for the family $k\in (k_0, 1)\to  (\phi_{1,k}, \phi_{2,a})$ we can not apply Theorem \ref{Morse=1}.
\end{remark}

\begin{theorem}\label{exemplos2}  Let $L, c_1$ be arbitrary but fixed with $\frac{L}{c_1}>\frac{\pi}{2}$ and $k_0$ such that $K(k_0)=\frac{L}{c_1}$. Let $c_2$ satisfy
\[
\frac{c_1}{2k_0}< c_2.
\]
Then, for $k\in (0, k_0)$, by Proposition \ref{exis3} there exists a shift-value $a=a(k)>0$ (see Formula \eqref{2form-a}), such that the  single-lobe kink state satisfies $(\phi_{1,k}, \phi_{2,a})\in D_0$, with $\phi_{1,k}$  in \eqref{La2}. Then the following restrictions on $L$,  $c_1$ and with $k$ satisfying 
\[
k^2\leqq \frac{1+ \cos(\theta_0)}{2}\equiv k^2_\ell  \approx 0.3914,
\]
guarantee that the operator $\mathcal L_0$ in \eqref{sg5} associated to $(\phi_{1,k}, \phi_{2,a(k)})$  satisfies $n(\mathcal L_0)=1$. In order to state these conditions, let  $\mathcal I=K(k_\ell)\approx 1.77160$; then we have:
\begin{enumerate}
\item[(1)] For  $\frac{L}{c_1}< \mathcal I$, the self-adjoint operator $\mathcal L_0$ in \eqref{sg5} determined by the family of solutions $ k\in (0, k_0)\to (\phi_{1,k}, \phi_{2,a})$ satisfies $n(\mathcal L_0)=1$.
\item[(2)] For  $\frac{L}{c_1}> \mathcal I$, the self-adjoint operator $\mathcal L_0$ in \eqref{sg5} determined by the family of solutions $ k\in (0, k_\ell)\to (\phi_{1,k}, \phi_{2,a})$ satisfies $n(\mathcal L_0)=1$.
\end{enumerate}
\end{theorem}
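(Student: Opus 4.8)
The plan is to combine the Morse-index upper bound $n(\mathcal L_0)\leqq 1$ from Theorem \ref{morse2} with a matching lower bound $n(\mathcal L_0)\geqq 1$ obtained from the quadratic-form test in Lemma \ref{ineq}, exactly as in the proof of Theorem \ref{Morse=1}, and then to check that the two parametric regimes listed actually produce admissible single-lobe kink states $(\phi_{1,k},\phi_{2,a(k)})\in D_0$ with $k\leqq k_\ell$. Concretely, fix $L,c_1$ with $L/c_1>\tfrac\pi2$ and $c_2$ with $c_1/(2k_0)<c_2$, so that Proposition \ref{exis2} is applicable and the continuity/derivative matching at the vertex reduces (for $Z=0$) to the single scalar equation $F(k)=k\,\sn(L/c_1;k)=c_1/(2c_2)$ on $k\in(0,k_0)$, with $F(0)=0$, $F(k)\leqq k_0$ and $\lim_{k\to k_0}F(k)=k_0$; recall $b(k)=c_1K(k)$ gives the gluing point of the two branches in \eqref{La2}.

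The first step is to record that, once $k\leqq k_\ell$, Lemma \ref{ineq} gives $\mathcal T(\phi_{1,k}(x))\leqq 0$ on $[-L,L]$, and since for the kink tail $\phi_{2,a}(x)\leqq\phi_{2,a}(L)=\phi_{1,k}(L)<\theta_0$ we also get $\mathcal T(\phi_{2,a}(x))<0$ for $x>L$; therefore
\[
\langle \mathcal L_0 (\phi_{1,k},\phi_{2,a})^\top,(\phi_{1,k},\phi_{2,a})^\top\rangle=\int_{-L}^L \mathcal T(\phi_{1,k})\phi_{1,k}\,dx+\int_L^\infty \mathcal T(\phi_{2,a})\phi_{2,a}\,dx<0,
\]
so that by the minimax principle $n(\mathcal L_0)\geqq 1$, while Theorem \ref{morse2} forces $n(\mathcal L_0)\leqq 1$; hence $n(\mathcal L_0)=1$ whenever such an admissible state with $k\leqq k_\ell$ exists. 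The remaining work is purely to exhibit, in each of the two cases, a value $k\in(0,k_0)$ solving $F(k)=c_1/(2c_2)$ with $k\leqq k_\ell$. In case (1), $L/c_1<\mathcal I=K(k_\ell)$ forces $k_0<k_\ell$, so \emph{every} solution $k\in(0,k_0)$ produced by Proposition \ref{exis2} automatically satisfies $k<k_\ell$; nothing more is needed. In case (2), $L/c_1>\mathcal I$ means $k_0>k_\ell$, and one restricts attention to the subfamily $k\in(0,k_\ell)$: by the last-zero analysis in the proof of Proposition \ref{exis2} the map $F$ is strictly increasing on $(r_n,k_0)$ up to the value $k_0$, but here I instead use that $F$ is continuous with $F(0)=0$ and takes values up to (a value $\geqq$) $F(k_\ell)$, so the condition $c_1/(2c_2)$ lying in the appropriate range guarantees a solution $k\leqq k_\ell$; this is the content the theorem quietly assumes through "the family of solutions $k\in(0,k_\ell)$". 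For that subfamily the argument of the first step applies verbatim and $n(\mathcal L_0)=1$.

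The main obstacle is case (2): unlike the $a<0$ situation of Theorem \ref{exemplos}, the map $k\mapsto F(k)=k\,\sn(L/c_1;k)$ need not be monotone on $(0,k_\ell)$ (it can oscillate, with several simple zeros, cf. Remark \ref{3pi}), so one cannot simply invoke a strictly-increasing/decreasing argument to pin down the solution $k$. The resolution is that the theorem is a \emph{conditional} statement: it asserts $n(\mathcal L_0)=1$ for the single-lobe kink states in the designated family, i.e. precisely for those $k\in(0,k_\ell)$ that do solve the gluing equation, whose existence is underwritten by Proposition \ref{exis2} (which only needs $c_1/(2k_0)<c_2$ and continuity of $F$, not monotonicity). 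One must still verify that for such $k$ the corresponding $\phi_{1,k}$ in \eqref{La2} is genuinely a single-lobe profile, i.e. $\phi_{1,k}'(x)<0$ on $(0,L)$ — this is exactly the selection of the "right" root flagged in Remark \ref{3pi}(a) — and that $a(k)>0$ as in \eqref{2form-a}; granting this, the quadratic-form computation above closes the proof. I would end with a one-line remark that Remark \ref{nopuede} explains why the cutoff at $\mathcal I=K(k_\ell)$ is sharp for this method: when $L/c_1\geqq \mathcal I$ the natural family $k\in(k_0,1)$ lies entirely above $k_\ell$ and Lemma \ref{ineq} no longer delivers the sign condition.
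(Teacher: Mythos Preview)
Your proposal is correct and follows essentially the same route as the paper: the paper's own proof is the one-liner ``immediate consequence of Theorem \ref{Morse=1}'', and you have simply unpacked that reference by reproducing the quadratic-form computation $\langle\mathcal L_0\Theta,\Theta\rangle<0$ (giving $n(\mathcal L_0)\geqq 1$) together with the upper bound $n(\mathcal L_0)\leqq 1$ from Theorem \ref{morse2}. Your additional discussion of existence in case (2) is more than the paper supplies --- the theorem is stated conditionally on the family produced by Proposition \ref{exis2}, and the only structural point needed is that $K$ is strictly increasing, so $L/c_1\lessgtr \mathcal I=K(k_\ell)$ forces $k_0\lessgtr k_\ell$, which is exactly what places the admissible $k$'s below $k_\ell$ in each case.
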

\begin{proof} 
The proof is an immediate consequence of Theorem \ref{Morse=1}.
\end{proof}

\subsection{Trivial kernel for $\mathcal L_{Z}$}

The purpose of this subsection is to provide sufficient conditions for $\mathcal L\equiv ( \mathcal L_{1}, \mathcal L_{a})$ to have a trivial kernel,  with $ \mathcal L_{1}, \mathcal L_{a}$ defined in \eqref{L} and  $(\phi_1, \psi_a)$ being an {\it a priori}  positive single-lobe kink  with 
\[
\psi_a(x)= 4\arctan\Big[e^{-\frac{1}{c_2} (x+a)}\Big], \qquad x>0.
\]

\begin{theorem}\label{ker} 
Let  $Z\in (-\infty, \frac{2}{\pi c_2})$ be an admissible strength value to obtain a single-lobe kink state $(\phi_{1}, \psi_{a})\in D_{Z,0}$ and 
define the quantity
\[
\alpha_a\equiv \frac{\psi''_{a}(0)}{\psi'_{a}(0)}=-\frac{1}{c_2} \tanh\Big(\frac{a}{c_2}\Big).
\]
Then, the kernel associated to $\mathcal L=(\mathcal L_{1}, \mathcal L_{a})$ on $D_{Z,0}$ is trivial in the following cases:
\begin{enumerate}
\item[(1)] for $\alpha_a\neq -Z$, or
\item[(2)] for $\alpha_a= -Z$ in the case of admissible  $Z$ satisfying $Z\leqq 0$. 
\end{enumerate}
In particular, 
\begin{enumerate}
\item[(i)] for the case $Z\neq 0$ and $a<0$, the kernel of $\mathcal L$ is trivial;
\item[(ii)] for the case $Z<0$ and $a>0$, the kernel of $\mathcal L$ is trivial; and,
\item[(iii)] for the case $Z=0$, the kernel of $\mathcal L$ is trivial for any $a\in \mathbb R-\{0\}$.
\end{enumerate}
In the case of a degenerate single-lobe kink state, $(\pi, \phi_{2,0})\in D_Z$ with $Z=\frac{2}{\pi c_2}$, we also have that $\ker(\mathcal L_Z)$ is trivial.
\end{theorem}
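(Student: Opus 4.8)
The plan is to reduce the kernel problem on $D_{Z,0}$ to the two scalar eigenvalue problems furnished by the splitting eigenvalue method (Lemma \ref{split}) at $\lambda = 0$, and then to rule out nontrivial solutions by Sturm--Liouville oscillation theory combined with the explicit structure of $\phi_1$ and $\psi_a$. Suppose $(f,g) \in D_{Z,0}$ satisfies $\mathcal{L}(f,g)^\top = 0$. The first dichotomy is whether $g(0+)=0$ or not. If $g(0+) \neq 0$, then by Lemma \ref{split} the pair splits into the periodic problem $\mathcal{L}_1 f = 0$ with $f(-L)=f(L)$, $f'(-L)=f'(L)$, and the $\delta$-problem $\mathcal{L}_a g = 0$ with $g'(0+) = -Z g(0+)$. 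For the $\mathcal{L}_a$ component I would use that $\psi_a' < 0$ on $[0,\infty)$ solves $\mathcal{L}_a \psi_a' = 0$ (differentiate \eqref{trav21}); since $\psi_a'$ has no zeros, it is, up to scaling, the unique nontrivial $L^2$-solution of $\mathcal{L}_a y = 0$ on the half-line that decays at infinity. Hence $g$ must be a multiple of $\psi_a'$, which lies in $W_Z$ precisely when $-\psi_a''(0)/\psi_a'(0) = -Z$, i.e. $\alpha_a = -Z$. Therefore, whenever $\alpha_a \neq -Z$, no such $g$ exists, forcing $g \equiv 0$ — this handles case (1).

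Once $g \equiv 0$ (either because $\alpha_a \neq -Z$, or because we are in the branch $g(0+)=0$ which by the arguments of Theorem \ref{morse2}, item (I)(B), also forces $g \equiv 0$ via the unfolded operator $\widetilde{\mathcal{L}}$ and its Perron--Frobenius property), the function $f$ satisfies the Dirichlet-type periodic problem $\mathcal{L}_1 f = 0$, $f(\pm L)=0$, $f'(-L)=f'(L)$. Here I would invoke that $\phi_1$ even forces the Dirichlet eigenvalue to be simple and $f$ to be even or odd, while Floquet oscillation theory forces the number of zeros of $f$ in $[-L,L)$ to be even; combined with $f(\pm L)=0$ this pins down $f$ as odd with an interior zero structure that contradicts $\mathcal{L}_1 \phi_1' = 0$. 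Concretely, $\phi_1'$ is odd, $\phi_1' > 0$ on $[-L,0)$ and vanishes only at $0$ (and at $\pm L$ by symmetry only if $\phi_1'(L)=0$, which is \emph{not} the case for a single-lobe kink state since $\phi_1'(L) < 0$); so by the Sturm comparison/separation theorem a nontrivial $f$ vanishing at $\pm L$ would force $\phi_1'$ to vanish somewhere in $(-L,L)$ other than $0$, a contradiction. This rules out nontrivial $f$, hence $\ker(\mathcal{L}) = \{0\}$, giving case (1) completely and, in the remaining case $\alpha_a = -Z$ with $Z \leqq 0$, we observe that $g = c\,\psi_a'$ with $\psi_a' < 0$ would be the ground state of $(\mathcal{L}_a, W_Z)$ at eigenvalue $0$; but then $n(\mathcal{L}_a) = 0$, and one checks (as in the Remark after Lemma \ref{delta}) that this situation is incompatible with the admissibility relations linking $Z$, $a$, and $k$ for single-lobe kink states when $Z \leqq 0$ — more precisely the condition $\alpha_a = -Z \geqq 0$ together with $\alpha_a = -\tfrac{1}{c_2}\tanh(a/c_2)$ forces $a \leqq 0$, and then one revisits the $\mathcal{L}_1$-part as above to conclude $f \equiv 0$, hence $g(0+) = f(L) = 0$, contradicting $g = c\,\psi_a' \neq 0$ at $0$. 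This gives case (2), and the particular cases (i)--(iii) follow by checking the sign of $\alpha_a = -\tfrac{1}{c_2}\tanh(a/c_2)$ against the sign of $-Z$ in each listed regime.

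For the degenerate kink state $(\pi, \phi_{2,0})$ with $Z = \tfrac{2}{\pi c_2}$, the first component of $\mathcal{L}_Z$ is $-c_1^2 \partial_x^2 + \cos \pi = -c_1^2\partial_x^2 - 1$ on periodic functions on $[-L,L]$; its kernel consists only of constants $f \equiv$ const when $1$ is not a periodic eigenvalue of $-c_1^2\partial_x^2$, and in general any kernel element $f$ must then match, through the $\delta$-condition at the vertex, a half-line solution $g$ of $\mathcal{L}_a g = 0$ decaying at infinity, which again is forced to be a multiple of $\phi_{2,0}'$; the boundary relation $2f'(L) = g'(L) + Z g(L)$ with $f'(L) = 0$ (as $f$ is then even, hence $f'$ odd, with $f(\pm L) = g(L)$) reduces to $g'(L) + Z g(L) = 0$, i.e. $-\psi_0''(0)/\psi_0'(0) = Z$; since $a=0$ gives $\alpha_0 = 0 \neq Z = \tfrac{2}{\pi c_2} > 0$, this is impossible, so $g \equiv 0$, and then $f(\pm L) = 0$ with $f$ in the kernel of $-c_1^2\partial_x^2 - 1$ forces $f \equiv 0$. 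Hence $\ker(\mathcal{L}_Z)$ is trivial in the degenerate case as well.

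The main obstacle I anticipate is the borderline case (2), $\alpha_a = -Z$ with $Z \leqq 0$: here the naive splitting argument produces a candidate kernel element $g = c\,\psi_a'$, so one genuinely needs the extra structural input — either that $\psi_a' < 0$ being a ground state forces $n(\mathcal{L}_a)=0$ which is incompatible with the single-lobe geometry, or that the companion equation $\mathcal{L}_1 f = 0$ with the resulting boundary data has only the trivial solution. Making this exclusion airtight, rather than the generic case (1), is where the careful bookkeeping lies.
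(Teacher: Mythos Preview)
Your treatment of case (1), $\alpha_a \neq -Z$, is essentially the paper's: any $L^2$ solution of $\mathcal{L}_a g = 0$ on $(0,\infty)$ is a multiple of $\psi_a'$, and if that multiple is nonzero the $\delta$-condition $g'(0+) = -Zg(0+)$ forces $\alpha_a = -Z$, so $g \equiv 0$ (note that $g(0+)=0$ already gives $g\equiv 0$ directly since $\psi_a'(0)\neq 0$; no appeal to Theorem~\ref{morse2} is needed). Your argument for the remaining $f$-problem is in the right spirit but imprecise: Sturm separation alone does not rule out an even $f$ with $f(\pm L)=0$, because $\phi_1'$ \emph{does} have a zero (at $0$) between $\pm L$. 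The paper splits cleanly: if $f$ is odd it is a multiple of $\phi_1'$, so $f(L)=0$ with $\phi_1'(L)\neq 0$ gives $f\equiv 0$; if $f$ is even, Floquet zero-counting on $[-L,L)$ together with $f(-L)=0$ forces an odd number of zeros in $(-L,L)$, hence by evenness $f(0)=0$, and then $f'(0)=0$ gives $f\equiv 0$.

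The genuine gap is in case (2). Your argument there is circular: once $g(0+)\neq 0$, the splitting lemma puts $f$ in the \emph{periodic} problem with $f(\pm L)=g(0+)\neq 0$, so you cannot ``revisit the $\mathcal{L}_1$-part as above'' --- that analysis used the Dirichlet conditions $f(\pm L)=0$. Nor does $n(\mathcal{L}_a)=0$ by itself contradict admissibility. The paper's key device, which you are missing, is a Wronskian sign argument. One first shows (via the interlacing \eqref{inequal2} and $\mathcal{L}_1\phi_1'=0$) that $0=\eta_1$ is simple for the periodic problem, so $f$ is even with exactly two zeros $\pm\beta$ in $(-L,L)$, $f(0)<0<f(\pm L)$, and (since $f'$ is odd and periodic) $f'(L)=0$. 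Then the constant Wronskian $W(x)=f(x)\phi_1''(x)-f'(x)\phi_1'(x)$ is evaluated twice: at $x=0$ one finds $W=f(0)\phi_1''(0)>0$ (both factors negative), while at $x=L$ one finds $W=f(L)\phi_1''(L)$, which via $\phi_1(L)=\psi_a(0)$ and $\alpha_a=-Z$ equals a positive multiple of $-cZ[\psi_a'(0)]^2\leqq 0$ when $c<0$ and $Z\leqq 0$. This sign contradiction is precisely the ``extra structural input'' you anticipated needing but did not supply.
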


Next, as an application of Theorem \ref{ker} and from Theorems \ref{exemplos} and \ref{exemplos2}, we establish our main result about the Morse and nullity indices for $\mathcal L_Z$. 
\begin{theorem}
\label{MK} 
Let  $Z\in (-\infty, \frac{2}{\pi c_2})$ be an admissible strength value to obtain a single-lobe kink state $(\phi_{1,k}, \psi_{2,a})\in D_{Z}$, given by  Propositions \ref{exis1} and \ref{exis3}. Then, under the restrictions on $L, c_1, c_2, k$ in Theorem \ref{exemplos} and \ref{exemplos2}, we obtain $n(\mathcal L_Z)=1$ and $\ker(\mathcal L_Z)=\{\bf{0}\}$. In the case of a degenerate single-lobe kink state $(\pi, \phi_{2,0})\in D_Z$ with $Z=\frac{2}{\pi c_2}$, we also have that $\ker(\mathcal L_Z)$ is trivial and $n(\mathcal L_Z)=1$.
\end{theorem}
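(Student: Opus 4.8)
The plan is to obtain Theorem \ref{MK} by assembling the Morse-index results of Theorems \ref{exemplos} and \ref{exemplos2} with the kernel-triviality result of Theorem \ref{ker}, the only work being to check that, on every admissible parameter configuration, the hypotheses of the kernel theorem are met. First I would pass to the moving framework of Section \ref{secspecstudy}, so that the eigenvalue problem for $(\mathcal L_Z, D_Z)$ is equivalent to that for $(\mathcal L, D_{Z,0})$; in particular $n(\mathcal L_Z)=n(\mathcal L)$ and $\dim\ker(\mathcal L_Z)=\dim\ker(\mathcal L)$, and it suffices to argue at the level of $\mathcal L=(\mathcal L_1,\mathcal L_a)$.

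Next I would split into the two families of single-lobe kink states. For the family with $a=a(k)<0$ coming from Proposition \ref{exis1}, under the restrictions on $L, c_1, c_2, Z$ and on $k$ (with $k^2\leqq k_\ell^2\approx 0.3914$) collected in Theorem \ref{exemplos}, Theorem \ref{exemplos} gives $n(\mathcal L_Z)=1$. Since $a(k)<0$ we have $\alpha_{a(k)}=-\frac{1}{c_2}\tanh\big(\frac{a(k)}{c_2}\big)>0$, so the configuration $\alpha_a=-Z$ with $Z>0$ never occurs; hence Theorem \ref{ker} applies in all subcases of Theorem \ref{exemplos}: for $Z\neq 0$ by part (i) of Theorem \ref{ker}, and for $Z=0$ (cases $(1)(vi)$ and $(2)(iii)$ of Theorem \ref{exemplos}) by part (iii) of Theorem \ref{ker} with $a\neq 0$. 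In both instances $\ker(\mathcal L_Z)=\{\mathbf{0}\}$. For the family with $a=a(k)>0$ coming from Proposition \ref{exis2}, which forces $Z=0$, Theorem \ref{exemplos2} gives $n(\mathcal L_0)=1$ under its stated restrictions on $L, c_1, c_2$ and $k$, and Theorem \ref{ker}(iii) (again $a\neq 0$) gives $\ker(\mathcal L_0)=\{\mathbf{0}\}$.

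Finally, for the degenerate single-lobe kink state $(\pi,\phi_{2,0})\in D_Z$ with $Z=\tfrac{2}{\pi c_2}>0$, the last clause of Theorem \ref{Morse=1} yields $n(\mathcal L_Z)=1$, while the concluding sentence of Theorem \ref{ker} yields the triviality of $\ker(\mathcal L_Z)$ (consistently, $\alpha_0=0\neq -Z$, so case (1) of Theorem \ref{ker} applies). This exhausts all the profiles characterized by Propositions \ref{exis1} and \ref{exis2}, and the theorem follows.

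I do not anticipate a genuinely hard step here: the statement is a synthesis, and the only delicate point is the bookkeeping of the admissible pairs $(Z, a(k))$ produced by Propositions \ref{exis1}--\ref{exis2} and further narrowed by Theorems \ref{exemplos}--\ref{exemplos2}, namely confirming that each such pair lands in one of the kernel-triviality cases of Theorem \ref{ker}. The potentially problematic case of Theorem \ref{ker} — $\alpha_a=-Z$ with $Z>0$ — is ruled out automatically because $a<0$ on the relevant family forces $\alpha_a>0>-Z$, and in the remaining families $Z\leqq 0$ (or $a=0$ in the degenerate case), so Theorem \ref{ker} always applies.
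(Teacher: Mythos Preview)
Your proposal is correct and follows essentially the same approach as the paper: the paper's proof is a one-line invocation of Theorems \ref{exemplos}, \ref{exemplos2} and items (i), (iii) of Theorem \ref{ker}, and your write-up supplies precisely the bookkeeping that justifies that invocation (sign of $\alpha_a$ versus $-Z$ in each subcase, and the appeal to Theorem \ref{Morse=1} and the last clause of Theorem \ref{ker} for the degenerate profile).
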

\begin{proof} 
The proof is immediate from Theorems \ref{exemplos} and \ref{exemplos2}, and items $(i)$ and $(iii)$ in Theorem \ref{ker}.
\end{proof}

\begin{proof}[Proof of Theorem \ref{ker}]
The proof follows a similar  analysis as in Angulo \cite{Angloop, Angtad}. By convenience of the reader we
provide the main point in the analysis. Thus, let $(f, h)\in D_{Z,0}$ such that $\mathcal L(f, h)^\top= {\bf{0}}$. Then  $\mathcal L_ah(x)=0$ for  $x>0$. Next, from classical Sturm-Liouville theory on half-line (see \cite{BerShu91}) and $\mathcal L_a\psi'_{a}=0$, we have $h=c\psi'_{a}$ on $(0,\infty)$ for $c\in \mathbb R$. Now, we consider the following cases: 

\begin{enumerate}
\item[(I)]  Suppose that $c=0$. This implies that $h\equiv 0$ and hence $f$ satisfies $\mathcal L_1f=0$ with  Dirichlet-periodic boundary conditions on $[-L,L]$, $f(L)=f(-L)=0$ and $f'(L)=f'(-L)$. Suppose that $f\neq 0$:
\begin{enumerate}
\item[(a)] From Sturm-Liouville theory for Dirichlet conditions we have that $f$ is even or odd (as zero is a simple eigenvalue and the potential $V_{\phi_1}$ is even).
\item[(b)] Suppose that $f$ is odd: then, since $\mathcal L_1\phi'_1=0$ and $\phi'_1$ is odd, there is $\gamma\in \mathbb R$ such that $f=\gamma\phi'_1$ (classical ODE analysis for second-order differential equations). Therefore, $0=f(L)=\gamma\phi'_1(L)$ implies $\gamma=0$ (because $\phi'_1(L)\neq 0$). Then, $f\equiv 0$.
\item[(c)] Suppose that $f$ is even: then from oscillation results of the Floquet theory, we need to have that zero it is not the first eigenvalue for $\mathcal L_1$. Therefore, $f$ needs to change of sign and the number of zeros of $f$ on $[-L, L)$ is {\it even}. Hence $f$ has an odd number of zeros on  $(-L, L)$ (as $f(-L)=0$). Then, $f(0)=0$ and $f'(0)=0$. Hence, $f\equiv 0$.
\end{enumerate}
Thus, from items $(a), (b)$ and $(c)$ we obtain $f\equiv 0$ and $\ker(\mathcal L)=\{\bf{0}\}$.
\item[(II)] Suppose that $\alpha_a\neq -Z$. Then $c=0$ and by the former  item (I) it follows that $\ker\mathcal L)=\{\bf{0}\}$. Indeed, if $c\neq 0$ then $h(0)\neq 0$ and from Lemma \ref{split} we get the eigenvalue problems in \eqref{La1}. In particular, $h$ and  $Z$ satisfy $h'(0)=-Zh(0)$ and hence $Z+\frac{\psi''_a(0)}{\psi'_a(0)}=0$, which is a contradiction.
\item[(III)]  Suppose that $\alpha_a=-Z$ with $Z\leqq 0$. Then $c=0$ and so by the previous item (I) it follows that $\ker(\mathcal L)=\{\bf{0}\}$. Suppose that $c\neq 0$; without loss of generality we can assume that $c<0$. Hence, $h(x)>0$ on $[0, \infty)$ and by the splitting eigenvalue result in Lemma \ref{split} we obtain that $(f, h)$ satisfies the separated eigenvalues problems (PBP)-($\delta$BP) in \eqref{La1} with $\lambda=0$. The $\delta$-interaction condition (also by hypotheses) implies 
\begin{equation}
\label{Z}
Z=-\frac{\psi''_a(0)}{\psi'_a(0)}.
 \end{equation} 
Next, we denote by $\{\eta_n\}$, $n\in \mathbb N_0$, the set of eigenvalues for the periodic-problem associated to $\mathcal L_1$ on $[-L, L]$, and by $\{\mu_n\}$, $n\in \mathbb N_0$, the set of eigenvalues for the Dirichlet-problem associated to $\mathcal L_1$ on $[-L, L]$. Then, by classical Sturm-Liouville theory (see Theorem 4.8.1 in \cite{Zettl05}) we have that  $\eta_0$ is   simple and, in particular, we have the following distribution of eigenvalues,
\begin{equation}\label{inequal2}
 \eta_0<\mu_0<\eta_1\leqq \mu_1\leqq \eta_2<\mu_2<\eta_3.
\end{equation}

Now we prove that $\eta_1=0$ in \eqref{inequal2} and that it is simple. Indeed, we examine all the cases:
 \begin{enumerate}
\item[(a)]  Suppose that $0> \mu_1$. We know that $\mathcal L_1 \phi_1'=0$, $\phi'_1$ is odd and strictly decreasing on $[-L, 0]$. Now, since the eigenfunction $\chi$ associated to $\mu_1$ is odd and $\chi(-L)=\chi(0)=0$, it follows from the Sturm Comparison Theorem (cf. \cite{Teschl12,Zettl05}) that $\phi'_1$ needs to have one zero on $(-L, 0)$. But this is impossible and we have a contradiction.
 \item[(b)]  Suppose that $\mu_1=0$. Then we have the existence of one odd-eigenfunction $\chi$  for $\mu_1$ ($\mathcal L_1 \chi=0$,  $\chi(-L)=\chi(L)=0$). Hence, by classical arguments from the theory of second-order differential equations, we get $\chi= \gamma \phi'_1$ and  $0=\chi (L)=\gamma \phi'_1(L)$. Hence, $\gamma=0$ and $\chi\equiv 0$. Again, this is not possible and we reach a contradiction.
 \item[(c)] From the previous items (a) and (b) it follows that $\mu_1>0$ and, by \eqref{inequal2} and Lemma \ref{split} ($\mathcal L_1 f=0$),  we need to have $\eta_1=0$ and that the zero is simple. Moreover, the eigenfunction $f$ is even (for $f$ being  odd, we have $f(L)=-f(-L)=-f(L)$ and so $h(0)=f(L)=0$), which is a contradiction.
 \end{enumerate}
 
Next, by Floquet theory, $f$ has exactly two different zeros $-\beta, \beta$ ($\beta>0$) on $(-L,L)$. Therefore,  $f(0)<0$, $f(\pm L)=h(0)>0$). Now, we consider the Wronskian function (constant) of $f$ and $\phi'_1$ (we recall $\mathcal L_1 f=\mathcal L_1 \phi'_1=0$), 
\[
W(x)=f(x)\phi''_1(x)-f'(x)\phi'_1(x)\equiv C,\quad \text{for all}\;\; x\in [-L,L].
\]
 Then, $C=f(0)\phi''_1(0)>0$. Now, by \eqref{Z}, $f'(L)=0$, $f(L)=h(0)$, and from \eqref{trav21} we obtain
\begin{equation}
\label{final}
C=f(L)\phi''_1(L)=h(0)\psi''_{a}(0)=-cZ[\psi'_{a}(0)]^2\leqq 0,
\end{equation}
with  $Z\leqq 0$. Therefore we get a contradiction by assuming that $c\neq 0$.
\end{enumerate}
The statements $(i)-(iii)$ are immediate from the explicit formula for $\alpha_a$. This finishes the proof.
\end{proof}

Next, we establish our main result about the  instability of a family of single-lobe kink states for the sine-Gordon model on a tadpole graph.
\begin{theorem}
\label{unstable} 
Under the restrictions on $Z, L, c_1, c_2$ and $k$ given by Propositions \ref{exis1} and \ref{exis3} and by Theorems \ref{exemplos}, \ref{exemplos2} and \ref{MK}, every member of the smooth family of single-lobe kink states, $k\to (\phi_{1,k}, \phi_{2,a(k)})\in D_{Z}$,  is spectrally and nonlinearly unstable for the sine-Gordon model \eqref{sg2} on a tadpole graph. The degenerate single-lobe kink state $(\pi, \phi_{2,0})\in D_Z$, with $Z=\frac{2}{\pi c_2}$ and 
\[
\phi_{2,0}(x) = 4 \arctan \big[e^{-\frac{1}{c_2}(x-L)}\big], \qquad x\in [L, \infty),
\]
is also spectrally and nonlinearly unstable for the sine-Gordon model \eqref{sg2} on a tadpole graph.
\end{theorem}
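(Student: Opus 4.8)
The plan is to identify the linearized matrix operator $J\mathcal{E}$ around each admissible single-lobe kink state, verify the structural hypotheses $(S_1)$–$(S_3)$ of the abstract criterion in Theorem \ref{crit} using the spectral data furnished by Theorem \ref{MK}, and then upgrade the resulting growing mode to nonlinear instability by means of the $C^2$ data-solution map of Theorem \ref{IVP}. So, first I would fix an admissible kink state $\Theta = (\phi_{1,k},\phi_{2,a(k)}) \in D_Z$ (resp. the degenerate state $(\pi,\phi_{2,0})$ with $Z=\tfrac{2}{\pi c_2}$) in the parameter regime covered by Theorems \ref{exemplos}, \ref{exemplos2} and \ref{MK}, and form $\mathcal{E} = \diag(\mathcal{L}_Z, I_2)$ with $\mathcal{L}_Z$ as in \eqref{sg5}. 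Assumption $(S_1)$ is immediate from the decomposition $J\mathcal{E} = JE + \mathcal{T}$ with $\mathcal{T}$ bounded on $H^1(\mathcal{G})\times L^2(\mathcal{G})$ and $JE$ a $C_0$-semigroup generator by Theorem \ref{cauchy1}, so $J\mathcal{E}$ generates a $C_0$-semigroup $\{S(t)\}_{t\geq0}$ on $H^1(\mathcal{G})\times L^2(\mathcal{G})$ with $\|S(t)\|\leq Me^{\gamma t}$. Assumption $(S_2)$ is the self-adjointness of $(\mathcal{L}_Z, D_Z)$, part of the standing setup (Theorem \ref{graph3} together with the construction in Section \ref{secspecstudy}).

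For $(S_3)$ I would argue as follows. Since the loop is compact and the half-line potential $\cos(\phi_{2,a})$ converges to $1$ as $x\to\infty$ (because $\phi_{2,a}\to 0$), Weyl's theorem gives $\sigma_{\mathrm{ess}}(\mathcal{L}_Z) = [1,\infty)$; in the degenerate case $\mathcal{L}_1 = -c_1^2\partial_x^2 - 1$ on the loop and the same conclusion holds on the half-line. Theorem \ref{MK} records $n(\mathcal{L}_Z) = 1$ and $\ker(\mathcal{L}_Z) = \{\mathbf{0}\}$, so $\mathcal{L}_Z$ is invertible and its spectrum splits as $\sigma(\mathcal{L}_Z) = \{\lambda_0\}\cup J_0$ with $\lambda_0 < 0$ the unique (simple) negative eigenvalue and $J_0 \subset [r_0,\infty)$ for some $r_0 > 0$ (the infimum of $\sigma(\mathcal{L}_Z)\setminus\{\lambda_0\}$, which is positive since $\mathcal{L}_Z$ has trivial kernel and only finitely many eigenvalues below $\inf\sigma_{\mathrm{ess}}$). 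Thus $(S_1)$–$(S_3)$ hold and Theorem \ref{crit} produces a real eigenvalue $\lambda > 0$ of $J\mathcal{E}$ (together with its mirror $-\lambda$). The associated eigenfunction $\Psi\in D(J\mathcal{E})$ is smooth, hence lies in $\mathcal{E}(\mathcal{G})\times L^2(\mathcal{G})$, and $e^{\lambda t}\Psi$ is a growing mode of the linearized flow \eqref{stat4}: this is precisely spectral instability of $\Theta$ in the sense of the definition following \eqref{stat11}.

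To pass from spectral to nonlinear instability in the $H^1(\mathcal{G})\times L^2(\mathcal{G})$-norm of Definition \ref{nonstab}, I would run the standard Lyapunov–Henry bootstrap adapted to metric graphs, exactly as in Angulo and Plaza \cite{AP1,AP2,AP3}. Writing $\mathbf{V} = \mathbf{W} - \Theta$, the perturbation solves $\mathbf{V}_t = J\mathcal{E}\mathbf{V} + \mathcal{N}(\mathbf{V})$, where $\mathcal{N}(\mathbf{V})$ collects the quadratic-and-higher remainder of $\sin(\phi_j + v_j) - \sin(\phi_j) - \cos(\phi_j)v_j$ and therefore obeys $\|\mathcal{N}(\mathbf{V})\|_{L^2(\mathcal{G})} = O(\|\mathbf{V}\|^2)$ because $\sin$ is smooth; this smoothness is exactly what underlies the $C^2$ data-solution map of Theorem \ref{IVP}. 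Using Duhamel's formula $\mathbf{V}(t) = S(t)\mathbf{V}(0) + \int_0^t S(t-s)\mathcal{N}(\mathbf{V}(s))\,ds$, the semigroup bound $\|S(t)\|\leq Me^{\gamma t}$, and the spectral gap provided by the eigenvalue $\lambda > 0$, a Gronwall argument shows that data of size $\delta$ aligned with the unstable eigendirection $\Psi$ grow to a fixed threshold $\epsilon$ in a finite time $t_0$, contradicting stability. Because $\mathcal{E}(\mathcal{G})\times L^2(\mathcal{G})$ is invariant under the nonlinear flow (Theorem \ref{IVP}) and $\Psi$ can be chosen in this space, the same conclusion holds in the energy space and hence in the larger $H^1(\mathcal{G})\times L^2(\mathcal{G})$-norm. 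The degenerate kink state $(\pi,\phi_{2,0})$ is handled identically, since Theorem \ref{MK} already guarantees $n(\mathcal{L}_Z)=1$ and $\ker(\mathcal{L}_Z)=\{\mathbf{0}\}$ for it as well.

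The spectral half of the argument is essentially automatic once the Morse-index count of Theorem \ref{MK} is available, so the main obstacle is the nonlinear half: one must check that the resolvent and semigroup estimates of Theorem \ref{cauchy1}, the invariance of $\mathcal{E}(\mathcal{G})\times L^2(\mathcal{G})$ (Proposition \ref{preser}), and the $C^2$ smoothness of the data-solution map (Theorem \ref{IVP}) are exactly the hypotheses required by the abstract nonlinear-instability scheme of \cite{AP1}, and that the projection onto the unstable eigendirection can be carried out within the energy space rather than merely in $L^2(\mathcal{G})$. Once this transfer is confirmed the theorem follows.
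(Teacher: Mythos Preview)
Your proposal is correct and follows essentially the same route as the paper: spectral instability is obtained by feeding the Morse-index and kernel information of Theorem \ref{MK} into the criterion of Theorem \ref{crit} (you simply spell out the verification of $(S_1)$--$(S_3)$ that the paper leaves implicit), and nonlinear instability is deduced from the $C^2$ data-solution map of Theorem \ref{IVP} via the standard linear-to-nonlinear instability transfer, which the paper invokes by citing \cite{AC2, ALN08, HPW82} rather than writing out the Duhamel/Gronwall argument.
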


\begin{proof}  
The proof of the linear instability property of the single-lobe kink states $(\phi_{1,k}, \phi_{2,a(k)})$ is immediate from Theorem \ref{MK} and the instability criterion in Theorem \ref{crit}. Now, since the mapping data-solution for the sine-Gordon model on the energy space $\mathcal E(\mathcal G)\times L^2(\mathcal G)$ is at least of class $C^2$ (indeed, it is smooth) by Theorem \ref{IVP}, it follows that the linear instability property of $(\phi_{1,k}, \phi_{2,a(k)})$ is in fact of nonlinear type in the $\mathcal E(\mathcal G)\times L^2(\mathcal G)$-norm.  The reader is referred to \cite{AC2, ALN08, HPW82} for further information. This finishes the proof.
\end{proof}

\vskip0.1in

 \noindent
{\bf Data availability} 
\vskip0.1in
No data was used for the research described in the article.
 
 \vskip0.1in

\section*{Acknowledgements}
J. Angulo was supported in part by Universal/CNPq project and by CNPq/Brazil Grant. The work of R. G. Plaza was partially supported by DGAPA-UNAM, program PAPIIT, grant IN-104922 and by SECIHTI, grant CF-2023-G-122. R. G. Plaza is grateful to the IME-USP for their hospitality and to the FAPESP, Brazil (processo 2024/15816-0) for their financial support during a research stay in November 2024 when this work was carried out.

\appendix
\section{Appendix}
\label{secApp}

In this section we formulate some tools from the extension theory of symmetric operators by Krein and von Neumann which are suitable for our needs (see, for instance, Naimark  \cite{Nai67, Nai68} and Reed and Simon \cite{RS2} for further information).  In particular, we establish for $(\widetilde{\mathcal L}, D_{\delta, \gamma})_{\gamma \in \mathbb R}$ in \eqref{Leven} and \eqref{Ddelta}, a Perron-Frobenius result to be used in the  accurate Morse index estimate  $n(\mathcal L_Z)\leqq 1$ for $(\mathcal L_Z, D_Z)$ in \eqref{sg5} and \eqref{Domain0}.  
 \subsection{Extension theory for the Laplacian operator on a tadpole graph}
The following three results from the  extension theory of symmetric operators are classical and can be found in \cite{Nai67, Nai68, RS2}.
 
\begin{theorem}[von-Neumann decomposition]
\label{d5} 
Let $A$ be a closed, symmetric operator, then
\begin{equation}
\label{d6}
D(A^*)=D(A)\oplus\mathcal N_{-i} \oplus\mathcal N_{+i}.
\end{equation}
with $\mathcal N_{\pm i}= \ker (A^*\mp iI)$. Therefore, for $u\in D(A^*)$ and $u=x+y+z\in D(A)\oplus\mathcal N_{-i} \oplus\mathcal N_{+i}$,
\begin{equation}\label{d6a}
A^*u=Ax+(-i)y+iz.
\end{equation}
\end{theorem}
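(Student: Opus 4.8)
The plan is to realize \eqref{d6} as an orthogonal direct sum with respect to the \emph{graph inner product}, and then read off \eqref{d6a} by linearity. First I would equip $D(A^*)$ with
\[
\langle u,v\rangle_{*}:=\langle u,v\rangle+\langle A^*u,A^*v\rangle ,\qquad u,v\in D(A^*).
\]
Since $A$ is densely defined, its adjoint $A^*$ exists and is automatically closed, so $(D(A^*),\langle\cdot,\cdot\rangle_{*})$ is a Hilbert space. Because $A$ is closed, its graph is closed in $H\times H$, which shows that $D(A)$ is a closed subspace of $D(A^*)$ in the graph norm; likewise $\mathcal N_{\pm i}=\ker(A^*\mp iI)$ are closed in the graph norm, being kernels of operators that map $(D(A^*),\|\cdot\|_{*})$ continuously into $H$. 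Note also that, $A$ being symmetric, $A\subseteq A^*$, so $A^*x=Ax$ for $x\in D(A)$.

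Next I would verify that $D(A)$, $\mathcal N_{-i}$ and $\mathcal N_{+i}$ are mutually $\langle\cdot,\cdot\rangle_{*}$-orthogonal. For $x\in D(A)$ and $y\in\mathcal N_{-i}$ (so $A^*y=-iy$), the identity $\langle Ax,y\rangle=\langle x,A^*y\rangle=i\langle x,y\rangle$ gives $\langle A^*x,A^*y\rangle=\langle Ax,-iy\rangle=i\langle Ax,y\rangle=-\langle x,y\rangle$, whence $\langle x,y\rangle_{*}=0$; the pairing of $D(A)$ with $\mathcal N_{+i}$ is identical. For $y\in\mathcal N_{-i}$ and $z\in\mathcal N_{+i}$ one computes $\langle A^*y,A^*z\rangle=\langle -iy,iz\rangle=-\langle y,z\rangle$, so again $\langle y,z\rangle_{*}=0$. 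Being mutually orthogonal closed subspaces, their algebraic sum $D(A)\oplus\mathcal N_{-i}\oplus\mathcal N_{+i}$ is a closed subspace of $D(A^*)$.

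The crux, and the step I expect to be the main obstacle, is to show that this closed sum exhausts $D(A^*)$, equivalently that its $\langle\cdot,\cdot\rangle_{*}$-orthogonal complement is trivial. Let $w\in D(A^*)$ be orthogonal to $D(A)$. Unwinding $\langle x,w\rangle_{*}=0$ for all $x\in D(A)$ yields $\langle Ax,A^*w\rangle=\langle x,-w\rangle$, which by the very definition of the adjoint forces $A^*w\in D(A^*)$ with $(A^*)^2w=-w$, i.e. $\bigl((A^*)^2+I\bigr)w=0$. Setting $y:=\tfrac12(w+iA^*w)$ and $z:=\tfrac12(w-iA^*w)$, a direct check using $(A^*)^2w=-w$ gives $A^*y=-iy$ and $A^*z=iz$, so $w=y+z\in\mathcal N_{-i}\oplus\mathcal N_{+i}$. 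If, in addition, $w$ is orthogonal to $\mathcal N_{-i}$ and to $\mathcal N_{+i}$, then $w$ is orthogonal to $y+z=w$, forcing $\|w\|_{*}^2=0$ and hence $w=0$. This establishes \eqref{d6}. The action formula \eqref{d6a} is then immediate: for $u=x+y+z$ with $x\in D(A)$, $y\in\mathcal N_{-i}$, $z\in\mathcal N_{+i}$, linearity of $A^*$ together with $A^*x=Ax$, $A^*y=-iy$ and $A^*z=iz$ gives $A^*u=Ax+(-i)y+iz$.
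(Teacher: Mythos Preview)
Your argument is correct and is essentially the standard proof of the von Neumann decomposition via the graph inner product on $D(A^*)$. The paper, however, does not give its own proof of this statement: it is listed among ``three results from the extension theory of symmetric operators'' that ``are classical and can be found in \cite{RS2}'', and the theorem is immediately followed only by a remark (that the direct sum in \eqref{d6} is not necessarily orthogonal in the ambient Hilbert space) and then the next proposition. So there is nothing in the paper to compare against beyond the citation.

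One small comment worth making explicit in your write-up: the orthogonality you establish is with respect to the graph inner product $\langle\cdot,\cdot\rangle_{*}$, not the original inner product on $H$; this is entirely consistent with the paper's remark that the sum in \eqref{d6} need not be orthogonal (in $H$). Also, you implicitly use that $A$ is densely defined so that $A^*$ exists---this is part of the paper's standing convention for symmetric operators (see the Notation section), but it would not hurt to state it.
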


\begin{remark} 
The direct sum in (\ref{d6}) is not necessarily orthogonal.
\end{remark}

\begin{proposition}\label{11}
	Let $A$ be a densely defined, closed, symmetric operator in some Hilbert space $H$ with deficiency indices equal  $n_{\pm}(A)=1$. All self-adjoint extensions $A_\theta$ of $A$ may be parametrized by a real parameter $\theta\in [0,2\pi)$ where
	\begin{equation*}
	\begin{split}
	D(A_\theta)&=\{x+c\phi_+ + \zeta e^{i\theta}\phi_{-}: x\in D(A), \zeta \in \mathbb C\},\\
	A_\theta (x + \zeta \phi_+ + \zeta e^{i\theta}\phi_{-})&= Ax+i \zeta \phi_+ - i \zeta e^{i\theta}\phi_{-},
	\end{split}
	\end{equation*}
	with $A^*\phi_{\pm}=\pm i \phi_{\pm}$, and $\|\phi_+\|=\|\phi_-\|$.
\end{proposition}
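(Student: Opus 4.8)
The plan is to obtain the statement as the rank-one specialization of von Neumann's classical description of the self-adjoint extensions of a densely defined closed symmetric operator, taking as given (from Reed and Simon \cite{RS2}) the dictionary between symmetric extensions of $A$ and partial isometries between its deficiency spaces, together with the von Neumann decomposition already recorded in Theorem \ref{d5}.

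First I would recall the general correspondence in the form compatible with Theorem \ref{d5}: every closed symmetric extension $\widetilde A\supseteq A$ has the form $D(\widetilde A)=D(A)\oplus\{u+Wu:u\in\mathcal D\}$ for a closed subspace $\mathcal D\subseteq\mathcal N_{+i}$ and an isometry $W:\mathcal D\to\mathcal N_{-i}$, with $\widetilde A$ acting through formula \eqref{d6a}, and $\widetilde A$ is self-adjoint exactly when $W$ is a unitary of $\mathcal N_{+i}$ onto $\mathcal N_{-i}$ (equivalently, when the Cayley transform of $A$ is extended to a unitary operator). Specializing to $n_\pm(A)=1$, the deficiency spaces are the lines $\mathcal N_{\pm i}=\mathbb C\,\phi_\pm$, where $A^*\phi_\pm=\pm i\phi_\pm$ and $\|\phi_+\|=\|\phi_-\|$; the unitaries $\mathcal N_{+i}\to\mathcal N_{-i}$ are then exactly the maps $W(\zeta\phi_+)=\zeta e^{i\theta}\phi_-$, $\theta\in[0,2\pi)$. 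Substituting this $W$ gives the domain $D(A_\theta)=\{x+\zeta\phi_++\zeta e^{i\theta}\phi_-:x\in D(A),\ \zeta\in\mathbb C\}$, and applying \eqref{d6a} to $u=x+\zeta\phi_++\zeta e^{i\theta}\phi_-$, whose $\mathcal N_{-i}$-component is $\zeta e^{i\theta}\phi_-$ and whose $\mathcal N_{+i}$-component is $\zeta\phi_+$, produces $A_\theta u=Ax+i\zeta\phi_+-i\zeta e^{i\theta}\phi_-$, which is the stated action. For self-containedness I would additionally verify symmetry of $A_\theta$ directly; the only nontrivial contribution to $\IM\langle A_\theta u,u\rangle$ is a multiple of $\|\phi_+\|^2-\|\phi_-\|^2$, which vanishes by the normalization, and self-adjointness together with the claim that $\{A_\theta\}_{\theta\in[0,2\pi)}$ exhausts the self-adjoint extensions is then read off from the quoted von Neumann theorem, since $\theta\mapsto e^{i\theta}$ runs over all unitaries between the two lines.

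I do not expect a real obstacle: the substantive ingredient — the extension/partial-isometry correspondence and its self-adjointness criterion — is classical and is being cited rather than reproved, and what is left is the bookkeeping of the one-dimensional case. The single point that has to be handled with care is the normalization convention: it is $\|\phi_+\|=\|\phi_-\|$ (equal norms, not unit norms) that is required, precisely because that is what makes $\zeta\phi_+\mapsto\zeta e^{i\theta}\phi_-$ an isometry; omitting or misstating it would break both the surjective-isometry hypothesis and the direct symmetry check, so this is the one place where the argument is genuinely sensitive.
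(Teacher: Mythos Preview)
Your proposal is correct and is precisely the standard derivation one finds in Reed and Simon \cite{RS2}. Note, however, that the paper does not actually supply its own proof of Proposition~\ref{11}: it is listed among the ``classical'' results from extension theory that ``can be found in \cite{RS2}'' and is simply quoted. So there is nothing to compare against --- your argument is the expected specialization of the von Neumann extension theorem (Theorem~\ref{d5} plus the unitary/partial-isometry correspondence) to deficiency indices $(1,1)$, and your remark about the normalization $\|\phi_+\|=\|\phi_-\|$ being the sensitive point is apt.
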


\begin{proposition}\label{semibounded}
Let $A$  be a densely defined lower semi-bounded symmetric operator (that is, $A\geq mI$)  with finite deficiency indices, $n_{\pm}(A)=k<\infty$,  in the Hilbert space ${H}$, and let $\widehat{A}$ be a self-adjoint extension of $A$.  Then the spectrum of $\widehat{A}$  in $(-\infty, m)$ is discrete and  consists of, at most, $k$  eigenvalues counting multiplicities.
\end{proposition}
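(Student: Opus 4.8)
The plan is to combine von Neumann's description of the self-adjoint extensions of $A$ with an elementary variational counting argument, so that the ``at most $k$ eigenvalues'' bound and the discreteness of the spectrum below $m$ come out together. First I would record the two structural facts that do all the work. Since $A$ has finite and equal deficiency indices $n_\pm(A)=k$, von Neumann's extension theory (cf.\ Theorem~\ref{d5} and Proposition~\ref{11}, in their version for general $k$) shows that every self-adjoint extension $\widehat A$ satisfies $A\subseteq\widehat A\subseteq A^*$ and, crucially, $D(A)\subseteq D(\widehat A)$ with $\dim\big(D(\widehat A)/D(A)\big)=k$: indeed $D(\widehat A)=\{x+y+Uy:x\in D(A),\ y\in\mathcal N_{i}\}$ for a unitary $U:\mathcal N_{i}\to\mathcal N_{-i}$, so the quotient is isomorphic to $\mathcal N_{i}$, of dimension $n_+(A)=k$. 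Second, I would invoke the spectral theorem for $\widehat A$: for any bounded interval $(a,b)$ the range of the spectral projection $E_{\widehat A}\big((a,b)\big)$ lies in $D(\widehat A)$, and $\langle \widehat A u,u\rangle\le b\|u\|^2$ for every $u\in\operatorname{Ran}E_{\widehat A}\big((a,b)\big)$.

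The core estimate is then: for every $b<m$ and every $a\in\R$ one has $\dim\operatorname{Ran}E_{\widehat A}\big((a,b)\big)\le k$. I would prove this by contradiction. If $M:=\operatorname{Ran}E_{\widehat A}\big((a,b)\big)$ had dimension $\ge k+1$, then applying rank–nullity to the composition $M\hookrightarrow D(\widehat A)\twoheadrightarrow D(\widehat A)/D(A)$, whose target has dimension $k$, the subspace $M\cap D(A)$ would have dimension $\ge (k+1)-k=1$; choose $u\in M\cap D(A)$ with $u\ne 0$. Since $u\in D(A)$ we have $Au=\widehat A u$, and since $u\in\operatorname{Ran}E_{\widehat A}\big((a,b)\big)$ we get $\langle Au,u\rangle=\langle \widehat A u,u\rangle\le b\|u\|^2<m\|u\|^2$, contradicting $A\ge mI$. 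Letting $a\to-\infty$, $E_{\widehat A}\big((-\infty,b)\big)$ is the strong limit of the increasing family $E_{\widehat A}\big((a,b)\big)$ of projections of rank $\le k$, whence $\dim\operatorname{Ran}E_{\widehat A}\big((-\infty,b)\big)\le k$, and finally $\dim\operatorname{Ran}E_{\widehat A}\big((-\infty,m)\big)=\sup_{b<m}\dim\operatorname{Ran}E_{\widehat A}\big((-\infty,b)\big)\le k$.

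To conclude I would read both assertions off this estimate. The bound $\dim\operatorname{Ran}E_{\widehat A}\big((-\infty,m)\big)\le k$ says precisely that $\widehat A$ has at most $k$ eigenvalues below $m$, counted with multiplicity (in particular none accumulate at $-\infty$). Moreover, for any $\lambda_0<m$, picking $\varepsilon>0$ with $\lambda_0+\varepsilon<m$, the projection $E_{\widehat A}\big((\lambda_0-\varepsilon,\lambda_0+\varepsilon)\big)$ has rank $\le k$, so $\lambda_0\notin\sigma_{\mathrm{ess}}(\widehat A)$; hence $\sigma(\widehat A)\cap(-\infty,m)$ consists only of isolated eigenvalues of finite multiplicity, i.e.\ the spectrum is discrete there. (Alternatively one could argue that the Friedrichs extension $A_F\ge mI$ has $\sigma(A_F)\subset[m,\infty)$, that the resolvents of any two self-adjoint extensions of a symmetric operator with deficiency index $k$ differ by an operator of rank $\le k$, and that Weyl's theorem then gives $\sigma_{\mathrm{ess}}(\widehat A)=\sigma_{\mathrm{ess}}(A_F)\subset[m,\infty)$ — but the quotient-dimension argument above is still what yields the sharp count.) The step requiring the most care is the bookkeeping that identifies $D(A)$ as a subspace of codimension exactly $k$ inside $D(\widehat A)$, together with the routine spectral-theorem facts that $\operatorname{Ran}E_{\widehat A}\big((a,b)\big)\subset D(\widehat A)$ for bounded intervals and that the quadratic-form bound survives the $a\to-\infty$ passage to the limit.
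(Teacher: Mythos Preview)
Your argument is correct. Note, however, that the paper does not actually supply a proof of this proposition: it is stated as a classical result with references to Naimark \cite{Nai67} and Reed--Simon \cite{RS2}, Chapter~X, and is then used as a black box in the Morse-index estimates (Theorem~\ref{M2}, Lemma~\ref{delta}, etc.). So there is no ``paper's own proof'' to compare against.

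That said, your approach---von Neumann's parametrization to get $\dim\big(D(\widehat A)/D(A)\big)=k$, then a rank--nullity argument against the spectral subspace $\operatorname{Ran}E_{\widehat A}\big((a,b)\big)\subset D(\widehat A)$ for $b<m$---is precisely the standard textbook argument one finds in the cited references, and all steps are clean. The alternative route you sketch at the end (Friedrichs extension plus finite-rank resolvent difference plus Weyl) is also standard, but as you correctly observe, it only gives $\sigma_{\mathrm{ess}}(\widehat A)\subset[m,\infty)$ and does not by itself deliver the sharp eigenvalue count $\le k$; your quotient-dimension argument is what does that.
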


\subsection{Perron-Frobenius property }

In what follows, we establish  the Perron-Frobenius property for the family of self-adjoint operators
$(\widetilde{\mathcal L}, D_{\delta, \gamma})$, where
\begin{equation}
\label{Leven2}
\widetilde{\mathcal L}=-c_2^2\partial_x^2 + \cos(\psi_{\mathrm{even}}),
 \end{equation} 
$\psi_{\mathrm{even}}$ is the even-extension of the kink-profile $\psi_a(x)=4 \arctan (e^{-\frac{1}{c_2}(x+a)})$, $x>0$, on all the line, and  
\begin{equation}\label{Ddelta2}
   D_{\delta, \gamma}=\{f\in H^2(\mathbb R-\{0\})\cap H^1(\mathbb R): f'(0+)-f'(0-)=\gamma f(0)\}
   \end{equation}
for any $\gamma \in \mathbb R$. So, we start with the following  remark:  by Weyl's essential spectrum theorem (cf. Reed and Simon \cite{RS4}), we have that the essential spectrum, $\sigma_{\mathrm{ess}}(\widetilde{\mathcal L})$, of $\widetilde{\mathcal L}$ satisfies $\sigma_{\mathrm{ess}}(\widetilde{\mathcal L})=[1, \infty)$ for any $\gamma$.

\begin{theorem}[Perron-Frobenius property]
\label{PFpro} 
Consider the family of self-adjoint operators defined in \eqref{Leven2}-\eqref{Ddelta2}, $(\widetilde{\mathcal L}, D_{\delta, \gamma})_{\gamma \in \mathbb R}$. For $\gamma$ fixed, assume that $\beta=\inf \sigma(\widetilde{\mathcal L})<1$ is the smallest eigenvalue. Then, $\beta$ is simple, and its corresponding eigenfunction $\zeta_\beta$ is positive (after replacing $\zeta_\beta$ by $-\zeta_\beta$ if necessary) and even.
\end{theorem}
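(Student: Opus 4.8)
The plan is to establish the three claimed properties—simplicity of $\beta$, positivity and evenness of its eigenfunction—in that order, exploiting two structural facts: $\widetilde{\mathcal L}$ acts on $L^2(\mathbb R)$ (not on a graph with several edges), and the potential $\cos(\psi_{\mathrm{even}})$ is an even function. First I would record that since $\beta < 1 = \inf\sigma_{\mathrm{ess}}(\widetilde{\mathcal L})$, $\beta$ is an isolated eigenvalue of finite multiplicity and equals $\inf\sigma(\widetilde{\mathcal L})$, hence it is the bottom of the spectrum; the associated quadratic form is $\widetilde{\mathcal Q}_\gamma(f) = c_2^2\|f'\|^2_{L^2(\mathbb R)} + \int_{\mathbb R}\cos(\psi_{\mathrm{even}})f^2\,dx - \gamma|f(0)|^2$ on $H^1(\mathbb R)$ (note the $\delta$-domain condition is encoded in the form through the boundary term, and $f\in H^1(\mathbb R)$ is continuous so $f(0)$ makes sense).

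For the Perron-Frobenius step I would argue as follows. Let $\zeta_\beta$ be a real eigenfunction for $\beta$. The key observation is $\widetilde{\mathcal Q}_\gamma(|f|) \leqq \widetilde{\mathcal Q}_\gamma(f)$ for all $f\in H^1(\mathbb R)$: indeed $\||f|'\| = \|f'\|$ a.e. by the standard lattice property of $H^1$, the potential term is unchanged since it depends only on $f^2$, and $|f(0)| = ||f|(0)|$ so the boundary term is unchanged as well. Since $\beta$ is the infimum of the Rayleigh quotient, $|\zeta_\beta|$ is also a minimizer, hence an eigenfunction for $\beta$, hence (by elliptic regularity away from $0$ and the matching/jump condition at $0$) a classical solution of $\widetilde{\mathcal L}\,|\zeta_\beta| = \beta\,|\zeta_\beta|$ on $\mathbb R\setminus\{0\}$ lying in $D_{\delta,\gamma}$. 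On each of the half-lines $(-\infty,0)$ and $(0,\infty)$ this is a second-order ODE, so a nonnegative solution that vanished at an interior point together with the unique continuation principle would force $|\zeta_\beta|\equiv 0$ on a half-line; combined with the transmission condition at $0$ and uniqueness for the ODE this propagates to $|\zeta_\beta|\equiv 0$ on all of $\mathbb R$, a contradiction. Therefore $|\zeta_\beta| > 0$ everywhere, which forces $\zeta_\beta$ to be of one sign, i.e., $\zeta_\beta > 0$ (after multiplying by $-1$ if needed). Simplicity then follows: if $\zeta_\beta$ and $\widetilde\zeta_\beta$ were two linearly independent eigenfunctions for $\beta$, both could be taken strictly positive by the argument just given, yet they would have to be orthogonal in $L^2(\mathbb R)$, which is impossible for two strictly positive functions.

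For evenness I would use that the reflection operator $(Rf)(x) = f(-x)$ is unitary on $L^2(\mathbb R)$, preserves $H^1(\mathbb R)$, preserves the $\delta$-domain $D_{\delta,\gamma}$ (the jump condition $f'(0+) - f'(0-) = \gamma f(0)$ is symmetric under $x\mapsto -x$), and commutes with $\widetilde{\mathcal L}$ because $\psi_{\mathrm{even}}$, hence $\cos(\psi_{\mathrm{even}})$, is even. Consequently $R$ maps the (one-dimensional) eigenspace of $\beta$ into itself, so $R\zeta_\beta = \pm\zeta_\beta$; since $\zeta_\beta > 0$ and $R\zeta_\beta > 0$, the sign must be $+$, i.e., $\zeta_\beta$ is even.

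The main obstacle is the delta-interaction at the origin: the lattice inequality $\widetilde{\mathcal Q}_\gamma(|f|)\leqq\widetilde{\mathcal Q}_\gamma(f)$ and the unique continuation/positivity argument must be carried out with care because $\zeta_\beta$ is only $H^2$ on each half-line with a prescribed jump in the derivative at $0$, not globally $H^2$; one has to verify that $|\zeta_\beta|$ still satisfies the correct transmission condition (which it does, since $\zeta_\beta$ not vanishing at $0$ means $|\zeta_\beta|$ locally equals $\pm\zeta_\beta$ near $0$) and that elliptic regularity is applied only on the open half-lines where the potential is smooth. This is routine but is the step where the graph/$\delta$ structure genuinely enters, so I would state it explicitly rather than appeal to the classical line case. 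Alternatively, one can invoke the positivity-improving property of the semigroup $e^{-t\widetilde{\mathcal L}}$ for Schrödinger operators with $\delta$-interactions (a known fact, see the references to Albeverio \emph{et al.} \cite{Albe}), from which simplicity and positivity of the ground state follow immediately; I would mention this as the cleaner route and keep the variational argument as the self-contained backup.
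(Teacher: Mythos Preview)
Your argument is correct and takes a genuinely different route from the paper. The paper proceeds by showing that the resolvent of the free $\delta$-Laplacian $(-\Delta_\gamma + \mu)^{-1}$ has a strictly positive integral kernel for $\mu$ large (computed explicitly via Krein's formula), and then invokes the abstract Perron--Frobenius machinery of Albert--Bona--Henry \cite{ABH87}: a positivity-improving resolvent forces the ground state to be simple and of one sign. Your approach is variational: the Rayleigh quotient is unchanged under $f\mapsto|f|$ on the form domain $H^1(\mathbb R)$ (the $\delta$-boundary term depends only on $|f(0)|^2$), so $|\zeta_\beta|$ is again a minimizer and hence an eigenfunction; ODE unique continuation on each half-line together with the transmission condition at $0$ then yields strict positivity, from which simplicity follows by orthogonality, and evenness by the reflection symmetry $R$.

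Your method is more self-contained (no explicit Green's function needed) and makes the role of the even potential transparent; the paper's method is shorter once one accepts the cited abstract result and fits naturally into the general positivity-improving framework, which is exactly the ``cleaner route'' you mention at the end as an alternative. One cosmetic slip: the boundary term in your quadratic form should be $+c_2^2\gamma|f(0)|^2$ rather than $-\gamma|f(0)|^2$ (integrate by parts on each half-line and use the jump condition); this does not affect the argument since you only use that the term depends on $|f(0)|^2$.
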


\begin{proof} This result follows by a slight twist of standard abstract Perron-Frobenius arguments (see Proposition 2 in Albert \emph{et al.} \cite{ABH87}) applied to Schr\"odinger operators with point interactions. We recommend the reader to see the proof of Theorem 6.7 in Angulo \cite{Angloop}.
\end{proof}

\subsection{Proof of Lemma \ref{exis2}}

\begin{proof}
In the sequel we determine the correct values of $k$ and $Z$ such that $(\phi_{1,k}, \phi_{2, a(k)})\in D_Z$, namely, such that we have  the relation 
$$
2\phi'_1(L)=\phi'_2(L)+Z\phi_2(L),
$$ 
with $\phi_1=\phi_{1,k}$, $\phi_2=\phi_{2, a(k)}$, and $a(k)$ satisfying relation in \eqref{2form-a}. We divide this study into several cases.
 
 \medskip
 
 \noindent $\bullet$ \textbf{Case (1)}. Let $\frac{L}{c_1}> \frac{\pi}{2}$. Consider the unique $k_0\in (0,1)$ such that $K(k_0)=\frac{L}{c_1}$. Then for $k>k_0$ it follows that $K(k)>\frac{L}{c_1}$. Now, from properties of the Jacobi elliptic functions (see \cite{ByFr71}), we obtain from \eqref{2form-a} (the continuity condition at the vertex $\nu=L$) the following limits: 
\begin{equation}
\label{limits}
\begin{aligned}
\lim_{k\to 1} \sech\Big(\frac{a(k)}{c_2}\Big) &= \lim_{k\to 1}\frac{k'}{\dn(L/c_1;k)} =\frac{0}{\sech(L/c_1)}=0,\\
\lim_{k\to k_0} \sech\Big(\frac{a(k)}{c_2}\Big) &=\frac{k'_0}{\dn(K(k_0);k_0)}=\frac{k'_0}{k'_0}=1.
\end{aligned}
\end{equation}
Therefore, $\lim_{k\to 1} a(k)=-\infty$ and $\lim_{k\to k_0} a(k)=0$ ($a(k_0)=0$). Next, we will see that $k\in (k_0, 1) \to a(k)$ is a strictly decreasing mapping. Indeed, from the formula $\sech^{-1}(x) = \ln(\frac{1+\sqrt{1-x^2}}{x})$, $0<x\leqq 1$, and $\dn^2-k'^2=k^2\cn^2$ we get the following relation
\begin{equation}
\label{a(k)}
\frac{a(k)}{c_2}= - \ln\Big( \frac{\dn(L/{c_1}; k)+ k \cn(L/{c_1}; k)}{k'}\Big)\equiv j(k).
 \end{equation} 
Then, numerical simulations always show that  $j(k)$  in \eqref{a(k)} is strictly decreasing  for $k\in (k_0, 1)$ and so $a'(k)<0$, for $k\in (k_0, 1)$  (see Figure \ref{fig12n} for the graph of $j = j(k)$, $k \in (k_0, 1)$ with $c_1=1$, $L=\pi$ and $k_0 \approx 0.984432$).  We note that by using formula (710.53) in \cite{ByFr71} and deriving with regard to $k$ both sides of equality in \eqref{2form-a} we can also see that $a(k)$ is strictly decreasing after long calculations.

\begin{figure}[h]
 \centering
\includegraphics[angle=0,scale=0.55]{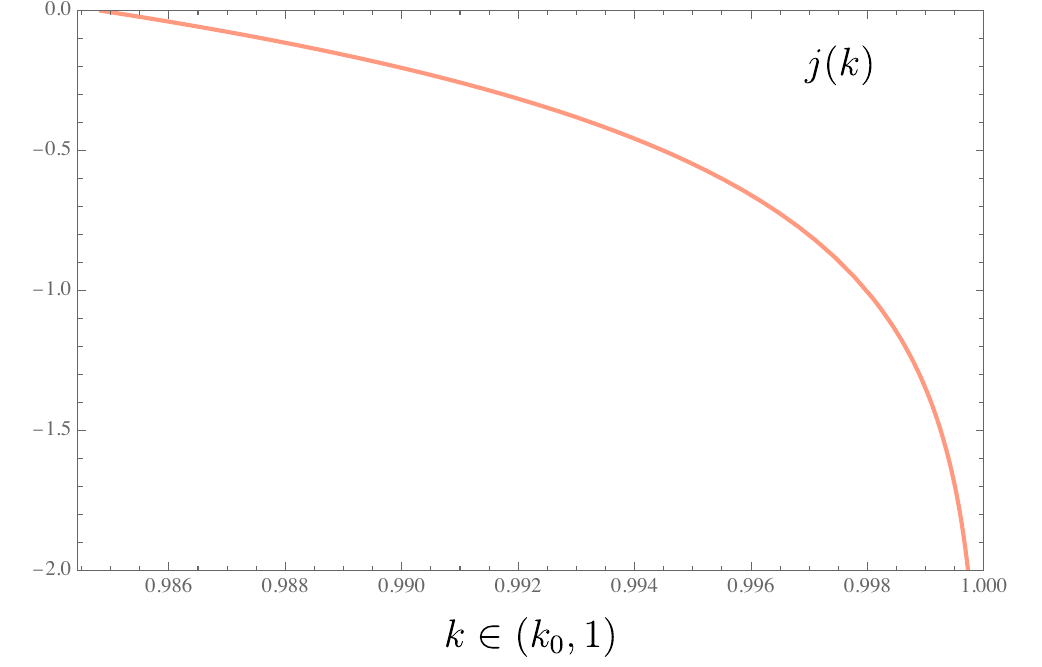}
\caption{\small{Graph of the function $j=j(k)$ in \eqref{a(k)} for $k\in (k_0, 1)$, with $c_1=1$, $L=\pi$ and where $k_0 \approx 0.984432$.} } \label{fig12n}
\end{figure}

Next, from \eqref{GF}, $\sn^2 + \cn^2=1$, $\frac{L}{c_1}+K\in [K, 2K]$ and $\cn(u+K)=-k' \sn(u)/\dn(u)$ we get for $\phi_1=\phi_{1,k}$,
\begin{equation}
\label{form-k1}
\phi'_1(L)=\frac{2k}{c_1}\cn(L/c_1+K;k)=-\frac{2kk' \sn(L/c_1;k)}{c_1\dn(L/c_1;k)}.
\end{equation}
From \eqref{trav22} and \eqref{2form-a} we get for $\phi_2=\phi_{2,a}$,
\begin{equation}
\label{form-k2}
\phi'_2(L)=-\frac{2}{c_2}\sech \Big(\frac{a}{c_2}\Big) =-\frac{2}{c_2}\frac{k'}{\dn(L/c_1;k)}.
\end{equation}
Thus, we obtain from $2\phi'_1(L)=\phi'_2(L)+Z\phi_2(L)$ the following relation 
\begin{equation}
\label{form-k3}
\frac{k'}{c_1\dn(L/c_1;k)}\Big[\frac{c_1}{2c_2}-k\sn(L/c_1;k)\Big]=Z\beta,
\end{equation}
with  $\beta\equiv \arctan(e^{-\frac{a}{c_2}})$. Therefore for $a=a(k)<0$ and $k\in (k_0, 1)$, $Z$ and $k$ are related by the equation
\begin{equation}
\label{form-k4}
Z=\frac{\sech(a/c_2)}{c_1\arctan(e^{-a/c_2})}\Big[\frac{c_1}{2c_2}-k\sn(L/c_1;k)\Big]\equiv H(k).
\end{equation}
Thus, the existence of single-lobe kink profile for the sine-Gordon model on a tadpole is reduced to solve the equation
\begin{equation}
\label{H}
H(k)=Z,
\end{equation}
for $Z\in (-\infty, \frac{2}{\pi c_2})$ fixed, and some $k\in (k_0,1)$ with  $a=a(k)$ solving \eqref{2form-a}. In other words, we will  apply the Intermediate Value Theorem.  Next, we consider the following cases:

\begin{enumerate}
\item[(I)] Let $Z>0$. Then, {\it a priori}, $k$ needs to satisfy $k\sn(L/c_1;k)<\frac{c_1}{2c_2}$ for solving \eqref{H}. Thus,  from basic properties of the Jacobi elliptic functions we get the following  limits
\begin{equation}
\label{$Z=0$}
\begin{aligned}
\lim_{k\to 1}k\sn(L/c_1;k) &= \tanh(L/c_1),\\
\lim_{k\to k_0} k\sn(L/c_1;k) &= k_0 \sn(K(k_0);k_0)=k_0.
\end{aligned}
\end{equation}
Moreover, the mapping 
\begin{equation}
\label{defofg}
k\in (k_0, 1) \to g(k)\equiv k \sn(L/c_1;k),
\end{equation}
is strictly increasing (we note that for $k<k_0$ it is possible to have $g(k)\leqq 0$ and with $g$ having some oscillations; see Figures \ref{fig12} and \ref{fig13}), and so we get  
$$
k_0=k_0 \sn(K(k_0);k_0)< k\sn(L/c_1;k)<\tanh(L/c_1), \;\;\text{for}\;\; k\in (k_0, 1). 
$$

\begin{figure}[h]
 \centering
\includegraphics[angle=0,scale=0.5]{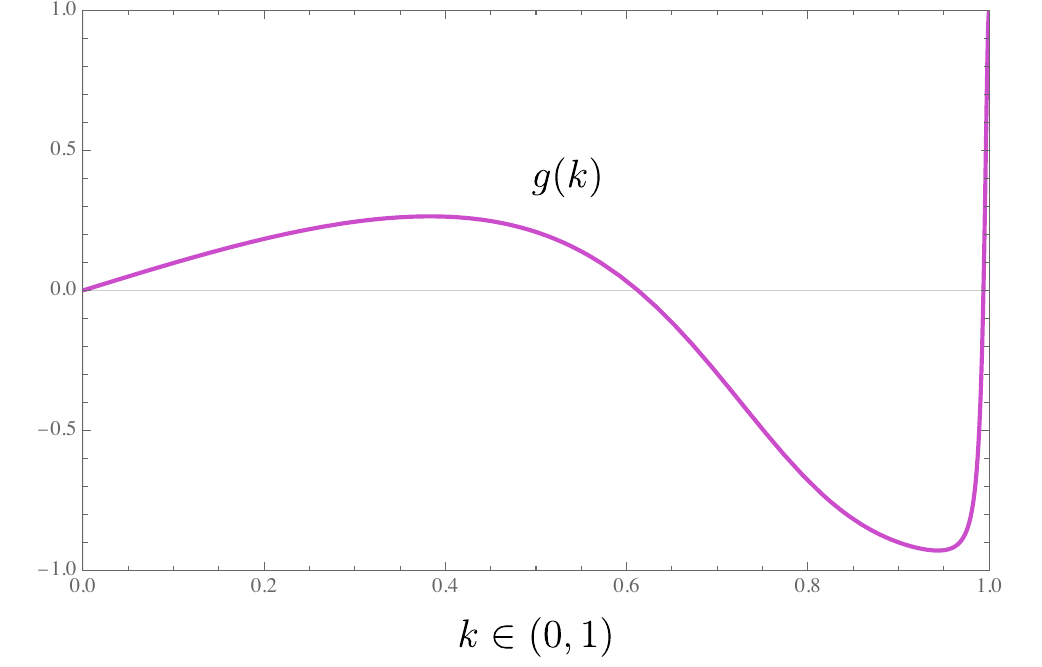}
\caption{\small{Graph of the function $g(k)$ defined on \eqref{defofg} for $k\in (0, 1)$, with $c_1=1$, $L=2.5\pi$.} } \label{fig12}
\end{figure}

\begin{figure}[h]
 \centering
\includegraphics[angle=0,scale=0.5]{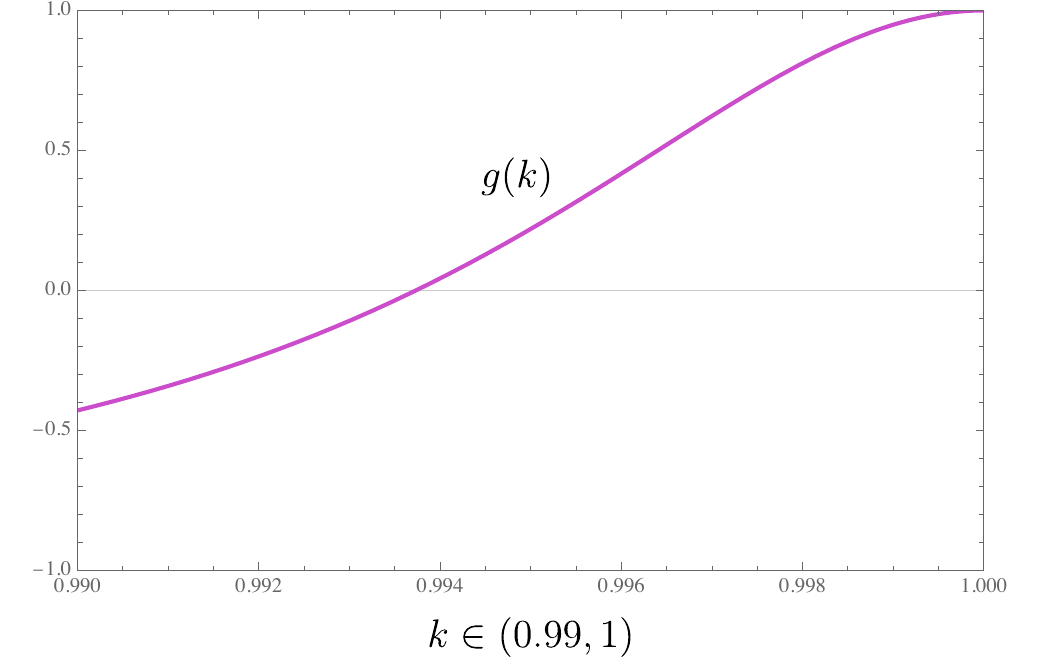}
\caption{\small{Graph of $g(k)$  for $k\in (0.99, 1)$, with $c_1=1$, $L=2.5\pi$ and $k^2_0 \approx 0.999998$.} } \label{fig13}
\end{figure}

Let us consider the following subcases:
\begin{enumerate}
\item[(i)] Suppose  $\frac{c_1}{2c_2}\geqq \tanh(L/c_1)$. Then $H(k)>0$ for $k\in (k_0, 1)$.  Now, from \eqref{limits} and properties of the Jacobi elliptic functions (cf. \cite{ByFr71}), we obtain
\begin{equation}
\label{Hk}
\begin{aligned}
\lim_{k\to 1}H(k) &= \frac{0}{\arctan(\infty)}\Big[\frac{c_1}{2c_2}-\tanh(L/c_1)\Big]=0,\\
 \lim_{k\to k_0} H(k) &=\frac{4}{\pi c_1}\Big[\frac{c_1}{2c_2}-k_0\sn(K(k_0);k_0)\Big]=\frac{4}{\pi c_1}\Big[\frac{c_1}{2c_2}-k_0\Big]>0.
\end{aligned}
\end{equation}
Thus, by analysis above ($a'(k)<0$ and $g'(k)>0$) we can see that  $H'(k)<0$ for $k\in  (k_0, 1)$, and therefore for $Z\in \big(0, \frac{4}{\pi c_1}\big[\frac{c_1}{2c_2}-k_0\big ]\big)$ there is a unique $k_Z\in (k_0, 1)$ with $H(k_Z)=Z$. This shows $(1)(i)$.

\item[(ii)] Suppose $k_0<\frac{c_1}{2c_2}< \tanh(L/c_1)$. Then, there is a unique $\gamma\in (k_0, 1)$ with $g(\gamma)=\frac{c_1}{2c_2}$ and such that for $k\in (k_0, \gamma)$ follows $H(k)>0$. From $ \lim_{k\to \beta} H(k) =0$ and \eqref{Hk}, we have  for $Z\in \big(0, \frac{4}{\pi c_1}\big[\frac{c_1}{2c_2}-k_0\big]\big)$ that there is a unique $k_Z\in (k_0, \beta)$ with $H(k_Z)=Z$. This shows $(1)(ii)$.
\end{enumerate}
\item[(II)] Now let $Z<0$. From \eqref{form-k4} follows that $k$ {\it a priori} needs to satisfy $\frac{c_1}{2c_2}<k\sn(L/c_1;k)<\tanh(L/c_1)$. Let us examine the following subcases:
\begin{enumerate}
\item[(iii)] Suppose $\frac{c_1}{2c_2}<k_0$. Then $g(k)>k_0$ for $k\in (k_0,1)$, and so $H(k)<0$ and $H'(k)>0$ for any $k\in (k_0,1)$. Therefore, for any $Z\in \big( \frac{4}{\pi c_1}\big[\frac{c_1}{2c_2}-k_0\big], 0\big)$ there is a unique $k_Z\in (k_0, 1)$ with $H(k_Z)=Z$. This shows $(1)(iii)$.
\item[(iv)]  Let  $\frac{c_1}{2c_2}= k_0$. Then  $\lim_{k\to k_0} H(k)=0$.  Let $m_0 = \min_{k\in (k_0, 1)} H(k)$, then for $Z\in [m_0, 0)$ there is at least  $k_Z\in (k_0, 1)$ such that $H(k_Z)=Z$. This proves $(1)(iv)$. 
\item[(v)] Next, for $k_0<\frac{c_1}{2c_2}<\tanh(L/c_1)$ we consider $\beta\in (k_0, 1)$ with $g(\beta)=\frac{c_1}{2c_2}$. Then, for every $k\in (\beta, 1)$ follows $g(k)> \frac{c_1}{2c_2}$ and so $H(k)<0$ for $k\in (\beta, 1)$. Since  $ \lim_{k\to \beta} H(k) =0$, we obtain for $Z\in (m_\beta, 0)$, $m_\beta = \min_{k\in (\beta, 1)} H(k)$,  that there is at least  $k_Z\in (\beta, 1)$ such that $H(k_Z)=Z$. This shows $(1)(v)$
\end{enumerate}
\item[(III)] Now suppose that  $Z=0$. From \eqref{form-k4} we need to have $H(k)=0$ and so $\frac{c_1}{2c_2}=k\sn(L/c_1;k)$ is true if and only if   $\frac{c_1}{2c_2}\in (k_0, \tanh(L/c_1))$ and in this case there is a unique $k\in (k_0, 1)$ such that $H(k)=0$. This shows $(1)(vi)$.
\end{enumerate}

\noindent $\bullet$ \textbf{Case (2)}. Now consider the case $\frac{L}{c_1}\leqq  \frac{\pi}{2}$. Then $K(k)> \frac{L}{c_1}$ for all $k\in (0,1)$. Thus, since $\dn(\cdot;0)=1$ we get from \eqref{2form-a} and \eqref{limits} again  $\lim_{k\to 1} a(k)=-\infty$ and $\lim_{k\to 0} a(k)=0$. Moreover,  the mapping $k\in (0,1)\to g(k)= k\sn(L/c_1;k)$ is strictly increasing (see Figure \ref{fig13}) with $k\sn(L/c_1;k)>0$, $\lim_{k\to 1}k\sn(L/c_1;k)=\tanh(\frac{L}{c_1})$ and $\lim_{k\to 0}k\sn(L/c_1;k)=0$.  

\begin{figure}[h]
 \centering
\includegraphics[angle=0,scale=0.5]{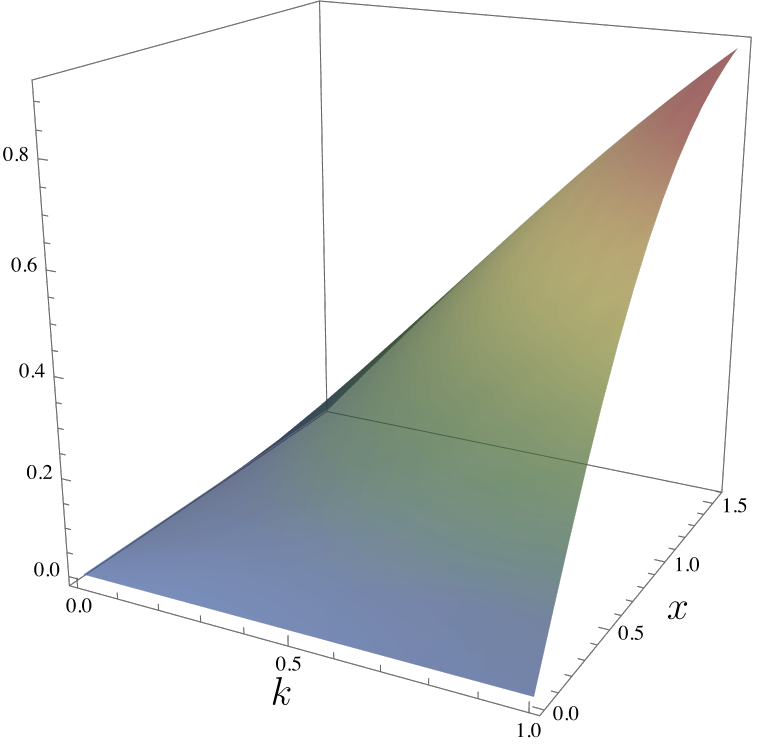}
\caption{\small{Graph of $G(k,x) = k\sn(x;k)$ for $k\in (0, 1)$ and $x\in (0,\frac{\pi}{2})$.}} \label{fig12bis}
\end{figure}

Once again, we split the analysis into subcases:
\begin{enumerate}
\item[(I)] Let $Z>0$. Then we have:
\begin{enumerate}
\item[(i)] For $\frac{c_1}{2c_2}\geqq \tanh(\frac{L}{c_1})> k\sn(L/c_1;k)$ it follows that $H(k)>0$ for $k\in (0,1)$ and $ \lim_{k\to 0} H(k) = \frac{2}{\pi c_2}$, $ \lim_{k\to 1}H(k)=0$,  and $H'(k)<0$ for all $k$. Then,  for  every $Z\in (0, \frac{2}{\pi c_2})$  there is a unique $k_Z\in (0, 1)$ with $H(k_Z)=Z$.
\item[(ii)] For  $\frac{c_1}{2c_2}< \tanh(\frac{L}{c_1})$ consider $\beta\in (0,1)$ with $g(\beta )= \frac{c_1}{2c_2}$. Then, for all $k\in (0, \beta)$ we obtain $H(k)>0$. From  $ \lim_{k\to 0} H(k) = \frac{2}{\pi c_2}$ and  $ \lim_{k\to \beta}H(k)=0$, we obtain for  every $Z\in (0, \frac{2}{\pi c_2})$  that there is a unique $k_Z\in (0, \beta)$ with $H(k_Z)=Z$.
\end{enumerate}
This proves $(2)(i)$.
\item[(II)] Let $Z<0$. For $\frac{c_1}{2c_2}< \tanh(\frac{L}{c_1})$ we consider $\beta\in (0,1)$ with $g(\beta )= \frac{c_1}{2c_2}$, then $H(k)<0$ for all $k\in (\beta,1)$. From  $ \lim_{k\to 1} H(k) = 0$ and  $ \lim_{k\to \beta}H(k)=0$, for  $Z\in (p_\beta, 0)$, $p_\beta = \min_{k\in (\beta, 1)} H(k)$,   there is at least  $k_Z\in (\beta, 1)$ such that $H(k_Z)=Z$. This proves $(2)(ii)$.
\item[(III)] Let $Z=0$. For $\frac{c_1}{2 c_2}< \tanh(\frac{L}{c_1})$, we consider the unique $k\in (0,1)$ with $g(k )= \frac{c_1}{2c_2}$. Then $H(k)=0$. This shows $(2)(iii)$.
\end{enumerate}

\noindent $\bullet$ \textbf{Case (3)}. Suppose $\frac{L}{c_1}>\frac{\pi}{2}$ and we consider  $k_0\in (0,1)$ such that $K(k_0)=\frac{L}{c_1}$.
  \begin{enumerate}
 \item[(i)] If $Z>0$ and  $\frac{c_1}{2c_2}\leqq k_0$ then $ \frac{c_1}{2c_2}-g(k)<0$ and so we do not haven a solution for $H(k)=Z$. Therefore there is not a single-lobe kink state. This proves $(3)(i)$.
 \item[(ii)] If $Z<0$ and $\frac{c_1}{2c_2} \geqq \tanh(\frac{L}{c_1})$ then $\frac{c_1}{2c_2} -g(k)>0$ and  obviously there is not a single-lobe kink state. This shows $(3)(ii)$.
\end{enumerate}

\noindent $\bullet$ \textbf{Case (4)}. Finally, suppose that $\frac{L}{c_1}\leqq \frac{\pi}{2}$. Then we have the following conditions:
 \begin{enumerate}
  \item[(i)]  For $Z<0$ and $\frac{c_1}{2c_2}\geqq  \tanh(\frac{L}{c_1})$ we have $\frac{c_1}{2c_2} -g(k)>0$ and, clearly, there is not a single-lobe kink state. This proves $(4)(i)$
  \item[ii)]  For $Z=0$ and $\frac{c_1}{2c_2}\geqq  \tanh(\frac{L}{c_1})$ there holds $\frac{c_1}{2c_2} -g(k)>0$ and, clearly, there is not a single-lobe kink state. This shows $(4)(ii)$.
\end{enumerate} 

The Lemma is now proved.
\end{proof}

\def\cprime{$'\!\!$} \def\cprimel{$'\!$}

 \end{document}